\documentclass[a4paper,10pt,leqno]{amsart}
\usepackage{amsmath}
\usepackage{amsthm}
\usepackage{amssymb}
\usepackage{stmaryrd}
\usepackage{url}
\usepackage{here}
\usepackage[all]{xy}
\usepackage{comment}
\usepackage{mathtools}
\usepackage{ascmac}
\usepackage{mathrsfs}
\usepackage{bm}
\usepackage{ulem}
\usepackage{framed}
\usepackage{graphicx}
\usepackage{lmodern}
\usepackage{caption}
\usepackage{subcaption}
\usepackage{footnote}
\usepackage{amscd}
\usepackage{mathdots}
\usepackage{subcaption}
\usepackage{float}
\mathtoolsset{showonlyrefs}
\usepackage{comment}
\usepackage{booktabs}
\usepackage{multirow}

\usepackage{xcolor}
\definecolor{webgreen}{rgb}{0,.5,0}
\definecolor{webbrown}{rgb}{.6,0,0}
\definecolor{RoyalBlue}{cmyk}{1, 0.50, 0, 0}
\usepackage[colorlinks=true, breaklinks=true, urlcolor=webbrown, linkcolor=RoyalBlue, citecolor=webgreen, backref=page]{hyperref}

\calclayout
\numberwithin{equation}{section}

\newtheorem{theorem}{Theorem}[section]
\newtheorem{lemma}[theorem]{Lemma}
\newtheorem{proposition}[theorem]{Proposition}
\newtheorem{corollary}[theorem]{Corollary}
\newtheorem{definition}[theorem]{Definition}
\theoremstyle{remark}
\newtheorem{remark}{Remark}

\newcommand{\dd}{\mathrm{d}}

\newcommand{\R}{\mathbb{R}}
\newcommand{\C}{\mathbb{C}}
\newcommand{\Z}{\mathbb{Z}}
\newcommand{\N}{\mathbb{N}}
\newcommand{\TT}{\mathbb{T}}

\newcommand{\ip}[2]{\langle #1, #2 \rangle}

\newcommand{\red}[1]{{\color{red} #1}}
\newcommand{\blue}[1]{{\color{blue} #1}}

\newcommand{\violet}[1]{{\color{violet} #1}}
\newcommand{\hide}[1]{}

\newcommand{\stkout}[1]{\ifmmode\text{\sout{\ensuremath{#1}}}\else\sout{#1}\fi}

\makeatletter
\def\l@subsection{\@tocline{2}{0pt}{2.5pc}{5pc}{}}
\makeatother

\title{Reduction of Multiple Orthogonal Polynomials to Standard Orthogonal Polynomials}

\author[Gharakhloo]{Roozbeh Gharakhloo}
\address{(R. Gharakhloo) Mathematics Department, University of California, Santa Cruz, CA 95064, USA}
\email{roozbeh@ucsc.edu}

\author[Kosmakov]{Maksim Kosmakov}
\address{(M. Kosmakov) Department of Mathematical Sciences, University of Cincinnati, P.O. Box 210025, Cincinnati, OH 45221, USA}
\email{kosmakmm@ucmail.uc.edu}

\author[Miyahara]{Kenta Miyahara}
\address{(K. Miyahara) Department of Mathematical Sciences, 
Indiana University Indianapolis,
402 N. Blackford St., Indianapolis, IN 46202 USA}
\email{kemiya@iu.edu}

\begin{document}

\begin{abstract}
In this article, we derive explicit formulae expressing multiple orthogonal polynomials in terms of standard orthogonal polynomials. We treat both the real-line and unit-circle settings: multiple orthogonal polynomials on the real line (MOPRL) are reduced to orthogonal polynomials on the real line (OPRL), while multiple orthogonal polynomials on the unit circle (MOPUC) are reduced to orthogonal polynomials on the unit circle (OPUC). These formulae also yield corresponding reductions of the Christoffel--Darboux kernels, from the MOPRL kernel to the OPRL kernel and from the MOPUC kernel to the OPUC kernel. In particular, they recover Zinn-Justin's kernel for the external-source random matrix model \cite{ZinnJustin1997} and Baik's kernel reduction formula in the spiked source model \cite{Bai09}. We also apply our general results to concrete examples: in the real-line setting, we obtain an explicit expression for the multiple Hermite Christoffel--Darboux kernel in terms of classical Hermite polynomials, while in the unit-circle setting, we use arc-indicator weights to exhibit
resonance-type zero escape phenomena for type II MOPUC.
\\
\newline
\textit{Keywords}: Multiple orthogonal polynomials on the real line, Multiple orthogonal polynomials on the unit circle, Christoffel-Darboux kernel.
\vspace{.05cm}
\newline
\textit{2020 Mathematics Subject Classification:} Primary 42C05; Secondary 33C45, 33C47, 60B20.
\end{abstract}

%\date{\today}
\maketitle
%\tableofcontents

\setlength{\parskip}{6pt}

%%%%%%%%%%%%%%%%%%%%%%%%%%%%%%%%%%%%%%%%%%%%

\section{Introduction}

Multiple orthogonal polynomials are a natural extension of ordinary orthogonal
polynomials in which a single orthogonality measure is replaced by a finite
collection of measures.  They were originally developed in connection with
Hermite-Padé approximation, analytic number theory, and the theory of special
functions; see, for example, \cite{Apt98,Kui10,VA20} and the references therein.
Given weights \(w^{(1)},\ldots,w^{(r)}\) and a multi-index
\(\vec n=(n_1,\ldots,n_r)\), the type II multiple orthogonal polynomial is the
monic polynomial of degree \(|\vec n|\) satisfying \(n_j\) orthogonality
conditions with respect to the weight \(w^{(j)}\).  The dual type I object is a
 linear combination of the weights \(w^{(1)},\ldots,w^{(r)}\), with
polynomial coefficients, satisfying corresponding moment conditions.  When
\(r=1\), these objects reduce to the usual orthogonal polynomials.

In this paper we derive explicit reduction formulae for multiple orthogonal polynomials on the real line and on the unit circle when $r \geq 2$. These formulae express the type II polynomials, the type I linear forms, and the associated Christoffel--Darboux kernels in terms of usual orthogonal polynomials. As discussed in Section~\ref{section baik}, a closely related reduction at the level of Christoffel--Darboux kernels was obtained by Baik \cite{Bai09} in connection with the finite-rank external-source random matrix model. From this perspective, the present work extends Baik's finite-rank reduction formula from the special external-source multi-index $(n-r,1,\ldots,1)$ to general perfect systems of multiple orthogonality on the real line. It also develops the parallel unit-circle theory. A further feature of our approach is that these reductions are obtained by finite-dimensional linear algebra, without using Riemann--Hilbert machinery.

More precisely, we choose one of the weights \(w^{(1)},\ldots,w^{(r)}\),
and, after relabeling if necessary, denote it by \(w^{(1)}\).  We regard this
weight as the \textit{reference}
%\footnote{\blue{I would suggest we use \textit{reference} weight instead of primary weight. "primary" implies importance and suggest the other weights are secondary in importance.}} 
weight and use the corresponding ordinary orthogonal polynomials as a basis. 
In this basis, the mixed-moment matrix for the multiple orthogonality conditions has a
block upper-triangular structure.  The lower-right block, denoted \(M_{22}\),
contains precisely the information coming from the companion weights
%\footnote{\blue{I would suggest we use \textit{companion} weights instead of perturbing weights; In know this is the case in most examples we have in mind, but regarding the general theory, the remaining weights are not necessarily ``perturbations'' of the reference weight. ``Supplementary'' is still better than ``perturbing'' but still implies some secondary status. So I think ``companion'' is the best.}}
\(w^{(2)},\ldots,w^{(r)}\).  Our main results show that the type II multiple
orthogonal polynomial, the type I linear form, and the corresponding
Christoffel--Darboux kernel can all be written \textit{explicitly} in terms of the
ordinary orthogonal polynomials for \(w^{(1)}\), the ordinary
Christoffel-Darboux kernel, and the inverse of \(M_{22}\). In the finite-rank external-source setting, this recovers Baik's kernel
reduction formula.  The same formalism also recovers Zinn-Justin's earlier
external-source kernel; see
Section~\ref{section baik}.

We also prove analogues of these reduction formulae for multiple orthogonal
polynomials on the unit circle.  Multiple orthogonal polynomials on the unit
circle were introduced by Mínguez Ceniceros and Van Assche \cite{MCVA08}, who
related them to rational approximation of Carathéodory functions,
formulated Riemann--Hilbert problems for the type I and type II objects, and
used this formulation to derive recurrence relations for MOPUC.  Further
structural results, including normality and recurrence relations obtained
without relying on the Riemann--Hilbert formulation, were developed in
\cite{CBDMO15}.  More recently, Kozhan and Vaktnäs established Szegő-type
recurrences for MOPUC \cite{KV24a}.  Our MOPUC
formulae reduce type I and type II MOPUC, and their Christoffel--Darboux kernel,
to ordinary OPUC data plus a finite-sized mixed-moment matrix.  This gives a direct unit-circle counterpart of the real-line reduction.

The results of this paper are intended as a first step in a broader research program. In forthcoming work, we plan to apply these reduction formulae to several problems, including some unexplored aspects of external-source GUE and CUE models; ensembles of non-intersecting random walks on $\mathbb Z$, in both discrete and continuous time, with consecutive initial points and multiple packets of consecutive end points \cite{GL}; and ensembles of non-intersecting Brownian bridges on the circle \cite{BuckinghamLiechty17} with one starting point and multiple ending points.

\subsection{Outline} We now describe the organization of the paper.  Section~\ref{section: prelim} reviews the
preliminaries for MOPRL and MOPUC.  We define type I and type II multiple
orthogonal polynomials, introduce monotone paths of multi-indices, recall the
associated biorthogonality relations, and set up the mixed-moment matrices whose
block structure is the main linear-algebraic input of the paper. In Section~\ref{section main results}
we present the main results.  For MOPRL, Theorems~\ref{typeIIreduction}--\ref{kernel reduction} express the
type II polynomial, the type I linear form, and the Christoffel--Darboux kernel
in terms of ordinary OPRL data. Corollary~\ref{cor-general-Hermite-kernel} specializes the real-line kernel reduction to the multiple Hermite weights for a general multi-index.  Theorems~\ref{thm:typeII}--\ref{thm:kernel} give the corresponding
statements for MOPUC; Corollaries~\ref{cor: MOPUC type II eg 1}--\ref{cor: MOPUC kernel eg 2}
specialize these results to arc-indicator weights and, in particular, exhibit
zero-escape phenomena for type II MOPUC.

Sections~\ref{section proofs} and~\ref{appendix mopuc} contain the proofs of the MOPRL and MOPUC reduction theorems,
respectively.  The proofs are based only on finite-dimensional linear algebra,
the block structure of the mixed-moment matrix, and the biorthogonality of the
type I and type II systems.  Section~\ref{section example MOPUC} contains the proofs of Corollaries~\ref{cor: MOPUC type II eg 1}--\ref{cor: MOPUC kernel eg 2} mentioned above.  Section~\ref{section baik} explains how Baik's reduction formula for
the Christoffel--Darboux kernel of the finite-rank external-source ensemble is
recovered from our general MOPRL theorem.  In the same section, we also provide a derivation of a formula
of Zinn-Justin \cite{ZinnJustin1997} for the external-source ensemble kernel using our formalism.  Appendix~\ref{Mixed-moment and hankel} relates the
mixed-moment matrix formalism to bordered Hankel determinants and to the
standard striped Hankel determinant formula for type II multiple orthogonal
polynomials.

\subsection*{Acknowledgements.} 
This work was initiated during the workshop \textit{``Riemann-Hilbert problems, Toeplitz matrices, and applications''} hosted by the American Institute of Mathematics (AIM) in March 2024. 
The authors are grateful to the AIM and the workshop organizers.
The third author was partially supported by a scholarship from the Japan Student Services Organization and the
Hokushin Scholarship Foundation.
The authors also thank Pavel Bleher, Percy Deift, Vitaly Tarasov, Walter Van Assche and Maxim Yattselev for helpful discussions and remarks. 
%\sout{The authors are also grateful to Professor V. Tarasov for a helpful suggestion that substantially simplified the proof of Theorem~\ref{kernel reduction}.}

\section{Preliminaries} \label{section: prelim}

\subsection{MOPRL preliminaries}

Let $\vec{n} = (n_1, \cdots, n_r) \in \N^r$ be a multi-index of size $n = |\vec{n}| = \sum_{j=1}^r n_j$, and let $\{ \mu_j \}_{j=1}^r$ be a sequence of positive measures on the real line such that all moments $\int_\R x^k \dd \mu_j(x)$ exist.
Furthermore, we assume each $\mu_j$ is absolutely continuous with respect to the Lebesgue measure. Thus, we have a sequence of weight functions $\{w^{(j)}(x)\}_{j=1}^r$ such that
\[
\dd \mu_j = w^{(j)}(x) \dd x, \quad j = 1, 2, \cdots, r.
\]
In this setting, we define two types of multiple orthogonal polynomials on the real line.

\begin{definition} \label{defn of type II}
A type II MOPRL $P_{\vec{n}}(x)$ is a monic polynomial of degree $n$ satisfying
\begin{equation} \label{type II orthogonality}
	\int_{\R} P_{\vec{n}}(x) x^k w^{(j)}(x) \dd x = 0, \quad k=0, \cdots, n_j-1, \quad j=1, \cdots, r.
\end{equation}
\end{definition}

\begin{definition} \label{defn of type I}
Type I MOPRL are a nonzero vector of polynomials $(A_1(x), \cdots, A_r(x))$ where each $A_j(x)$ has degree at most $n_j-1$ and the linear form 
\[
Q_{\vec{n}}(x) = \sum_{j=1}^r A_j(x) w^{(j)}(x)
\]
satisfies
\begin{equation} \label{type I orthogonality 1}
\int_{\R} Q_{\vec{n}}(x) x^k \dd x = 
\begin{dcases}
    0, & k = 0, \cdots, n-2,\\
    1, & k = n-1.
\end{dcases}
\end{equation}
\end{definition}

Condition \eqref{type I orthogonality 1} gives a linear system of $n$ equations for the $n$ unknown coefficients of $A_j(x)$, $j=1, \cdots, r$.
Thus, the existence and uniqueness of type I MOPRL depend on the non-singularity of this linear system. 
In other words, if the moment matrix 
\[
m = \begin{pmatrix}
    m^{(1)} & m^{(2)} & \cdots & m^{(r)}
\end{pmatrix}
\]
where each $m^{(j)}$ is an $n \times n_j$ Hankel matrix whose $(a,b)$-entry is given by $\int_\R x^{a + b - 2} w^{(j)}(x) \dd x$, has a nonzero determinant, then type I MOPRL uniquely exists. This condition also guarantees the existence and uniqueness of type II MOPRL.

\begin{definition}
Given weights $\{w^{(j)}(x)\}_{j=1}^r$, if for every multi-index $\vec{n}\in\mathbb{N}^r$, the associated type I and II MOPRL uniquely exist, $\{w^{(j)}(x)\}_{j=1}^r$ is called a perfect system.
\end{definition}

Throughout this article, we assume that the underlying system of weights is perfect, so that MOPRL exists uniquely for every multi-index. This condition is satisfied, for example, by Angelesco systems, AT systems, and Nikishin systems, see for example \cite{VA20}. Also, in this article, we refer to \(w^{(1)}(x)\) as the reference weight and to the remaining weights as the companion weights.
For the reference weight $w^{(1)}(x)$, we write the corresponding monic orthogonal polynomials by $\{p_k(x)\}_{k \geq 0}$, where they satisfy
\begin{equation}\label{orthogonality w1}
	\int_\R p_n(x)p_k(x) w^{(1)}(x) \dd x = h_n \delta_{nk}, \quad k,n \in \Z_{\geq 0}.
\end{equation}
We refer them to the standard orthogonal polynomials on the real line.
Also, we define the Christoffel-Darboux kernel for this OPRL by
\begin{align} \label{defn of CD kernel for p}
K_0(x,y)=\sum_{j=0}^{n-1}\frac{p_j(x)p_j(y)}{h_j}w^{(1)}(y).
\end{align}

%%%%%%%%%%%%%%%%%%%%%%%%%%%%%%%%%%%%%%

A \textit{monotonic path} is a sequence of multi-indices $\{\vec{n}_j\}_{j=0}^n \subset \N^r$ from $\vec{n}_0 = \vec{0}$ (all zero vector) to $\vec{n}_n = \vec{n}$ such that $\vec{n}_{j+1}$ is obtained from $\vec{n}_j$ by increasing exactly one component by $1$. 
In other words, for each $j=0, 1, \cdots, n-1$, the path must satisfy
\begin{equation}
	|\vec{n}_j| = j, \quad \text{and} \quad \vec{n}_{j+1} \geq \vec{n}_j,
\end{equation}
where the inequality is taken component-wise. 
A \textit{natural path} is an example of the monotonic path, where the components of the multi-index are filled in sequential order. 
That is, the path first increments the first component from $0$ to $n_1$, then increments the second component from $0$ to $n_2$, and so on, until the final index $\vec{n}_n = (n_1, \cdots, n_r)$ is reached. In particular, the natural path is unique for a fixed multi-index $\vec{n}$.

For a fixed monotonic path $\{\vec{n}_j\}_{j=0}^n \subset \N^r$, we define the associated sequences of type II MOPRL and type I linear forms by
\begin{equation} \label{defn of MOPRL systems}
	P_j(x) := P_{\vec{n}_j}(x) \quad \text{and} \quad Q_j(x) := Q_{\vec{n}_{j+1}}(x).
\end{equation}
These sequences are the building blocks for the Christoffel-Darboux kernel for MOPRL,
\begin{equation}\label{defn of CD kernel}
	K(x,y) = \sum_{j=0}^{n-1} P_j(x)Q_j(y).
\end{equation}
A crucial result, shown by Daems and Kuijlaars (see \cite[Theorem 1.1, Remark 1.2]{DaemsKuijlaars2004}), is that this sum is independent of the specific monotonic path chosen to connect $\vec{0}$ and $\vec{n}_n$.
This path independence allows us to select the most convenient monotonic path, i.e., the natural path.
Therefore, whenever we mention $P_j(x)$ and $Q_j(x)$ defined in \eqref{defn of MOPRL systems} throughout this paper, we assume that the multi-index $\vec{n}$ is fixed and the sequences $P_j(x)$ and $Q_j(x)$ are understood to be defined along the natural path from $\vec 0$ to $\vec n$.

%That is, the path first increments the component corresponding to the weight $w^{(1)}(x)$ from $0$ to $n_1$, then increments the component for $w^{(2)}(x)$ from $0$ to $n_2$, and so on, until the final index $\vec{n}_n = (n_1, \cdots, n_r)$ is reached. 

%One can readily show that sequences of multiple orthogonal polynomials satisfy the bi-orthogonality, see \cite[Lemma 3.1]{DaemsKuijlaars2004} and \cite[Theorem 23.1.6]{Ismail}, and a simple degeneration if the given multi-index stops at $\vec{n}_n=(n,0,\cdots,0)$. For later use, we formulate them as the following lemmas.

Another important fact about \eqref{defn of MOPRL systems} is that they satisfy the bi-orthogonality, see \cite[Lemma 3.1]{DaemsKuijlaars2004} and \cite[Theorem 23.1.6]{Ismail}. For later use, we formulate it as the following lemma.

\begin{lemma} \label{biorthogonality}
%For a monotonic path of multi-indices, the associated type II MOPRL and type I linear forms satisfy the bi-orthogonality relation:
One has
\begin{equation}\label{biorth MOPRL}
	\int_{\R} P_k(x) Q_m(x) \dd x = \delta_{km}, \quad \forall k,m \in \mathbb{Z}_{\geq 0}.
\end{equation}
\end{lemma}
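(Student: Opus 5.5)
The plan is to prove the relation \(\int_\R P_k Q_m\,\dd x=\delta_{km}\) for \(0\le k,m\le n-1\) along the fixed monotonic (natural) path. We split into three cases according to the sign of \(k-m\), using only the defining orthogonality relations \eqref{type II orthogonality} and \eqref{type I orthogonality 1} together with the monotonicity \(\vec n_{j+1}\ge \vec n_j\). Throughout we write \(\vec n_j=(n_{j,1},\dots,n_{j,r})\) for the components of \(\vec n_j\).

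\emph{Case \(k<m\).} Here \(Q_m=Q_{\vec n_{m+1}}\), and \(|\vec n_{m+1}|=m+1\). By \eqref{type I orthogonality 1}, \(Q_m\) annihilates every polynomial of degree at most \(m-1\). Since \(P_k\) has degree \(k\le m-1\), we get \(\int_\R P_kQ_m\,\dd x=0\).

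\emph{Case \(k=m\).} Write \(P_m(x)=x^m+(\text{lower order terms})\). The lower terms are killed by \(Q_m\), again by \eqref{type I orthogonality 1}. The leading term contributes \(\int_\R x^mQ_{\vec n_{m+1}}(x)\,\dd x=1\), since \(m=|\vec n_{m+1}|-1\). Hence the integral equals \(1\).

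\emph{Case \(k>m\).} This is the case that genuinely uses the path structure, and it is the only step needing care. Expand
\[
Q_m=\sum_{j=1}^r A_j w^{(j)},\qquad \deg A_j\le n_{m+1,j}-1 .
\]
Then
\[
\int_\R P_kQ_m\,\dd x=\sum_{j=1}^r\int_\R P_k(x)A_j(x)w^{(j)}(x)\,\dd x .
\]
Since \(k\ge m+1\), monotonicity of the path gives \(\vec n_k\ge\vec n_{m+1}\) componentwise, so \(n_{k,j}\ge n_{m+1,j}\) for every \(j\). By \eqref{type II orthogonality}, \(P_k=P_{\vec n_k}\) is orthogonal to \(x^\ell w^{(j)}\) for all \(\ell\le n_{k,j}-1\). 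This range contains \(\deg A_j\le n_{m+1,j}-1\), so each summand vanishes.

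Existence and uniqueness of all \(P_j\) and \(Q_j\) is guaranteed by the standing perfectness assumption, so the objects above are well defined. The statement then follows by combining the three cases. The only point I expect to require attention is the index bookkeeping in the last case, namely the shift \(Q_m=Q_{\vec n_{m+1}}\) and the resulting comparison \(\vec n_k\ge \vec n_{m+1}\).
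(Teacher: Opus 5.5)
Your proof is correct, and it is the standard argument. The paper does not prove this lemma itself; it cites \cite[Lemma 3.1]{DaemsKuijlaars2004} and \cite[Theorem 23.1.6]{Ismail}, which use the same split: for $k\le m$, type I orthogonality of $Q_{\vec n_{m+1}}$ against degree $\le m-1$ plus the normalization at $x^m$; for $k>m$, type II orthogonality of $P_{\vec n_k}$ combined with $\vec n_k\ge\vec n_{m+1}$. Your third case also covers $k=n$, so the relation holds for every index at which $P_k$ and $Q_m$ are defined along the path.
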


\begin{comment}
\begin{lemma} \label{degeneration}
For the special case $\vec{n}_n =(n,0,\cdots,0)$, one has
\begin{equation}
     P_{k}(x) = p_k(x),\quad Q_{k}(x)= \frac{p_{k}(x) }{h_{k}}w^{(1)} (x), \quad k = 0, \cdots, n.
\end{equation}
\end{lemma}
\end{comment}

%%%%%%%%%%%%%%%%%%%%%%%%%%%%%%%%%%%%%%%%%%%%%%

%\subsection{Key Objects and Their Block Structure.}

Before formulating our main results, we introduce some notations. 
First, we denote the space of polynomials on the real line with real coefficients of degree at most $n$ by $\mathcal{P}_n^{\R} = \mathrm{span}\{1, x, \cdots, x^n\}$.
For us, the most interesting basis is the set of orthogonal polynomials $\{p_k(x)\}_{k=0}^n$ associated with $w^{(1)}$.
Let $\bm{p}(x)$ be the $n$-column vector of \textit{basis polynomials},
\begin{equation} \label{defn of pi}
    \bm{p}(x):=(p_0(x),\cdots,p_{n-1}(x))^\top,
\end{equation}
$\bm{q}(x)$ be the $n$-column vector of \textit{weighted basis polynomials},
\begin{multline} \label{defn of phi}
    \bm{q}(x) := \bigg( p_0(x) w^{(1)}(x), \cdots, p_{n_1-1}(x) w^{(1)}(x),\\ p_0(x) w^{(2)}(x), \cdots, p_{n_2-1}(x) w^{(2)}(x), \boldsymbol{\cdots}, p_0 (x) w^{(r)}(x), \cdots, p_{n_r-1}(x) w^{(r)}(x)\bigg)^\top,
\end{multline}
and $\bm{v}$ be the $n$-column vector of \textit{mixed-moments of $p_n(x)$},
\begin{equation} \label{defn of v}
    \bm{v} := \int_{\R} p_n(x) \bm{q}(x) dx.
\end{equation}
By \eqref{orthogonality w1}, one can immediately observe that the first $n_1$ components of $\bm{v}$ are zero.

Next, we construct the $n \times n$ \textit{mixed-moment matrix} $\bm{M}$ associated with the basis $p_m(x)$ and the weighted family $p_k (x) w^{(j)}(x)$,
\begin{equation} \label{defn of M}
\bm{M} = \int_{\R} \bm{p}(s) \,\bm{q}(s)^\top \, \dd s.
\end{equation}
For $k = 1, \cdots, r$, if we write a $n \times n_k$ matrix $M^{(k)}$ whose $(a,b)$-entry is given by
\[
M^{(k)}_{a, b} = \int_{\R} p_{a-1}(x) p_{b-1}(x) w^{(k)}(x) \dd x,
\]
one can readily see that $\bm{M}$ is of the form,
\begin{align} \label{M=(M1 M2 ... Mr)}
\bm{M} = \begin{pmatrix}
    M^{(1)} & M^{(2)} & \cdots & M^{(r)}
\end{pmatrix}.
\end{align}
We shall also define a $n \times n$ matrix of the norms of $p_k(x)$,
\begin{equation} \label{defn of H}
    \bm{H} = \operatorname{diag}(h_0,h_1,\ldots, h_{n-1}).
\end{equation}

For later use, it is convenient to partition these objects according to the split
between the reference system, consisting of the first \(n_1\) components, and the
companion system, consisting of the remaining \(n-n_1\) components. We use the
block decompositions
\begin{equation} \label{defn of pi phi v, 1-2}
\bm{p}(x) =
\begin{pmatrix}
\bm{p}_1(x) \\
\bm{p}_2(x)
\end{pmatrix},
\qquad
\bm{q}(x) =
\begin{pmatrix}
\bm{q}_1(x) \\
\bm{q}_2(x)
\end{pmatrix},
\qquad
\bm{v} =
\begin{pmatrix}
\bm{0} \\
\bm{v}_2
\end{pmatrix},
\end{equation}
and
\begin{equation} \label{defn of M, H 4 blocks}
\bm{M} =
\begin{pmatrix}
\bm{M}_{11} & \bm{M}_{12} \\
\bm{M}_{21} & \bm{M}_{22}
\end{pmatrix},
\qquad
\bm{H} = \operatorname{diag}(\bm{H}_1,\bm{H}_2).
\end{equation}

We note that the top-left $n_1 \times n_1$ block matrix $\bm{M}_{11}$ is, in fact, $\bm{H}_1$, and the bottom-left $(n-n_1) \times n_1$ block matrix $\bm{M}_{21}$ is the zero matrix $\bm{0}$; by the orthogonality condition \eqref{orthogonality w1}. The right blocks, $\bm{M}_{12}$ and $\bm{M}_{22}$, involve integrals against the companion weights $w^{(j)}(x)$ for $2 \leq j \leq r$, and cannot be simplified immediately. Thus, $\bm{M}$ has the block upper-triangular structure,
\begin{equation} \label{M block structure}
    \bm{M}  = \begin{pmatrix}\bm{H}_1 & \bm{M}_{12}  \\ \bm{0} & \bm{M}_{22}  \end{pmatrix}.
\end{equation}

\begin{remark}
    Observe that using this notation the  Christoffel-Darboux kernel for  OPRL \eqref{defn of CD kernel for p}  can be written as 
     \begin{equation}\label{K0 block}
    K_0(x,y)
    =\bm q_1(y)^\top\bm H_1^{-1}\bm p_1(x)
     +w^{(1)}(y)\bm p_2(y)^\top\bm H_2^{-1}\bm p_2(x).
\end{equation}
\end{remark}

%%%%%%%%%%%%%%%%%%%%%%%%%%%%%%%%%%%%%%%%%%%%%%

\subsection{MOPUC preliminaries}
%\footnote{\red{Subsection titles could be improved, I would say, let us combine subsections 2.1, 2.2, and 2.3 into one subsection and call it ``MOPRL preliminaries'', and call the subsection on MOPUC as ``MOPUC preliminaries''}}

As we present the results for multiple orthogonal polynomials on the unit circle (MOPUC) later, we shall give a similar preparation now for MOPUC as we did for MOPRL in the above three subsections. 
We start by reviewing a few facts about the orthogonal polynomials on the unit circle (OPUC).
Let $\nu_1$ be a positive measure on the unit circle $\TT$ such that all moments exist. With an inner product,
\begin{equation}
\ip{f(z)}{g(z)} = \int_{\TT} f(z) \overline{g(z)} \, \dd \nu_1,
\end{equation}
$L^2(\nu_1)$ is a Hilbert space.
Let $\mathcal{P}^{\TT}_n = \mathrm{span} \{1, z, \cdots, z^n\}$ be the space of polynomials with complex coefficients and of degree at most $n$. Clearly, $\mathcal{P}^{\TT}_n$ is a subspace of $L^2(\nu_1)$. 
Applying the Gram-Schmidt process to the standard basis $\{z^n\}_{n \geq 0}$ of $\mathcal{P}^{\TT}_n$, one obtains the \textit{monic} orthogonal basis $\{ \varphi_n(z) \}_{n \geq 0}$ that satisfy
\begin{equation} \label{opuc orthogonality}
\int_{\TT} \varphi_n(z) \, \overline{\varphi_m(z)} \, \dd \nu_1 = h_n \delta_{nm}, \quad h_n > 0,
\end{equation}
which are called the OPUC.
Assuming $\nu_1$ is absolutely continuous with respect to the normalized arclength measure, i.e.,
\[
\dd \nu_1 (z) = \omega^{(1)}(z) \frac{|\dd z|}{2\pi},
\]
with a nonnegative weight function $\omega^{(1)}(z)$ defined on $\TT$, we define the Christoffel-Darboux kernel for OPUC by
\begin{equation}\label{eq:Szego-kernel}
\mathcal K_0(z,\xi) = \sum_{j=0}^{n-1} \frac{\varphi_j(z) \, \overline{\varphi_j(\xi)}}{h_j} \, \omega^{(1)}(\xi).
\end{equation}

As with the generalization of OPRL to MOPRL, we consider the multiple orthogonal polynomials on the unit circle (MOPUC) following \cite{MCVA08, CBDMO15, KV24a}, and derive reduction formulae of MOPUC to OPUC.
To this end, we introduce a few notations and definitions here.
Let $\vec{n} = (n_1, n_2, \cdots, n_r) \in \N^r$ be a multi-index of size $n=|\vec{n}|=\sum_{j=1}^r n_j$.
Define measures $\nu_j$ on $\TT$ for $j = 1, \cdots, r$ such that all moments exist, and we assume each $\nu_j$ is absolutely continuous with respect to the normalized arclength measure,
\begin{align} \label{nuj and wj}
\dd \nu_j(z) = \omega^{(j)}(z) \frac{|\dd z|}{2\pi}
\end{align}
where each $\omega^{(j)}(z)$ is a nonnegative\footnote{Following \cite{MCVA08} and the subsequent studies on MOPUC \cite{CBDMO15, KV24a, MOPUC4, MOPUC5, MCVA08}, we restrict our attention in this work to nonnegative measures, although the theory can also be developed for complex measures.} weight function defined on $\TT$.
Now, we are ready to define the type I and II MOPUC. 

\begin{definition}\label{def:typeII MOPUC}
A type~II MOPUC $\Phi_{\vec{n}}(z)$ is a monic polynomial of degree $n$ such that
\begin{equation} \label{eq:typeII MOPUC-orth}
\int_{\TT} \Phi_{\vec{n}}(z) \, \overline{z^k} \, \omega^{(j)}(z) \frac{|\dd z|}{2\pi} = 0,
\quad k=0,1,\cdots,n_j-1, \quad j=1,2,\cdots,r.
\end{equation}
\end{definition}

\begin{definition}\label{def:typeI MOPUC}
Type~I MOPUC are a nonzero vector of polynomials $(\Lambda_{1}, \cdots, \Lambda_{r})$ with $\deg \Lambda_{j} \leq n_j - 1$ 
\begin{comment}
\red{that satisfies
\begin{align}
    \sum_{j = 1}^{r} \ip{\Lambda_{\vec{n},j}}{z^k}_j = \delta_{k,n-1}, \quad k = 0, 1, \cdots, n-1.
\end{align}
}
\end{comment}
such that the linear form
\begin{equation} \label{eq:typeIMOPC-linearform}
\Psi_{\vec{n}}(z) = \sum_{j=1}^r \Lambda_{j}(z) \,  \omega^{(j)}(z),
\end{equation}
satisfies
\begin{align}
\int_{\TT} \Psi_{\vec{n}}(z) \, \overline{z^k} \, \frac{|\dd z|}{2\pi} = \begin{dcases}
    0, & k = 0, \cdots, n-2,\\
    1, & k = n-1.
\end{dcases}\label{eq:typeI MOPC-orth}
\end{align}
\end{definition}

\begin{definition} \label{def:normalperfect}
Given weights $\{\omega^{(j)}(z)\}_{j=1}^r$, if $\Phi_{\vec{n}}(z)$ and $\Psi_{\vec{n}}(z)$ uniquely exist for every multi-index $\vec{n} \in \N^r$, then the system is called perfect.
\end{definition}

\begin{remark} \label{rem 1}
Replacing $\{z^k\}_{0\le k\le n_j-1}$ by any invertible lower-triangular change of basis (for example, $\{\varphi_k(z)\}_{0 \leq k\le n_j-1}$) in \eqref{eq:typeII MOPUC-orth} and \eqref{eq:typeI MOPC-orth} leaves the linear systems defining $\Phi_{\vec{n}}(z)$ and $\Psi_{\vec{n}}(z)$ unchanged.
\end{remark}

A monotonic path is a sequence of multi-indices $\{\vec{n}_j\}_{j=0}^n$ from $\vec{n}_0=\vec{0}$ to $\vec{n}_n = \vec{n}$ such that $|\vec{n}_j| = j$ and $\vec{n}_{j+1} \geq \vec{n}_j$.
Along a fixed path, we define the associated sequences of type II polynomials and type I linear forms as
\begin{equation}\label{eq:path-PQ}
\Phi_j(z) := \Phi_{\vec{n}_j}(z), \quad \Psi_j(z) := \Psi_{\vec{n}_{j+1}}(z), \quad j=0,1,\dots,n-1.
\end{equation}
We then define the Christoffel-Darboux kernel for MOPUC by
\begin{equation}\label{eq:Kn-def}
\mathcal K(z,\xi)=\sum_{j=0}^{n-1} \Phi_j(z) \, \overline{\Psi_j(\xi)}.
\end{equation}
\begin{remark}
This Christoffel-Darboux kernel is slightly different from the one considered in \cite[Theorem 6.1]{KV24a}. Kozhan-Vaktn\"as did not assume the absolute continuity for each $\nu_j$ with respect to the arclength measure, so it was natural for them to deal with the vector of type I polynomials $\Lambda_{\vec{n}} = (\Lambda_{1}, \cdots, \Lambda_{r})$, which results in to have the Christoffel-Darboux kernel as a vector form. However, Kozhan-Vaktn\"as's and ours are essentially the same, and we understand the kernel defined in \eqref{eq:Kn-def}  as an analogue of \eqref{defn of CD kernel}, where we already assumed the absolute continuity of the measures on the real line with respect to Lebesgue measure.
\end{remark}

Similarly to \cite[Theorem 6.1]{KV24a}, one can readily show that $\mathcal K(z, \xi)$ is independent of the specific monotonic path chosen to connect $\vec{0}$ and $\vec{n}_n$. Thus, we use the natural path hereafter.
For this reason, whenever we mention $\Phi_j(z)$ and $\Psi_j(z)$ defined in \eqref{eq:path-PQ} for the rest of the paper, we assume that the multi-index $\vec{n}$ is fixed and the sequences $\Phi_j(x)$ and $\Psi_j(x)$ are understood to be defined along the natural path from $\vec 0$ to $\vec n$. 

As with Lemma~\ref{biorthogonality}, a similar claim holds for MOPUC sequences, see \cite[Lemma 4.1]{KV24a}.

\begin{lemma} \label{lem: orth + degeneration}
The bi-orthogonality condition,
\begin{equation}\label{eq:biorth}
\int_{\TT} \Phi_k(z) \, \overline{\Psi_{m}(z)} \, \frac{|\dd z|}{2\pi} = \delta_{km},
\end{equation}
\begin{comment}
\[
\red{\sum_{j = 1}^r \ip{\Phi_{\vec{n}_k}}{\Lambda_{\vec{n}_{m+1}, j}}_j = \delta_{km}},
\]
\end{comment}
holds for any $k,m \in \Z_{\geq 0}$. 
\begin{comment}
For the special case $\vec{n}_n =(n,0,\cdots,0)$, MOPUC reduces to standard OPUC,
\begin{equation}
     \Phi_{k}(x) = \varphi_k(x),\quad \Psi_{k}(x)= \frac{\varphi_{k}(x)}{h_{k}} \omega^{(1)}(x), \quad k = 0, \cdots, n.
\end{equation} 
\end{comment}
\end{lemma}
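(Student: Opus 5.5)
We prove the bi-orthogonality \eqref{eq:biorth} directly from the defining orthogonality conditions of Definitions~\ref{def:typeII MOPUC} and~\ref{def:typeI MOPUC}, together with the monotonicity of the natural path. The argument parallels the real-line case of Lemma~\ref{biorthogonality}. Fix $k,m \in \Z_{\geq 0}$. Recall that $\Phi_k = \Phi_{\vec n_k}$ is monic of degree $k$, and that $\Psi_m = \Psi_{\vec n_{m+1}} = \sum_j \Lambda_j \omega^{(j)}$ with $\deg \Lambda_j \le (\vec n_{m+1})_j - 1$. We split into the cases $k<m$, $k=m$, and $k>m$.

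\textbf{The cases $k \le m$.} Expand $\Phi_k(z) = z^k + \sum_{l<k} c_l z^l$. Since $\overline{\Psi_m}$ contains conjugated weights, we take the complex conjugate of the integral, which is harmless because the claimed values $\delta_{km}$ are real:
\[
\overline{\int_\TT \Phi_k \overline{\Psi_m}\,\tfrac{|\dd z|}{2\pi}} = \int_\TT \Psi_m \overline{\Phi_k}\,\tfrac{|\dd z|}{2\pi} = \sum_{l\le k} \overline{c_l}\int_\TT \Psi_m \overline{z^l}\,\tfrac{|\dd z|}{2\pi}, \qquad c_k=1.
\]
By \eqref{eq:typeI MOPC-orth}, applied with $n$ replaced by $|\vec n_{m+1}| = m+1$, each integral $\int_\TT \Psi_m \overline{z^l}\,\tfrac{|\dd z|}{2\pi}$ vanishes for $l \le m-1$ and equals $1$ for $l = m$. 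If $k<m$, every index satisfies $l \le k \le m-1$, so the sum is $0$. If $k=m$, only the top term $l=m$ survives, with coefficient $\overline{c_m} = 1$, so the sum is $1$. Taking conjugates back gives $0$ and $1$ respectively.

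\textbf{The case $k>m$.} Write
\[
\int_\TT \Phi_k \overline{\Psi_m}\,\tfrac{|\dd z|}{2\pi} = \sum_j \int_\TT \Phi_k\, \overline{\Lambda_j}\, \omega^{(j)}\,\tfrac{|\dd z|}{2\pi},
\]
using that each $\omega^{(j)}$ is real (indeed nonnegative). Since the path is monotone and $m+1 \le k$, we have $\vec n_{m+1} \le \vec n_k$ componentwise. Hence $\deg \Lambda_j \le (\vec n_{m+1})_j - 1 \le (\vec n_k)_j - 1$. Expanding $\overline{\Lambda_j}$ in the $\overline{z^l}$ with $l \le (\vec n_k)_j - 1$, each term vanishes by the type II conditions \eqref{eq:typeII MOPUC-orth} for $\Phi_{\vec n_k}$. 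So the integral is $0$.

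\textbf{Expected obstacles.} The only delicate points are bookkeeping ones. First, the placement of complex conjugates: the weights $\omega^{(j)}$ must be real, which is why the nonnegativity assumption matters. Second, the index shift $\Psi_m = \Psi_{\vec n_{m+1}}$, which is what produces the degree threshold $m$ in the type I conditions. Third, existence and uniqueness of all objects along the path, which is guaranteed by the perfectness assumption of Definition~\ref{def:normalperfect}. The monotone-path comparison $\vec n_{m+1}\le \vec n_k$ in the case $k>m$ is the substantive step.
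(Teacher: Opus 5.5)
Your proof is correct: the split into $k\le m$ (type I conditions for $\Psi_{\vec n_{m+1}}$ tested against the monic $\Phi_{\vec n_k}$) and $k>m$ (type II conditions for $\Phi_{\vec n_k}$, using $\vec n_{m+1}\le\vec n_k$ and the reality of the weights) is the standard argument. The paper does not prove this lemma itself; it cites \cite[Lemma 4.1]{KV24a}, whose proof proceeds the same way, as does the real-line analogue in \cite[Lemma 3.1]{DaemsKuijlaars2004}.
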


%%%%%%%%%%%%%%%%%%%%%%%%%%%%%%%%%%%%%%%%%%%%%%

As a final preparation for Theorems~\ref{thm:typeII}, \ref{thm:typeI}, and \ref{thm:kernel}, we introduce some notations here.
Let $\bm{\varphi}(z)$ be the $n$-column vector of standard basis polynomials,
\begin{equation}\label{eq:pi-primary}
\boldsymbol{\varphi}(z) = \big( \varphi_0(z),\varphi_1(z),\cdots,\varphi_{n-1}(z) \big)^\top,
\end{equation}
$\boldsymbol{\psi}(z)$ be the $n$-column vector of weighted basis polynomials,
\begin{multline} \label{eq:chi-stacked}
    \boldsymbol{\psi}(z) := \bigg( \varphi_0(z) \omega^{(1)}(z), \cdots, \varphi_{n_1-1}(z) \omega^{(1)}(z),\\ \varphi_0(z) \omega^{(2)}(z), \cdots, \varphi_{n_2-1}(z) \omega^{(2)}(z), \boldsymbol{\cdots}, \varphi_0 (z) \omega^{(r)}(z), \cdots, \varphi_{n_r-1}(z) \omega^{(r)}(z)\bigg)^\top,
\end{multline}
and $\boldsymbol{u}$ be the $n$-column vector of mixed-moments of $\varphi_n(z)$,
\begin{equation}\label{eq:v-def}
\boldsymbol{u} = \int_{\TT} \varphi_n(z) \,\overline{\boldsymbol{\psi}(z)}\, \frac{|\dd z|}{2\pi},
\end{equation}
where one can readily observe that the first $n_1$ components are zero by \eqref{opuc orthogonality}.
Then, we define the $n \times n$ mixed-moment matrix as with the MOPRL case%\footnote{\red{It is better to use $\boldsymbol{\mathcal{M}}$ here to be consistent with $\boldsymbol{M}$ for OPRL.} \textcolor{purple}{$\to$ Edited as requested.}},
\begin{equation}\label{eq:M-def}
\bm{\mathcal{M}} = \int_{\TT} \bm{\varphi}(z) \, \overline{\bm \psi(z)}^\top \, \frac{|\dd z|}{2 \pi} = \begin{pmatrix}
    \mathcal{M}^{(1)} & \mathcal{M}^{(2)} & \cdots & \mathcal{M}^{(r)}
\end{pmatrix},
\end{equation}
where
\[
\mathcal{M}^{(k)}_{a, b} = \int_{\TT} \varphi_{a-1}(z) \, \overline{\varphi_{b-1}(z)} \, \omega^{(k)}(z) \frac{|\dd z|}{2\pi}.
\]
It is convenient to partition these objects according to the split between the reference system (first $n_1$ components) and the companion systems (remaining $n-n_1$ components),
\begin{equation}\label{eq:block-structure}
\bm{\mathcal{M}} = \begin{pmatrix}
\bm{\mathcal{M}}_{11} & \bm{\mathcal{M}}_{12}\\
\bm{\mathcal{M}}_{21} & \bm{\mathcal{M}}_{22}
\end{pmatrix}, \quad
\bm{\varphi}(z) = \begin{pmatrix} 
\bm{\varphi}_1(z)\\ 
\bm{\varphi}_2(z)
\end{pmatrix}, \quad
\bm{\psi}(z) = \begin{pmatrix}
\bm{\psi}_1(z)\\ 
\bm{\psi}_2(z)
\end{pmatrix}, \quad
\bm{u} = \begin{pmatrix}
\bm{0}\\
\bm{u}_2
\end{pmatrix}.
\end{equation}
Using \eqref{opuc orthogonality} and by the very definition of $\bm{\mathcal{M}}$, one can readily show that $\bm{\mathcal{M}}$ has a block upper-triangular structure
\begin{equation}\label{eq:M-upper-tri}
\bm{\mathcal{M}} = \begin{pmatrix}
\bm{\mathcal H}_1 & \bm{\mathcal{M}}_{12}\\
\bm{0} & \bm{\mathcal{M}}_{22}
\end{pmatrix}, \quad 
\bm{\mathcal{H}} = \operatorname{diag}(h_0, \cdots, h_{n-1}) = \operatorname{diag}(\bm{\mathcal H}_1, \bm{\mathcal H}_2).
\end{equation}
Note that each entry of $\mathcal{H}$ is positive. 

\section{Main Results} \label{section main results}

\subsection{MOPRL}

Our main results provide explicit formulas that express MOPRL and their associated kernel $K(x,y)$ in terms of the standard OPRL $\{p_k(x)\}_{k \geq 0}$ and its kernel $K_0(x,y)$. We present proofs of these theorems in Section~\ref{section proofs}.

\begin{theorem}\label{typeIIreduction}
The type II MOPRL $P_{\vec{n}}(x)$
%\footnote{\magenta{there seems to be inconsistent notation we use both $P_{\vec{n}}(x)$ and $P_n(x)$ (see MOPRL preliminaries). I prefer $P_{\vec{n}}(x)$ as it records $\vec{n}$ not just $n = |\vec{n}|$ which is a vague notation (we can have $\vec{n_1} \neq \vec{n_2}$ with $|\vec{n_1}| = |\vec{n_2}| = n$). Same comment for $Q_k(x)$, $\Phi_k(z)$ and $\Psi_k(z)$.} {\color{brown} Agreed, Maksim $\to$ Kenta: Try to clarify the use of the natural path in Section 2, so done if you are OK.}} 
defined in Definition~\ref{defn of type II} can be expressed as
\begin{align}
    P_{\vec{n}}(x) &= p_n(x) - \bm{v}_2^\top \bm{M}_{22}^{-1} \bm{p}_2(x)\\
    &= \frac{1}{\det \bm{M}_{22}} \det
\begin{pmatrix}
\bm{M}_{22}  & \bm{p}_2(x)  \\
\bm{v}_2^\top & p_n(x)
\end{pmatrix},
\end{align}
where $\bm{v}_2$ is defined as in  \eqref{defn of v} and \eqref{defn of pi phi v, 1-2}, $\bm{M}_{22}$ is defined in \eqref{defn of M} and \eqref{defn of M, H 4 blocks}, and $\bm{p}_2(x)$ is defined as in \eqref{defn of pi} and \eqref{defn of pi phi v, 1-2}.
\end{theorem}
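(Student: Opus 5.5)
Expand $P_{\vec n}$ in the basis of the reference orthogonal polynomials. Since $P_{\vec n}$ is monic of degree $n$ and $\{p_k\}$ are monic, we can write uniquely
\[
P_{\vec n}(x)=p_n(x)-\bm{c}^\top\bm{p}(x),\qquad \bm{c}=\begin{pmatrix}\bm c_1\\ \bm c_2\end{pmatrix}\in\R^{n},
\]
with $\bm c$ split according to the block decomposition \eqref{defn of pi phi v, 1-2}. By Remark-style reasoning (the span of $p_0w^{(j)},\dots,p_{n_j-1}w^{(j)}$ equals the span of $x^kw^{(j)}$, $k<n_j$, via a unitriangular change of basis), the type II conditions \eqref{type II orthogonality} are equivalent to $\int_\R P_{\vec n}(x)\,\bm q(x)\,\dd x=\bm 0$. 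Substituting the expansion and using the definitions \eqref{defn of v} and \eqref{defn of M} gives the linear system
\[
\bm v-\bm M^\top\bm c=\bm 0 .
\]

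Next, use the block upper-triangular structure \eqref{M block structure}. Transposing, $\bm M^\top=\begin{pmatrix}\bm H_1&\bm 0\\ \bm M_{12}^\top&\bm M_{22}^\top\end{pmatrix}$. Together with $\bm v=(\bm 0,\bm v_2)^\top$, the first block row reads $\bm H_1\bm c_1=\bm 0$, so $\bm c_1=\bm 0$ because $\bm H_1$ is diagonal with positive entries. The second block row then reduces to $\bm M_{22}^\top\bm c_2=\bm v_2$.

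The only genuine point is the invertibility of $\bm M_{22}$. Perfectness of the system gives $\det m\neq0$. The matrix $\bm M$ is obtained from $m$ by multiplication on the left and on the right by block unitriangular change-of-basis matrices (monomials to $p_k$ in each block), so $\det\bm M=\det m\neq0$. Since $\det\bm M=\det\bm H_1\det\bm M_{22}$, it follows that $\det \bm M_{22}\ne0$. Hence $\bm c_2=\bm M_{22}^{-\top}\bm v_2$, and therefore
\[
P_{\vec n}(x)=p_n(x)-\bm v_2^\top\bm M_{22}^{-1}\bm p_2(x),
\]
which is the first formula of Theorem~\ref{typeIIreduction}.

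For the determinantal form, apply the Schur complement identity
\[
\det\begin{pmatrix}A&b\\ c^\top&d\end{pmatrix}=\det A\,(d-c^\top A^{-1}b)
\]
with $A=\bm M_{22}$, $b=\bm p_2(x)$, $c=\bm v_2$, $d=p_n(x)$. Dividing by $\det\bm M_{22}$ yields the second expression. No step is really an obstacle. The mild care needed is in justifying the equivalence of the orthogonality conditions in the $p_k w^{(j)}$ basis and the determinant relation $\det\bm M=c\det m$ with $c\neq0$.
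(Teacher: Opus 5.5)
Your proposal is correct and follows essentially the paper's argument. You expand in the $p_k$ basis, the $w^{(1)}$ conditions force $\bm c_1=\bm 0$ (your first block row of $\bm M^\top\bm c=\bm v$), the companion conditions reduce to the $\bm M_{22}$ system, and the Schur complement gives the determinantal form. Your justification that $\det\bm M_{22}\neq 0$, via $\det\bm M=\det m$ together with $\det\bm M=\det\bm H_1\det\bm M_{22}$, is a useful addition: the paper's proof simply inverts $\bm M_{22}$, and the identities that support this appear only in its appendix.
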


\begin{theorem}\label{type I reduction}
The type I linear form $Q_{\vec{n}}(x)$ defined in Definition~\ref{defn of type I} can be expressed as
\begin{align}\label{eq:Q_sol_general}
Q_{\vec{n}}(x) &= \bm{e}_n^\top \bm{M}^{\top-1} \bm{q}(x),\\
&= -\frac{1}{\det \bm{M}} \det
\begin{pmatrix}
 \bm{M} & \bm{e}_n\\
\;\bm{q}(x)^\top  &  0
\end{pmatrix},
\end{align}
where $\bm{e}_n$ is the standard basis column vector $(0, \cdots, 0, 1)^\top$, $\bm{M}$ is defined as in \eqref{defn of M}, and $\bm{q}(x)$ is defined as in \eqref{defn of phi}.
\end{theorem}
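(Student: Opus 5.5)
We want to prove Theorem~\ref{type I reduction}. The plan is to write $Q_{\vec n}$ in the weighted basis and reduce Definition~\ref{defn of type I} to the linear system $\bm M\bm c=\bm e_n$.

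\textbf{Step 1 (ansatz).} Since $\deg A_j\le n_j-1$ and $\{p_0,\dots,p_{n_j-1}\}$ spans $\mathcal P^{\R}_{n_j-1}$, each $A_j$ can be expanded in this basis. Hence $Q_{\vec n}$ has a unique expansion $Q_{\vec n}(x)=\bm c^\top\bm q(x)$ for some $\bm c\in\R^n$. The map $\bm c\mapsto\bm c^\top\bm q$ is a bijection onto linear forms $\sum A_j w^{(j)}$ with the stated degree bounds.

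\textbf{Step 2 (change of basis in the conditions).} The conditions \eqref{type I orthogonality 1} are imposed against $x^k$, $k=0,\dots,n-1$. Because the $p_k$ are monic, $\bm p(x)=L\,(1,x,\dots,x^{n-1})^\top$ with $L$ unit lower triangular. Thus $\int x^kQ_{\vec n}\,\dd x=\delta_{k,n-1}$ for all $k\le n-1$ is equivalent to
\[
\int_\R \bm p(x)Q_{\vec n}(x)\,\dd x=L\bm e_n=\bm e_n .
\]
Here the last row of $L$ contributes exactly one $1$ in position $n$, and the earlier rows contribute nothing in position $n$ since they have zeros there. This is the analogue of Remark~\ref{rem 1}.

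\textbf{Step 3 (the linear system).} Substituting the ansatz gives
\[
\int\bm p(x)\bm q(x)^\top\,\dd x\;\bm c=\bm M\bm c=\bm e_n .
\]
We need $\bm M$ to be invertible. In the basis $\{p_k\}$, $\bm M$ equals $L$ times the moment matrix $m$ times the block-diagonal matrix of the unit lower-triangular changes of basis $x^b\mapsto p_b$ in each column block. Since the system is perfect, $\det m\neq0$, so $\det\bm M=\det m\neq 0$. (Equivalently, invertibility follows from $\det\bm H_1\det\bm M_{22}\ne0$ via \eqref{M block structure}.)

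Solving gives $\bm c=\bm M^{-1}\bm e_n$, hence
\[
Q_{\vec n}(x)=\bm e_n^\top\bm M^{\top-1}\bm q(x).
\]
Uniqueness of the type I form is exactly the uniqueness of $\bm c$.

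\textbf{Step 4 (determinantal form).} Apply the Schur complement to the bordered matrix with invertible block $\bm M$:
\[
\det\begin{pmatrix}\bm M&\bm e_n\\ \bm q^\top&0\end{pmatrix}=\det\bm M\,\bigl(0-\bm q^\top\bm M^{-1}\bm e_n\bigr).
\]
Since $\bm q^\top\bm M^{-1}\bm e_n$ is a scalar, it equals its transpose $\bm e_n^\top\bm M^{\top-1}\bm q$. Dividing by $-\det\bm M$ gives the second formula.

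The only point needing care is Step 2. One must check that the triangular change of basis maps $\bm e_n$ to itself, and that the column changes of basis preserve invertibility. Both follow from monicity, so I expect no real obstacle.
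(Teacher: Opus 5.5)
Your proposal is correct and follows the same route as the paper: expand each $A_j$ in $\{p_k\}$, write $Q_{\vec n}=\bm c^\top\bm q$, test against $p_0,\dots,p_{n-1}$ to obtain $\bm M\bm c=\bm e_n$, and finish with the Schur complement. You are more explicit than the paper on two points it leaves implicit. You check that the unit lower-triangular change of basis fixes $\bm e_n$, and you show that $\det\bm M=\det m\neq 0$ follows from perfectness.
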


\begin{theorem}\label{kernel reduction}
Let $K(x,y)$ and $K_0(x,y)$ be the Christoffel-Darboux kernels defined as in \eqref{defn of CD kernel} and \eqref{defn of CD kernel for p} respectively.
Then, one has
\begin{equation} \label{reduction formula equation}
    K(x,y)-K_0(x,y) = \bm{r}(y)^\top \bm{M}_{22}^{-1} \bm{p}_2(x),
\end{equation}
where the vector $\bm{r}(y)$ is the projection of the companion weighted basis functions off the reference space,
\begin{equation} \label{defn of q}
\bm{r}(y) = \bm{q}_2(y) - \int_{\R} K_0(x,y) \bm{q}_2(x) \, \dd x,
\end{equation}
$\bm{M}_{22}$ is defined in \eqref{defn of M} and \eqref{defn of M, H 4 blocks}, and $\bm{p}_2(x)$ is defined as in \eqref{defn of pi} and \eqref{defn of pi phi v, 1-2}.
\end{theorem}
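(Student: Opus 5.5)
Our plan is to write the kernel as $K(x,y)=\bm q(y)^\top \bm M^{-1}\bm p(x)$, and then use the block upper-triangular form \eqref{M block structure} to invert $\bm M$ explicitly.

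\emph{Step 1: the matrix form of the kernel.} Along the natural path, each $P_k$ has degree $k$, so $P_k\in\mathrm{span}\{p_0,\dots,p_{n-1}\}$ for $0\le k\le n-1$. Each $Q_k$ lies in the span of the entries of $\bm q$, because the multi-indices $\vec n_{k+1}\le \vec n$ along the path. We therefore write
\[
\bm P(x)=L\,\bm p(x),\qquad \bm Q(y)=R\,\bm q(y),
\]
where $\bm P=(P_0,\dots,P_{n-1})^\top$, $\bm Q=(Q_0,\dots,Q_{n-1})^\top$, $L$ is unit lower triangular, and $R$ is some $n\times n$ matrix. Lemma~\ref{biorthogonality} gives
\[
I=\int \bm P(s)\bm Q(s)^\top \dd s = L\bm M R^\top .
\]
The matrix $\bm M$ is invertible by perfectness, since it equals the moment matrix $m$ transformed by triangular changes of basis. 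Hence $R^\top L=\bm M^{-1}$, and so
\[
K(x,y)=\bm Q(y)^\top\bm P(x)=\bm q(y)^\top R^\top L\,\bm p(x)=\bm q(y)^\top \bm M^{-1}\bm p(x).
\]
This representation does not depend on the path, which is consistent with the Daems--Kuijlaars result.

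\emph{Step 2: block inversion.} From \eqref{M block structure},
\[
\bm M^{-1}=\begin{pmatrix}\bm H_1^{-1} & -\bm H_1^{-1}\bm M_{12}\bm M_{22}^{-1}\\ \bm 0 & \bm M_{22}^{-1}\end{pmatrix},
\]
where $\bm M_{22}$ is invertible because $\det\bm M=\det\bm H_1\det\bm M_{22}\neq 0$. Expanding the kernel gives
\[
K(x,y)=\bm q_1(y)^\top\bm H_1^{-1}\bm p_1(x)+\bigl(\bm q_2(y)^\top-\bm q_1(y)^\top\bm H_1^{-1}\bm M_{12}\bigr)\bm M_{22}^{-1}\bm p_2(x).
\]

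\emph{Step 3: matching with $K_0$ and $\bm r$.} This is the step where the bookkeeping must be done carefully. Starting from \eqref{K0 block}, we compute
\[
\int K_0(x,y)\bm q_2(x)^\top\dd x=\bm q_1(y)^\top\bm H_1^{-1}\bm M_{12}+w^{(1)}(y)\bm p_2(y)^\top\bm H_2^{-1}\bm M_{22},
\]
using $\int \bm p_1\bm q_2^\top=\bm M_{12}$ and $\int\bm p_2\bm q_2^\top=\bm M_{22}$. Therefore
\[
\bm r(y)^\top=\bm q_2(y)^\top-\bm q_1(y)^\top\bm H_1^{-1}\bm M_{12}-w^{(1)}(y)\bm p_2(y)^\top\bm H_2^{-1}\bm M_{22}.
\]
Multiplying on the right by $\bm M_{22}^{-1}\bm p_2(x)$ yields
\[
\bm r(y)^\top\bm M_{22}^{-1}\bm p_2(x)=\bigl(\bm q_2(y)^\top-\bm q_1(y)^\top\bm H_1^{-1}\bm M_{12}\bigr)\bm M_{22}^{-1}\bm p_2(x)-w^{(1)}(y)\bm p_2(y)^\top\bm H_2^{-1}\bm p_2(x).
\]
Adding $K_0(x,y)$, written via \eqref{K0 block}, cancels the $w^{(1)}\bm p_2^\top\bm H_2^{-1}\bm p_2$ term. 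What remains is exactly the expression for $K(x,y)$ from Step 2, which proves \eqref{reduction formula equation}.

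The main obstacle we expect is Step 1. It requires justifying that $R$ is well defined and that biorthogonality, a statement about the specific natural-path sequences, gives $\bm M^{-1}=R^\top L$. Once $L\bm M R^\top=I$ is established with square matrices, this follows from invertibility alone. The remaining steps are block-matrix algebra.
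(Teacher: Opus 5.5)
Your proof is correct and follows the paper's argument. The paper also first establishes $K(x,y)=\bm q(y)^\top\bm M^{-1}\bm p(x)$, then does exactly your block inversion of \eqref{M block structure} and your identification of $\bm r(y)^\top$ via \eqref{K0 block}; the only difference is Step~1, where the paper (Lemma~\ref{lem:kernel uniqueness}) shows that $K(x,\cdot)$ and $\bm q(\cdot)^\top\bm M^{-1}\bm p(x)$ are both the unique element of $\operatorname{span}\{q_1,\ldots,q_n\}$ reproducing $\mathcal P^{\R}_{n-1}$, whereas you read the identity directly off $L\bm M R^\top=I$, and both versions rest on the same two inputs (biorthogonality and invertibility of $\bm M$).
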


As an application of Theorem~\ref{kernel reduction}, we compute the multiple Hermite kernel for a general multi-index. Let $\vec n =(n_1,\ldots,n_r)\in\mathbb N^r$, $n=|\vec n|$, and consider the weights
\begin{equation}\label{mult hermit weights}
    w^{(j)}(x)=e^{a_jx-x^2},\qquad a_1=0,\qquad a_i\neq a_j \quad \text{for} \quad i\neq j,\qquad j=1,\ldots,r .
\end{equation}
For the reduction, the Gaussian weight $w^{(1)}(x)=e^{-x^2}$ is taken as the reference weight, and the remaining weights $w^{(2)},\ldots,w^{(r)}$ form the companion family. There are many studies of this classical family of multiple Hermite polynomials; see \cite{ApBrVA,BleKuij-HerLag} and the references therein. The explicit formulae for type II multiple Hermite polynomials are well known; see, for example, \cite[Section~23.5]{Ismail}.
\begin{align} \label{HermiteTypeII} H_{\vec n}(x;a_1,\ldots,a_r) &= \left(-\frac{1}{2}\right)^{|\vec n|} \sum_{\ell_1=0}^{n_1}\cdots\sum_{\ell_r=0}^{n_r} 2^{\ell_1+\cdots+\ell_r} \prod_{q=1}^r \frac{(-n_q)_{\ell_q}}{\ell_q!}\, a_q^{\,n_q-\ell_q}\, \widehat{H}_{\ell_1+\cdots+\ell_r}(x), \end{align} 
where $\widehat{H}_m(x)$ denotes the monic Hermite polynomial \begin{align} \label{HermiteMonic} \widehat{H}_m(x)= \sum_{k=0}^{\lfloor m/2\rfloor} (-1)^k \frac{(-m)_{2k}}{k!}\, \frac{x^{m-2k}}{2^{2k}}, \qquad m\in\mathbb N_0 .
\end{align}
Explicit formulae for type I multiple Hermite polynomials were obtained more recently in \cite{BranquinhoDiazFoulquieMorenoManas2025}.
 
\begin{multline}
\label{HermiteTypeI}
H^{(i)}_{\vec n}(x;a_1,\ldots,a_r)
=
\frac{(-1)^{n_i-1}}{\sqrt{\pi}(n_i-1)!}\,
\frac{2^{|\vec n|-1}e^{-a_i^2/4}}
{\prod_{\substack{q=1\\ q\neq i}}^r (a_i-a_q)^{n_q}}
\\
\times
\sum_{\substack{\ell_1,\ldots,\ell_r\ge 0\\
\ell_1+\cdots+\ell_r\le n_i-1}}
\frac{(-n_i+1)_{\ell_1+\cdots+\ell_r}}{\ell_i!}
\prod_{\substack{q=1\\ q\neq i}}^r
\frac{(n_q)_{\ell_q}}
{\ell_q!\,(a_q-a_i)^{\ell_q}}\,
\widehat{H}_{n_i-1-\ell_1-\cdots-\ell_r}\!\left(\frac{a_i}{2}\right)
x^{\ell_i},
\end{multline}
 for $i\in\{1,\ldots,r\}$.

The preceding formulae could in principle be obtained from Theorems~\ref{typeIIreduction} and~\ref{type I reduction}. However, the corresponding multiple Hermite Christoffel--Darboux kernel, to the best of our knowledge, has not previously appeared in the literature. Therefore, we only present a closed-form expression for the latter.

\begin{corollary}
\label{cor-general-Hermite-kernel}
Consider a general multi-index
\(\vec n=(n_1,\ldots,n_r)\in\mathbb N^r\) and the weights given in \eqref{mult hermit weights}.For $j=2,\ldots,r$, define
\begin{equation}\label{eq:Lj-general-Hermite}
    t_j=\frac{a_j}{2}, \quad \mbox{and} \quad L_j(z)=\left(\frac{z}{t_j}\right)^{n_1}\prod_{\substack{i=2\\ i\neq j}}^r\left(\frac{z-t_i}{t_j-t_i}\right)^{n_i}.
\end{equation}
For $0\leq q\leq n_j-1$, define the coefficients $v_s^{(j,q)}$ by
\begin{equation}\label{eq:Pi-jq}
    \sum_{s=0}^{N-1}v_s^{(j,q)}z^{n_1+s} := L_j(z) \frac{(z-t_j)^q}{q!}\sum_{\mu=0}^{n_j-1-q}\frac{1}{\mu!}\left[\frac{\dd^\mu}{\dd z^\mu}\frac{1}{L_j(z)}\right]_{z=t_j}(z-t_j)^\mu,
\end{equation}
where $N = n-n_1$.
For $0 \leq b \leq n_j-1$, $0 \leq m \leq n_1-1$, and $0 \leq s \leq N-1$, introduce
\begin{equation}\label{eq:U-Gamma-explicit}
    U^{(j)}_{b,s}=\frac{e^{-t_j^2}}{\sqrt{\pi}}\sum_{q=b}^{n_j-1}(-1)^{q-b}2^q t_j^{q-b}\binom{q}{b}v_s^{(j,q)},\quad
    \Gamma_{m,s}=\sum_{j=2}^r\sum_{q=0}^{\min(m,n_j-1)}q!\binom{m}{q}t_j^{m-q}v_s^{(j,q)}.
\end{equation}
Then, the multiple Hermite kernel is given by
\begin{multline}\label{eq:kernel-simplified-explicit}
    K_{\vec n}(x,y)=e^{-y^2}\sum_{m=0}^{n_1-1}\frac{2^m\widehat H_m(y)}{\sqrt{\pi}\,m!}\left(\widehat H_m(x)-\sum_{s=0}^{N-1}\Gamma_{m,s}\widehat H_{n_1+s}(x)\right)\\
    +\sum_{s=0}^{N-1}\sum_{j=2}^r\sum_{b=0}^{n_j-1}U^{(j)}_{b,s}\widehat H_{n_1+s}(x)\widehat H_b(y)e^{a_jy-y^2},
\end{multline}
where $\widehat H_m(x)$ is the monic classical Hermite polynomial.
\end{corollary}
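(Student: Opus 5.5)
The plan is to specialize Theorem~\ref{kernel reduction} to the weights \eqref{mult hermit weights}. Then $p_k=\widehat H_k$ and $h_k=\sqrt{\pi}\,k!/2^k$, and the task reduces to computing $\bm{M}_{22}^{-1}$ and $\bm r(y)$ explicitly. Two elementary Gaussian facts do the work.

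The first is the translation identity $e^{a_jx-x^2}=e^{t_j^2}e^{-(x-t_j)^2}$ with $t_j=a_j/2$. Combined with $\widehat H_m(x)=\widehat H_m((x-t_j)+t_j)=\sum_{q}\binom{m}{q}t_j^{m-q}\widehat H_q(x-t_j)$, it gives
\[
\int \widehat H_m(x)\widehat H_b(x)e^{a_jx-x^2}\,\dd x = e^{t_j^2}\sum_{q\le \min(m,b)}\binom{m}{q}\binom{b}{q} t_j^{m+b-2q} h_q .
\]
The second is an exponential generating function description: the moment functional $f\mapsto \int f(x) e^{a_jx-x^2}\dd x$ acting on $\widehat H_m$ equals $\sqrt{\pi}\,e^{t_j^2}t_j^m$. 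More generally, pairing against $\widehat H_b(x)e^{a_jx-x^2}$ acts on polynomials $f$ as $\sqrt\pi e^{t_j^2}$ times a finite combination of the derivatives $f^{(q)}(t_j)/2^q$, for $q\le b$, after expanding $f$ in the monic Hermite basis. So the companion block is an evaluation-type functional on $\mathcal P$: the columns of $\bm M_{22}$ indexed by $(j,b)$ are the ``Hermite-umbral'' images of the derivatives $\partial^q|_{z=t_j}$ for $q\le b$, applied to the basis $\widehat H_{n_1+s}\leftrightarrow z^{n_1+s}$.

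Concretely, I would introduce the umbral map $\widehat H_k(x)\mapsto z^k$. Under it, pairing against $\widehat H_b e^{a_jx-x^2}$ becomes (up to the factor $\sqrt\pi e^{t_j^2}$ and a triangular change from $b$ to $q$) the functional $g\mapsto g^{(q)}(t_j)$ on polynomials $g(z)$. The component $\bm p_2$ corresponds to the span of $z^{n_1},\dots,z^{n-1}$, that is, $z^{n_1}$ times polynomials of degree $<N$. Inverting $\bm M_{22}$ therefore amounts to a Hermite interpolation problem: for each $(j,q)$, find $g=z^{n_1}\pi(z)$ with $\deg\pi<N$ such that $g^{(q')}(t_i)/q'!=\delta_{ij}\delta_{qq'}$ for all $i\ge2$ and $q'<n_i$. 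The standard Hermite interpolation basis gives exactly \eqref{eq:Pi-jq}: $L_j$ vanishes to order $n_i$ at $t_i$ for $i\neq j$ and to order $n_1$ at $0$, and the truncated Taylor series of $1/L_j$ at $t_j$ enforces the conditions there. This identifies the $(j,q)$-th dual basis vector with the coefficients $v^{(j,q)}_s$, and undoing the triangular $b\leftrightarrow q$ change together with the constant $e^{-t_j^2}/\sqrt\pi$ produces $U^{(j)}_{b,s}$ in \eqref{eq:U-Gamma-explicit}. The signs $(-1)^{q-b}2^q t_j^{q-b}\binom{q}{b}$ come from inverting the binomial matrix $\binom{b}{q}t_j^{b-q}$ and from the normalization $h_q$.

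Next I would compute $\bm r(y)=\bm q_2(y)-\int K_0(x,y)\bm q_2(x)\dd x$. Since $\bm q_2(y)=\bm p_2$-weighted terms, the first part yields the second line of \eqref{eq:kernel-simplified-explicit}, namely $\sum U^{(j)}_{b,s}\widehat H_{n_1+s}(x)\widehat H_b(y)e^{a_jy-y^2}$. The projection part equals $\sum_{m<n_1}\frac{\widehat H_m(y)e^{-y^2}}{h_m}\int \widehat H_m\,\bm q_2^\top$. Contracting with $\bm M_{22}^{-1}\bm p_2(x)$ gives, by the same umbral dictionary, the functional $\pi\mapsto\sum_{j,q}(\partial^q z^m)|_{t_j}/q!\cdot(\text{interpolant})$, which produces $\Gamma_{m,s}=\sum_j\sum_q q!\binom{m}{q}t_j^{m-q}v^{(j,q)}_s$. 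Adding $K_0$ then gives the first line. The main obstacle is the bookkeeping in the $\bm M_{22}$ inversion: verifying that the triangular change of basis between $\widehat H_b$-pairings and derivative evaluations is exactly the binomial matrix, with the stated powers of $2$ and $t_j$. I would check this first on $r=2$, $n_1=0$ to fix the constants.
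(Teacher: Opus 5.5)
Your route is essentially the paper's. The paper also factors $\bm M_{22}=\bm V\bm R$, with $\bm V$ the confluent Vandermonde matrix $\bigl[\frac{\dd^q}{\dd t^q}t^{n_1+s}\bigr]_{t=t_j}$ and $\bm R$ block upper-triangular with entries $\sqrt\pi\,e^{t_j^2}2^{-q}\binom{b}{q}t_j^{b-q}$. It inverts $\bm V$ with the Hermite interpolation polynomials \eqref{eq:Pi-jq} and $\bm R$ with the Pascal inverse $S(t)^{-1}=S(-t)$.

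The genuine difference is how you obtain $M^{(j)}_{m,b}$. You complete the square and use the Appell shift $\widehat H_m(u+t_j)=\sum_q\binom{m}{q}t_j^{m-q}\widehat H_q(u)$. The paper instead uses the linearization formula for $\widehat H_m\widehat H_b$ together with $\int\widehat H_k e^{a_jx-x^2}\,\dd x=\sqrt\pi\,e^{t_j^2}t_j^k$. Both give the same entries. Your umbral reading ($\widehat H_k\mapsto z^k$ turns the shifted Gaussian pairings into derivative evaluations at $t_j$) nicely explains why a confluent Vandermonde appears, but the factorization is the same.

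The one step that fails as written is the assembly. $K_0$ is the rank-$n$ kernel \eqref{defn of CD kernel for p}, so the projection in $\bm r(y)$ runs over all $m\le n-1$, not $m<n_1$. If you truncate the projection to $m<n_1$ and then add the full $K_0$, you are left with a spurious term
\[
e^{-y^2}\sum_{t=0}^{N-1}\frac{\widehat H_{n_1+t}(x)\widehat H_{n_1+t}(y)}{h_{n_1+t}},
\]
which does not appear in \eqref{eq:kernel-simplified-explicit}.

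The missing observation is this. For $m=n_1+t$, the vector $\int\widehat H_m\,\bm q_2^\top\,\dd x$ is the $t$-th row of $\bm M_{22}$, so contracting with $\bm M_{22}^{-1}\bm p_2(x)$ gives exactly $\widehat H_{n_1+t}(x)$. That is, $\Gamma_{n_1+t,s}=\delta_{st}$, as in \eqref{Gamma_ms 1}. These projection terms cancel the top $N$ terms of $K_0$, which is why the first line of the formula runs only over $m<n_1$.

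Alternatively, start from $K=\bm q_1^\top\bm H_1^{-1}\bm p_1+(\bm q_2^\top-\bm q_1^\top\bm H_1^{-1}\bm M_{12})\bm M_{22}^{-1}\bm p_2$. There only $m<n_1$ appears, and you add just the $n_1$-truncated kernel.

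A smaller bookkeeping point concerns your interpolation condition $g^{(q')}(t_i)/q'!=\delta_{ij}\delta_{qq'}$. It is satisfied by $q!\,\Pi_{j,q}$, not by the $\Pi_{j,q}$ of \eqref{eq:Pi-jq}, which satisfies $\Pi_{j,q}^{(\nu)}(t_i)=\delta_{ij}\delta_{q\nu}$ because of the factor $(z-t_j)^q/q!$. This is harmless if you carry the $q!$ into the triangular factor, namely $e^{t_j^2}h_q\binom{b}{q}t_j^{b-q}$ instead of the paper's $\sqrt\pi\,e^{t_j^2}2^{-q}\binom{b}{q}t_j^{b-q}$. The factor then cancels in both $U^{(j)}_{b,s}$ and $\Gamma_{m,s}$. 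Still, your claim that the dual basis is ``exactly'' \eqref{eq:Pi-jq} is off by $q!$.
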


%%%%%%%%%%%%%%%%

\subsection{MOPUC}

So far, we have focused on MOPRL. However, analogous results also hold for MOPUC, since the proofs of the theorems for MOPRL rely only on the linear algebraic structure of the Hilbert space of polynomials $\mathcal{P}^{\mathbb{R}}_{n}$; see Section~\ref{section proofs}. Therefore, the proofs of the corresponding theorems for MOPUC, presented in Section~\ref{appendix mopuc}, are similar to those in Section~\ref{section proofs}.

\begin{theorem}\label{thm:typeII}
A type~II multiple orthogonal polynomial $\Phi_{\vec{n}}(z)$ on the unit circle $\TT$ defined in Definition~\ref{def:typeII MOPUC} admits the following reduction formula,
\begin{align}\label{eq:typeII-expansion}
\Phi_{\vec{n}}(z) &= \varphi_{n}(z) - \bm{u}_2^\top  \bm{\mathcal M}_{22}^{-1} \bm{\varphi}_2(z)\\
&= \frac{1}{\det \bm{\mathcal M}_{22}} 
\det\begin{pmatrix}
\bm{\mathcal M}_{22} & \bm{\varphi}_2(z)\\
\bm{u}_2^\top & \varphi_{n}(z)
\end{pmatrix},
\end{align}
where $\varphi_n(z)$ is a standard OPUC, $\bm{u}_2$ is a companion part of an $n$-column vector of the mixed-moments of $\varphi_n(z)$ as in \eqref{eq:v-def} and \eqref{eq:block-structure}, $\bm{\mathcal{M}}_{22}$ is a companion $(n-n_1) \times (n-n_1)$ matrix of mixed-moments defined in \eqref{eq:M-def} and \eqref{eq:block-structure}, and $\bm{\varphi}_2(z)$ is a companion part of an $n$-column vector of standard OPUC defined in \eqref{eq:pi-primary} and \eqref{eq:block-structure}.
\end{theorem}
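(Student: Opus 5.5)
We will expand $\Phi_{\vec n}$ in the OPUC basis and reduce the orthogonality conditions to a linear system whose only nontrivial block is $\bm{\mathcal M}_{22}$. Since $\Phi_{\vec n}$ is monic of degree $n$ and $\{\varphi_k\}_{k=0}^{n}$ is a monic basis of $\mathcal P_n^{\TT}$, we can write uniquely
\[
\Phi_{\vec n}(z)=\varphi_n(z)-\bm c^\top\bm\varphi(z),\qquad \bm c=\begin{pmatrix}\bm c_1\\ \bm c_2\end{pmatrix}\in\C^{n}.
\]
By Remark~\ref{rem 1}, the conditions \eqref{eq:typeII MOPUC-orth} are equivalent to $\int_{\TT}\Phi_{\vec n}(z)\,\overline{\varphi_k(z)}\,\omega^{(j)}(z)\frac{|\dd z|}{2\pi}=0$ for $0\le k\le n_j-1$ and $1\le j\le r$. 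Stacked in the ordering of \eqref{eq:chi-stacked}, this reads $\int_{\TT}\Phi_{\vec n}\,\overline{\bm\psi}\,\frac{|\dd z|}{2\pi}=\bm 0$. Substituting the expansion and using \eqref{eq:v-def} and \eqref{eq:M-def}, this becomes
\[
\bm u^\top-\bm c^\top\bm{\mathcal M}=\bm 0^\top ,\qquad\text{i.e.}\qquad \bm{\mathcal M}^\top\bm c=\bm u .
\]

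Next we exploit the block structure \eqref{eq:M-upper-tri} together with $\bm u=(\bm 0,\bm u_2)^\top$. Writing $\bm c^\top\bm{\mathcal M}=(\bm c_1^\top\bm{\mathcal H}_1,\ \bm c_1^\top\bm{\mathcal M}_{12}+\bm c_2^\top\bm{\mathcal M}_{22})$, the first block gives $\bm c_1^\top\bm{\mathcal H}_1=\bm 0$. Hence $\bm c_1=\bm 0$, because $\bm{\mathcal H}_1$ is diagonal with positive entries. The second block then reduces to $\bm c_2^\top\bm{\mathcal M}_{22}=\bm u_2^\top$. This yields $\bm c_2^\top=\bm u_2^\top\bm{\mathcal M}_{22}^{-1}$ and therefore the first formula $\Phi_{\vec n}=\varphi_n-\bm u_2^\top\bm{\mathcal M}_{22}^{-1}\bm\varphi_2$.

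The one point needing justification is the invertibility of $\bm{\mathcal M}_{22}$, which we take as the main (mild) obstacle. We argue from perfectness (Definition~\ref{def:normalperfect}): uniqueness of $\Phi_{\vec n}$ means the linear system $\bm{\mathcal M}^\top\bm c=\bm u$ has a unique solution, so $\bm{\mathcal M}$ is nonsingular. Indeed, $\bm{\mathcal M}$ is obtained from the moment matrix in the monomial basis by invertible lower-triangular changes of basis on both sides, so the uniqueness of $\Phi_{\vec n}$ is equivalent to $\det\bm{\mathcal M}\neq 0$. Since $\det\bm{\mathcal M}=\det\bm{\mathcal H}_1\det\bm{\mathcal M}_{22}$, it follows that $\det\bm{\mathcal M}_{22}\ne0$.

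Finally, the determinant form follows from the Schur complement identity
\[
\det\begin{pmatrix}A&\bm b\\ \bm d^\top&e\end{pmatrix}=\det A\,\bigl(e-\bm d^\top A^{-1}\bm b\bigr),
\]
applied with $A=\bm{\mathcal M}_{22}$, $\bm b=\bm\varphi_2(z)$, $\bm d=\bm u_2$ and $e=\varphi_n(z)$. Dividing by $\det\bm{\mathcal M}_{22}$ gives the second expression in \eqref{eq:typeII-expansion}.
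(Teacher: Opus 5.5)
Your proposal is correct and follows the paper's proof essentially step for step: expand $\Phi_{\vec n}$ in the OPUC basis, use the positive diagonal block $\bm{\mathcal H}_1$ to kill the reference coefficients, solve the remaining system with $\bm{\mathcal M}_{22}$, and finish with the Schur complement. Your explicit justification that $\bm{\mathcal M}_{22}$ is invertible (unique solvability of the square system from perfectness, together with $\det\bm{\mathcal M}=\det\bm{\mathcal H}_1\det\bm{\mathcal M}_{22}$) fills in a point the paper leaves implicit in this proof.
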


\begin{theorem}\label{thm:typeI}
The type~I linear form $\Psi_{\vec{n}}(z)$ defined in Definition~\ref{def:typeI MOPUC} admits the following reduction formula,
\begin{align}\label{eq:typeI-explicit}
\Psi_{\vec{n}}(z) &= \bm{e}_n^\top \overline{\bm{\mathcal{M}}}^{\top -1} \bm{\psi}(z)\\
&= -\frac{1}{\det \overline{\bm{\mathcal{M}}}}
\det\begin{pmatrix}
\overline{\bm{\mathcal{M}}} & \bm{e}_n\\
\bm{\psi}(z)^\top & 0
\end{pmatrix},
\end{align}
where $\bm{e}_n$ is the standard basis column vector $(0, \cdots, 0, 1)^\top$, $\bm{\mathcal{M}}$ is a $n \times n$ matrix of mixed-moments defined in \eqref{eq:M-def}, and $\bm{\psi}(z)$ is an $n$-column vector of weighted OPUC defined in \eqref{eq:chi-stacked}.
\end{theorem}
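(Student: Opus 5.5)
Since $\deg \Lambda_j\le n_j-1$ and $\{\varphi_k\}_{k=0}^{n_j-1}$ is a basis of $\mathcal P^{\TT}_{n_j-1}$, we can write $\Lambda_j(z)=\sum_{k=0}^{n_j-1} c^{(j)}_k \varphi_k(z)$. The linear form then becomes $\Psi_{\vec n}(z)=\bm c^\top \bm\psi(z)$, where $\bm c\in\C^n$ is the stacked coefficient vector, ordered as in \eqref{eq:chi-stacked}. The task is to identify $\bm c$.

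By Remark~\ref{rem 1}, the conditions \eqref{eq:typeI MOPC-orth} are equivalent to
\begin{equation}
\int_{\TT}\Psi_{\vec n}(z)\,\overline{\varphi_k(z)}\,\frac{|\dd z|}{2\pi}=\delta_{k,n-1},\qquad k=0,\dots,n-1 .
\end{equation}
Indeed, $\varphi_k$ is monic and a lower-triangular combination of $z^0,\dots,z^k$, so these conditions span the same set as the original ones. The $k=n-1$ condition is still $1$, because the lower-degree terms contribute zeros.

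In vector form the conditions read $\int_{\TT}\overline{\bm\varphi(z)}\,\bm\psi(z)^\top\bm c\,\frac{|\dd z|}{2\pi}=\bm e_n$. The matrix appearing here is $\int_\TT\overline{\bm\varphi}\,\bm\psi^\top\frac{|\dd z|}{2\pi}$. Since the weights $\omega^{(j)}$ are nonnegative, hence real, this matrix is the entrywise complex conjugate of $\bm{\mathcal M}$ in \eqref{eq:M-def}, i.e.\ $\overline{\bm{\mathcal M}}$. So the system is $\overline{\bm{\mathcal M}}\,\bm c=\bm e_n$.

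Perfectness (Definition~\ref{def:normalperfect}) gives uniqueness of $\Psi_{\vec n}$. Uniqueness means the homogeneous system has only the trivial solution, so $\overline{\bm{\mathcal M}}$ is invertible. Alternatively, the block triangular form \eqref{eq:M-upper-tri} shows that invertibility is equivalent to $\det\bm{\mathcal M}_{22}\neq0$. Hence $\bm c=\overline{\bm{\mathcal M}}^{-1}\bm e_n$ and
\begin{equation}
\Psi_{\vec n}(z)=\bm c^\top\bm\psi(z)=\bm e_n^\top\overline{\bm{\mathcal M}}^{\top-1}\bm\psi(z),
\end{equation}
which is the first formula.

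For the determinantal form, apply the Schur complement to the bordered matrix with respect to its invertible block $\overline{\bm{\mathcal M}}$:
\begin{equation}
\det\begin{pmatrix}\overline{\bm{\mathcal M}}&\bm e_n\\ \bm\psi^\top&0\end{pmatrix}=\det\overline{\bm{\mathcal M}}\,\bigl(0-\bm\psi^\top\overline{\bm{\mathcal M}}^{-1}\bm e_n\bigr).
\end{equation}
The scalar $\bm\psi^\top\overline{\bm{\mathcal M}}^{-1}\bm e_n$ equals its own transpose, $\bm e_n^\top\overline{\bm{\mathcal M}}^{\top-1}\bm\psi$. Substituting this gives the stated expression with the minus sign.

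The only delicate point is the conjugation bookkeeping. The inner product conjugates the test polynomial $\varphi_k$, not $\Psi_{\vec n}$. This is what produces $\overline{\bm{\mathcal M}}$ rather than $\bm{\mathcal M}$ or its adjoint, and it relies on the weights being real.
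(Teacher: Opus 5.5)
Your proof is correct and follows the paper's argument: you expand each $\Lambda_j$ in the OPUC basis, test against $\varphi_0,\ldots,\varphi_{n-1}$ via Remark~\ref{rem 1} to get $\overline{\bm{\mathcal M}}\,\bm c=\bm e_n$ (the paper writes this as $\overline{\bm\delta}^\top\bm{\mathcal M}^\top=\bm e_n^\top$ and conjugates), and you spell out the Schur-complement step that the paper leaves implicit. One small correction: $\int_{\TT}\overline{\bm\varphi}\,\bm\psi^\top\frac{|\dd z|}{2\pi}=\overline{\bm{\mathcal M}}$ follows just from conjugating \eqref{eq:M-def} and does not need real weights; realness only matters for matching the entrywise formula for $\mathcal M^{(k)}_{a,b}$.
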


\begin{theorem}\label{thm:kernel}
Let $\mathcal K$ and $\mathcal K_0$ be the Christoffel-Darboux kernels for MOPUC and OPUC, defined as in \eqref{eq:Kn-def} and \eqref{eq:Szego-kernel} respectively. Then, one has
\begin{equation}\label{eq:kernel-reduction}
\mathcal K(z,\xi) - \mathcal K_0(z,\xi) = \overline{\bm{\pi}(\xi)}^\top \bm{\mathcal M}_{22}^{-1} \,
\bm{\varphi}_2(z),
\end{equation}
where the projection $\bm{\pi}(\xi)$ is defined by
\begin{equation}\label{eq:q-def}
\bm{\pi}(\xi) = \bm{\psi}_2(\xi) - \int_{\TT}  \overline{\mathcal K_0 (s,\xi)} \, \bm{\psi}_2(s) \, \frac{|\dd s|}{2\pi},
\end{equation}
with $\bm{\psi}_2(\xi)$ defined in \eqref{eq:chi-stacked} and \eqref{eq:block-structure}.
\end{theorem}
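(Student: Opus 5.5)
The plan is to prove Theorem~\ref{thm:kernel} by pure linear algebra: first write $\mathcal K$ as a bilinear form in $\overline{\bm\psi(\xi)}$ and $\bm\varphi(z)$ with matrix $\bm{\mathcal M}^{-1}$, then invert $\bm{\mathcal M}$ blockwise using \eqref{eq:M-upper-tri} and compare with $\mathcal K_0$.

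\emph{Step 1 (change of basis).} Along the natural path, $\Phi_j=\Phi_{\vec n_j}$ is monic of degree $j$. Hence the vector $\bm\Phi(z)=(\Phi_0,\dots,\Phi_{n-1})^\top$ satisfies $\bm\Phi=\bm L\bm\varphi$ with $\bm L$ lower triangular and invertible.

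For the type I forms, $\Psi_j=\Psi_{\vec n_{j+1}}=\sum_k\Lambda_k\omega^{(k)}$ with $\deg\Lambda_k\le (\vec n_{j+1})_k-1\le n_k-1$. Since $\{\varphi_0,\dots,\varphi_{m-1}\}$ spans the polynomials of degree $<m$, each $\Psi_j$ lies in the span of the entries of $\bm\psi$. So there is a matrix $\bm R$ with $\bm\Psi=\bm R\bm\psi$.

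Lemma~\ref{lem: orth + degeneration} gives
\[
\int_{\TT}\bm\Phi\,\overline{\bm\Psi}^\top\tfrac{|\dd z|}{2\pi}=\bm L\,\bm{\mathcal M}\,\overline{\bm R}^\top=\bm I .
\]
Therefore $\overline{\bm R}^\top\bm L=\bm{\mathcal M}^{-1}$, and
\[
\mathcal K(z,\xi)=\overline{\bm\Psi(\xi)}^\top\bm\Phi(z)=\overline{\bm\psi(\xi)}^\top\bm{\mathcal M}^{-1}\bm\varphi(z).
\]
Invertibility of $\bm{\mathcal M}$ (and hence of $\bm{\mathcal M}_{22}$, since $\det\bm{\mathcal M}=\det\bm{\mathcal H}_1\det\bm{\mathcal M}_{22}$) follows from perfectness together with Remark~\ref{rem 1}: $\bm{\mathcal M}$ is obtained from the defining moment system by invertible triangular changes of basis.

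\emph{Step 2 (block inversion).} From \eqref{eq:M-upper-tri},
\[
\bm{\mathcal M}^{-1}=\begin{pmatrix}\bm{\mathcal H}_1^{-1} & -\bm{\mathcal H}_1^{-1}\bm{\mathcal M}_{12}\bm{\mathcal M}_{22}^{-1}\\ \bm 0 & \bm{\mathcal M}_{22}^{-1}\end{pmatrix}.
\]
This gives
\[
\mathcal K=\overline{\bm\psi_1}^\top\bm{\mathcal H}_1^{-1}\bm\varphi_1+\bigl(\overline{\bm\psi_2}^\top-\overline{\bm\psi_1}^\top\bm{\mathcal H}_1^{-1}\bm{\mathcal M}_{12}\bigr)\bm{\mathcal M}_{22}^{-1}\bm\varphi_2 .
\]
Writing $\mathcal K_0$ in block form as in \eqref{K0 block},
\[
\mathcal K_0=\overline{\bm\psi_1(\xi)}^\top\bm{\mathcal H}_1^{-1}\bm\varphi_1(z)+\omega^{(1)}(\xi)\overline{\bm\varphi_2(\xi)}^\top\bm{\mathcal H}_2^{-1}\bm\varphi_2(z).
\]
Inserting $\bm{\mathcal H}_2^{-1}=\bm{\mathcal H}_2^{-1}\bm{\mathcal M}_{22}\bm{\mathcal M}_{22}^{-1}$ in the second term, the difference becomes
\[
\mathcal K-\mathcal K_0=\bigl(\overline{\bm\psi_2}^\top-\overline{\bm\psi_1}^\top\bm{\mathcal H}_1^{-1}\bm{\mathcal M}_{12}-\omega^{(1)}(\xi)\overline{\bm\varphi_2}^\top\bm{\mathcal H}_2^{-1}\bm{\mathcal M}_{22}\bigr)\bm{\mathcal M}_{22}^{-1}\bm\varphi_2(z).
\]

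\emph{Step 3 (identify the projection).} It remains to show that the row vector in parentheses equals $\overline{\bm\pi(\xi)}^\top$. Since $h_j>0$ and $\omega^{(1)}\ge0$ are real,
\[
\overline{\mathcal K_0(s,\xi)}=\sum_{j<n}\overline{\varphi_j(s)}\,\varphi_j(\xi)\,\omega^{(1)}(\xi)/h_j .
\]
Hence
\[
\overline{\int_{\TT}\overline{\mathcal K_0(s,\xi)}\,\bm\psi_2(s)\tfrac{|\dd s|}{2\pi}}=\sum_j\frac{\overline{\varphi_j(\xi)}\,\omega^{(1)}(\xi)}{h_j}\int_{\TT}\varphi_j\,\overline{\bm\psi_2}\tfrac{|\dd s|}{2\pi}.
\]
The last integral is the $j$-th row of $\begin{pmatrix}\bm{\mathcal M}_{12}\\ \bm{\mathcal M}_{22}\end{pmatrix}$. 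Splitting the sum into $j<n_1$ and $j\ge n_1$, and using $\overline{\bm\psi_1}=\omega^{(1)}\overline{\bm\varphi_1}$, gives exactly
\[
\overline{\bm\psi_1}^\top\bm{\mathcal H}_1^{-1}\bm{\mathcal M}_{12}+\omega^{(1)}\overline{\bm\varphi_2}^\top\bm{\mathcal H}_2^{-1}\bm{\mathcal M}_{22}.
\]
This proves \eqref{eq:kernel-reduction}.

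The main obstacle is Step 1: checking that the type I forms along the natural path lie in the span of $\bm\psi$, and that the conjugation and transposition conventions line up, so that biorthogonality yields exactly $\bm{\mathcal M}^{-1}$ rather than its conjugate or transpose. I would track this carefully by computing the $(k,m)$ entry of $\bm L\bm{\mathcal M}\overline{\bm R}^\top$ directly. The rest is routine block algebra.
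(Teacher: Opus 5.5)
Your proof is correct and follows the paper's route: the full-matrix representation $\mathcal K(z,\xi)=\overline{\bm\psi(\xi)}^\top\bm{\mathcal M}^{-1}\bm\varphi(z)$, then the block inverse from \eqref{eq:M-upper-tri}, then the identification of the bracketed row vector with $\overline{\bm\pi(\xi)}^\top$, with conjugations tracked consistently with the statement. The only difference is in Step~1. There you get the representation directly from $\bm L\bm{\mathcal M}\overline{\bm R}^\top=\bm I$ rather than from the uniqueness-of-reproducing-kernel argument of Lemma~\ref{lem:MOPUC-kernel-uniqueness}; this is equally valid and also gives invertibility of $\bm{\mathcal M}$ directly from biorthogonality.
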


To illustrate the explicit formulae obtained above and to further investigate
zeros of type II MOPUC, we consider weights given by characteristic functions
of arcs of the unit circle.  This is a natural class of examples in the
Toeplitz/OPUC setting, whose connection with random-matrix gap probabilities
is well known.  For instance, if \(U\) is drawn from CUE, then the probability
that all eigenangles of \(U\) lie in a measurable set \(B\subset[0,2\pi)\) is
the Toeplitz determinant generated by \(\chi_B\); see
\cite{Mehta04,CharlierClaeys15}.  Similarly, the GUE bulk gap probability on
\((0,2s)\) is obtained as the large-\(n\) limit of the \(n\times n\) Toeplitz
determinant generated by the characteristic function of the arc
\(2s/n\leq \theta\leq 2\pi-2s/n\); see \cite{Krasovsky04}
and also \cite{Marchal20}.

In the setting of multiple orthogonality on the unit circle, systems of
measures supported on arcs were considered by M\'inguez Ceniceros and
Van Assche in \cite{MCVA08}.  Recall that, for ordinary OPUC associated
with a positive measure on \(\mathbb T\) with infinite support, all zeros of
the corresponding monic orthogonal polynomials lie in the open unit disk.
The examples in \cite{MCVA08} show that the zero behavior of type II MOPUC
can differ substantially from this classical picture.  For two disjoint arcs
\cite[Example~1]{MCVA08}, the zeros appear to lie on curves connecting the
endpoints of the supporting arcs, rather than accumulating close to the arcs
themselves.  For overlapping arcs \cite[Example~2]{MCVA08}, the zeros may
even leave the unit disk.  In the case where one measure is supported on the
full circle and the other on a single arc, the type II polynomial has a zero
of large multiplicity at the origin, while the remaining zeros are governed
by the geometry of the arc.  In the diagonal case, this phenomenon is made
explicit in \cite[Proposition~3.1]{MCVA08}, where the nontrivial factor is
expressed in terms of a Legendre polynomial.

The examples below are in the same spirit, but are chosen so that our general formulae lead to completely explicit expressions.  In
particular, they make visible a simple resonance mechanism by which zeros
of type II MOPUC can escape to infinity.

Suppose the multi-index $\vec{n}$ is given by
\begin{equation} \label{multi-index MOPUC eg 1}
\vec{n} = (n-r, 1, \cdots, 1) \in \N^{r+1},
\end{equation}
and the weights of orthogonality on $\mathbb T \ni z = e^{i \theta}$ are given by
\begin{equation} \label{weights MOPUC eg 1}
\omega^{(1)}(e^{i\theta}) := 1,\quad
\omega^{(j+1)}(e^{i\theta}) := \mathbf{1}_{E_j}(\theta),\quad
E_j=[\alpha_j,\beta_j] \subseteq [0, 2\pi), \quad j=1,\cdots,r,
\end{equation}
where we assumed the interval of angles $E_1, \cdots, E_r$ are pairwise disjoint with equal length $\Delta$, i.e., 
\[
\Delta := |E_j| = \beta_j - \alpha_j, \quad j = 1, \cdots, r.
\]
We also denote the center of each angle by 
\[
\theta_j := \frac{\alpha_j + \beta_j}{2},
\]
and denote the center of the corresponding subarc of $\mathbb T$ by 
\begin{align} \label{defn of x_j}
x_j = e^{i \theta_j}.
\end{align}
Then, one can obtain the following corollaries to express the corresponding MOPUC, $\Phi_{\vec{n}}(z)$ and $\Psi_{\vec{n}}(z)$, and their kernel, using 
\begin{align}\label{defn of a_l}
a_\ell = \begin{dcases}
\frac{1}{\ell \pi} \sin \left( \frac{\ell \Delta}{2} \right), & \ell \neq 0,\\
\frac{\Delta}{2 \pi}, & \ell = 0,
\end{dcases}
\end{align}
and the degree $k$ elementary symmetric polynomial,
\begin{equation}\label{defn of esp}
e_k(x_1, \dots, x_n) = \sum_{1 \leq j_1 < j_2 < \dots < j_k \leq n} x_{j_1} x_{j_2} \dots x_{j_k}, \quad 1 \leq k \leq n.
\end{equation}
In the following, we also use the notation 
\begin{equation}\label{eq: lq}
l_q := n-r+q-1,    \qquad 1\leq q \leq r. 
\end{equation} 
\begin{corollary}\label{cor: MOPUC type II eg 1} Assume $\Delta \notin \pi \mathbb Q$.
The type~II MOPUC defined by \eqref{multi-index MOPUC eg 1} and \eqref{weights MOPUC eg 1} under the equal-length assumption is expressed by
\begin{align} \label{MOPUC type II eg 1}
\Phi_{\vec{n}}(z) &= z^n - \sum_{q=1}^r \left( \frac{a_n}{a_{l_q}} (-1)^{r-q} e_{r-q+1}(x_1, x_2, \cdots, x_r) \right) z^{l_q}
\end{align}
where $a_\ell$ and $l_q$ are as in \eqref{defn of a_l} and \eqref{eq: lq}, and $x_j$ is defined in \eqref{defn of x_j}.
Moreover, $\Phi_{\vec{n}}(z)$ has a zero of multiplicity $n-r$ at the origin and $r$ zeros on the complex plane, say $\zeta_1, \zeta_2, \cdots, \zeta_r$. We have
\begin{align} \label{eg 1 escaping}
\max_{1\leq j \leq r} |\zeta_j| \geq \left( \left| \frac{n-r+q-1}{n} \frac{\sin \left(\frac{n\Delta}{2} \right)}{\sin \left(\frac{(n-r+q-1) \Delta}{2} \right)}\right| \frac{|e_{r-q+1}(x_1, \cdots, x_r)|}{\binom{r}{q-1}}\right)^{\frac{1}{r-q+1}}
\end{align}
for $q = 1, \cdots, r$. In particular, $q = 1$ in \eqref{eg 1 escaping} gives
\begin{equation}\label{maxzej}
        \max_{1\leq j \leq r} |\zeta_j| \geq \left| \frac{n-r}{n} \frac{\sin \left(\frac{n\Delta}{2} \right)}{\sin \left(\frac{(n-r) \Delta}{2} \right)}\right|^{\frac1r}.
\end{equation}

\end{corollary}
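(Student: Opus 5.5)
The plan is to specialize Theorem~\ref{thm:typeII} to the data \eqref{multi-index MOPUC eg 1}--\eqref{weights MOPUC eg 1} and evaluate every ingredient in closed form.

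\textbf{Step 1: the ingredients.} Since $\omega^{(1)}\equiv 1$, the monic OPUC are $\varphi_k(z)=z^k$ with $h_k=1$. Here $n_1=n-r$, so the companion part of $\bm\varphi$ is
\[
\bm\varphi_2(z)=(z^{l_1},\dots,z^{l_r})^\top,\qquad l_q=n-r+q-1 .
\]
Each companion weight carries exactly one condition, so $\bm\psi_2(z)=(\mathbf 1_{E_1}(\theta),\dots,\mathbf 1_{E_r}(\theta))^\top$. The basic computation is
\[
\frac{1}{2\pi}\int_{\alpha_j}^{\beta_j}e^{i\ell\theta}\,\dd\theta=e^{i\ell\theta_j}a_\ell=a_\ell\,x_j^{\ell},
\]
which holds for $\ell\neq 0$ and also for $\ell=0$, with $a_\ell$ as in \eqref{defn of a_l}. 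It gives the matrix and vector
\[
(\bm{\mathcal M}_{22})_{q,j}=a_{l_q}x_j^{l_q},\qquad (\bm u_2)_j=a_n x_j^n .
\]
Thus $\bm{\mathcal M}_{22}=\operatorname{diag}(a_{l_1},\dots,a_{l_r})\,V$, where $V_{q,j}=x_j^{n-r}x_j^{q-1}$.

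\textbf{Step 2: invertibility.} The matrix $V$ is a Vandermonde matrix in the $x_j$ multiplied on the right by $\operatorname{diag}(x_j^{n-r})$. Disjoint arcs have distinct centers on $\TT$, so the $x_j$ are distinct and $V$ is invertible. The assumption $\Delta\notin\pi\mathbb Q$ guarantees $\sin(\ell\Delta/2)\neq0$ for every $\ell\ge1$, and $a_0=\Delta/2\pi\neq0$. Hence all $a_{l_q}\neq 0$, and also $a_n\neq 0$.

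\textbf{Step 3: the coefficients.} Write $\bm u_2^\top\bm{\mathcal M}_{22}^{-1}\bm\varphi_2(z)=\sum_q c_q z^{l_q}$, where $c^\top\bm{\mathcal M}_{22}=\bm u_2^\top$. This means
\[
\sum_{q}c_q a_{l_q}x_j^{n-r+q-1}=a_n x_j^n\quad\text{for all } j .
\]
Put $d_q=c_q a_{l_q}/a_n$ and cancel $x_j^{n-r}$. The system becomes $x_j^r=\sum_q d_q x_j^{q-1}$ for $j=1,\dots,r$. So $z^r-\sum_q d_q z^{q-1}$ is the monic degree-$r$ polynomial vanishing at $x_1,\dots,x_r$, namely $\prod_j(z-x_j)$. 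Vieta's formulas then give $d_q=(-1)^{r-q}e_{r-q+1}(x_1,\dots,x_r)$, and Theorem~\ref{thm:typeII} yields \eqref{MOPUC type II eg 1}. Solving the Vandermonde system through the node polynomial avoids inverting $V$ explicitly, and this is the step where the argument could otherwise become messy.

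\textbf{Step 4: the zeros.} From \eqref{MOPUC type II eg 1}, $\Phi_{\vec n}(z)=z^{n-r}R(z)$, where
\[
R(z)=z^r-\sum_q c_q z^{q-1}.
\]
This exhibits the zero of multiplicity $n-r$ at the origin; it is exactly $n-r$ because $c_1\neq0$, since $a_n\neq0$ and $e_r(x)\neq0$. Let $\zeta_1,\dots,\zeta_r$ be the zeros of $R$. By Vieta, $|c_q|=|e_{r-q+1}(\zeta_1,\dots,\zeta_r)|$. The elementary symmetric polynomial $e_{r-q+1}$ has $\binom{r}{r-q+1}=\binom{r}{q-1}$ terms, each of modulus at most $(\max_j|\zeta_j|)^{r-q+1}$, so
\[
|c_q|\le\binom{r}{q-1}\Bigl(\max_j|\zeta_j|\Bigr)^{r-q+1}.
\]
Substituting
\[
|c_q|=\frac{l_q\,|\sin(n\Delta/2)|}{n\,|\sin(l_q\Delta/2)|}\,\bigl|e_{r-q+1}(x)\bigr|
\]
gives \eqref{eg 1 escaping}. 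In the degenerate case $l_q=0$ (only when $n=r$, $q=1$), the ratio is read as $a_n/a_0$, consistent with the limit of the displayed formula. For $q=1$, $|e_r(x)|=\prod_j|x_j|=1$ and $\binom r0=1$, which gives \eqref{maxzej}.
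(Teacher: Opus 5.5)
Your proposal is correct and follows essentially the same route as the paper. You specialize Theorem~\ref{thm:typeII} and solve $\bm\gamma^\top\bm{\mathcal M}_{22}=\bm u_2^\top$ by recognizing the monic node polynomial $\prod_j(z-x_j)$, which the paper phrases as equality of the two degree-$(r-1)$ interpolants $Y$ and $Z$; you then apply Vieta to $R(z)$ together with the trivial bound $|e_s(\zeta)|\le\binom{r}{s}(\max_j|\zeta_j|)^s$. Beyond this, you explicitly check three things the paper leaves implicit: that $\bm{\mathcal M}_{22}$ is invertible, that $R(0)\neq0$ so the multiplicity at the origin is exactly $n-r$, and that the degenerate case $l_q=0$ should be read as $a_n/a_0$.
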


\begin{remark}\label{remark MOPUC eg 1} Here we make some observations about the zeros of arc-indicator type II MOPUC under the equal-length assumption.  \phantom{newline}
\begin{enumerate}
\item The estimate \eqref{eg 1 escaping} gives a simple mechanism for zero escape in the equal-length regime. Suppose that $r$ and $\Delta \notin \pi \mathbb{Q}$ are fixed. Since the sequence
$
\{m\frac{\Delta}{2}\pmod {2\pi}\}_{ m\in\mathbb N}
$
is dense in $[0,2\pi)$, we may choose $m_k$ such that
$\frac{m_k\Delta}{2}\to 0 \pmod {2\pi}$. In particular, $
\sin\left(\frac{m_k\Delta}{2}\right)\to 0.
$
Setting $n_k=m_k+r$, we obtain
$
\sin\left(\frac{(n_k-r)\Delta}{2}\right)\to 0.
$
Moreover,
$
\sin\left(\frac{n_k\Delta}{2}\right)
\to
\sin\left(\frac{r\Delta}{2}\right),
$
which is nonzero because $\Delta/\pi\notin\mathbb Q$. If necessary, choose a tail of this subsequence and relabel so that for all $k$ it holds that $ \left|\sin\left(\frac{n_k\Delta}{2}\right)\right|\geq c>0$ for some positive constant $c$. Then the lower bound obtained from the case $q=1$ in \eqref{eg 1 escaping} tends to infinity, and thus at least one zero of $\Phi_{\vec n_k}$ escapes to infinity. 
\item At the same time, for a fixed finite degree $n$, one can also produce the same effect by tuning the geometric parameters of the arcs. Namely, one may choose the common arc length $\Delta$ so that $\sin\left(\frac{(n-r)\Delta}{2}\right)$ is very small while $\sin\left(\frac{n\Delta}{2}\right)$ remains nonzero. In this finite-$n$ interpretation, the natural parameters are the arc centers $x_1,\ldots,x_r$, the number of arcs $r$, and the common arc length $\Delta$;  see Figures~\ref{fig:mopuc-arc-supports-n8-I} and~\ref{fig:mopuc-zero-escape-n8-I}.
\item In \cite[Example~2]{MCVA08}, it is observed that some zeros of the
type II MOPUC of fixed degree lie outside the unit disk.  However, their
overlapping-arc example does not address the possibility that, under a
limiting choice of geometric parameters, some zeros may tend to infinity.
This phenomenon appears to be observed here for the first time in the MOPUC
setting.  
\end{enumerate}
\end{remark}

\begin{corollary} \label{cor: MOPUC type I eg 1}
The type I linear form $\Psi_{\vec{n}}(z)$ defined by \eqref{multi-index MOPUC eg 1} and \eqref{weights MOPUC eg 1} under the equal-length assumption is expressed by
\begin{align} \label{MOPUC type I eg 1}
\Psi_{\vec{n}}(z) &= \sum_{j=1}^r c_j \mathbf{1}_{E_j}(\arg z) + \sum_{l=0}^{n-r-1} \left( \sum_{k=1}^r \frac{c_k \, a_{l}}{x_k^l} \right) z^l
\end{align}
where
\begin{align} \label{eg 1 defn of cj}
c_j = \frac{(-1)^{r-1} x_j^{n-2}}{a_{n-1}} \frac{\prod_{m=1}^r x_m}{\prod_{\substack{m=1 \\ m \neq j}}^r (x_j - x_m)}
\end{align}
with $a_\ell$ and $x_j$ defined in \eqref{defn of a_l} and \eqref{defn of x_j} respectively.
\end{corollary}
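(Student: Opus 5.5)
The plan is to verify the formula directly from Definition~\ref{def:typeI MOPUC} by solving the defining linear system. Uniqueness in a perfect system (equivalently, Theorem~\ref{thm:typeI}, since $\overline{\bm{\mathcal M}}$ is invertible) then identifies the solution with $\Psi_{\vec n}$. By Remark~\ref{rem 1} we may test against $z^k$ rather than $\varphi_k$. Here $\omega^{(1)}\equiv 1$, so $\varphi_k(z)=z^k$. For the multi-index $(n-r,1,\dots,1)$ the linear form has the shape
\[
\Psi_{\vec n}(z)=\Lambda_1(z)+\sum_{j=1}^r c_j\mathbf 1_{E_j}(\arg z),\qquad \Lambda_1(z)=\sum_{l=0}^{n-r-1}\lambda_l z^l ,
\]
with constants $c_j$. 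The unknowns are the $n$ numbers $\lambda_0,\dots,\lambda_{n-r-1},c_1,\dots,c_r$.

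The first step is to compute the moments. We have $\int_{\TT} z^l\overline{z^k}\,\frac{|\dd z|}{2\pi}=\delta_{lk}$. For the arcs,
\[
\int_{\alpha_j}^{\beta_j}e^{-ik\theta}\frac{\dd\theta}{2\pi}=e^{-ik\theta_j}\,\frac{\sin(k\Delta/2)}{k\pi}=a_k\,x_j^{-k},
\]
which also holds for $k=0$ with $a_0=\Delta/(2\pi)$. The orthogonality conditions \eqref{eq:typeI MOPC-orth} therefore split into two groups:
\[
\lambda_k+\sum_{j=1}^r c_j a_k x_j^{-k}=0\quad (0\le k\le n-r-1),\qquad \sum_{j=1}^r c_j a_k x_j^{-k}=\delta_{k,n-1}\quad (n-r\le k\le n-1).
\]
The first group expresses the coefficients of $\Lambda_1$ in terms of the $c_j$. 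Note that it gives $\lambda_l=-\sum_k c_k a_l x_k^{-l}$. I will check this sign against \eqref{MOPUC type I eg 1} carefully, since with the conjugation convention of \eqref{eq:typeI MOPC-orth} a minus sign appears to be forced. The second group is an $r\times r$ system for the $c_j$ alone.

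The second step, which is the main one, is to solve that $r\times r$ system. For $n-r\le k\le n-2$ the right-hand side vanishes, so the factor $a_k$ can be dropped there, provided $a_{n-1}\neq0$ in the last equation. This holds because $\sin((n-1)\Delta/2)\ne0$ when $\Delta\notin\pi\mathbb Q$; I would record this as the needed nondegeneracy. Set $y_j=x_j^{-1}$ and $d_j=c_j y_j^{\,n-r}$. The system becomes
\[
\sum_j d_j y_j^{m}=\delta_{m,r-1}/a_{n-1},\qquad m=0,\dots,r-1 .
\]
This is a transposed Vandermonde system. The $y_j$ are distinct because the arcs are disjoint, so the centers $x_j$ are distinct. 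Its solution is given by the leading coefficients of the Lagrange basis polynomials:
\[
d_j=\frac{1}{a_{n-1}\prod_{m\ne j}(y_j-y_m)}.
\]

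The final step is algebraic simplification. Using $y_j-y_m=(x_m-x_j)/(x_jx_m)$, we get
\[
\prod_{m\ne j}(y_j-y_m)=(-1)^{r-1}\frac{\prod_{m\ne j}(x_j-x_m)}{x_j^{r-1}\prod_{m\ne j}x_m}.
\]
Hence
\[
c_j=x_j^{n-r}d_j=\frac{(-1)^{r-1}x_j^{n-1}\prod_{m\ne j}x_m}{a_{n-1}\prod_{m\ne j}(x_j-x_m)}=\frac{(-1)^{r-1}x_j^{n-2}\prod_{m}x_m}{a_{n-1}\prod_{m\ne j}(x_j-x_m)},
\]
which is \eqref{eg 1 defn of cj}. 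Substituting the $c_j$ back into the formula for $\lambda_l$ gives the polynomial part of \eqref{MOPUC type I eg 1}. The only delicate points are the nondegeneracy $a_{n-1}\ne0$, the distinctness of the $x_j$, and the sign bookkeeping in $\lambda_l$ noted above.
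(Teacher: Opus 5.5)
Your derivation of the $c_j$ is correct and is essentially the paper's argument. The paper passes through Theorem~\ref{thm:typeI} and the block inverse of $\overline{\bm{\mathcal M}}^{\top}$, solving $\bm{\mathcal M}_{22}\overline{\bm c}^{\top}=\bm e_r$ as a Vandermonde system in the $x_j$ and conjugating at the end. You instead solve the conjugate system directly in $y_j=\overline{x_j}=x_j^{-1}$, which is the same linear algebra without the final conjugation.

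The gap is in your last step. From the first $n-r$ conditions you correctly obtain $\lambda_l=-\sum_k c_k a_l x_k^{-l}$. You then assert that substituting the $c_j$ gives the polynomial part of \eqref{MOPUC type I eg 1}, which carries a plus sign. Calling this ``sign bookkeeping'' does not reconcile the two, and they cannot be reconciled.

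Your sign is the right one, and it is not an artifact of the conjugation convention. Those conditions say precisely that $\Lambda_1$ is minus the orthogonal projection of $\sum_j c_j\mathbf 1_{E_j}$ onto $\mathcal P^{\TT}_{n-r-1}$ in $L^2(\TT,\frac{|\dd z|}{2\pi})$. The paper's own proof agrees with you: $\bm b=-\bm c\,\overline{\bm{\mathcal M}_{12}}^{\top}$ gives \eqref{eg 1 expression 3}, namely $b_l=-\sum_k c_k\mu_{E_k}(-l)=-\sum_k c_k a_l x_k^{-l}$, using $a_{-l}=a_l$. So the plus sign in the printed statement is a typo.

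For a concrete check, take $r=1$. Then $c_1=x_1^{n-1}/a_{n-1}$, and the printed formula gives $\int_{\TT}\Psi_{\vec n}(z)\,\frac{|\dd z|}{2\pi}=c_1a_0+c_1a_0=2a_0x_1^{n-1}/a_{n-1}\neq0$. This violates \eqref{eq:typeI MOPC-orth} at $k=0\le n-2$.

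You should therefore conclude with the corrected formula, in which the coefficient of $z^l$ is $-\sum_{k=1}^r c_k a_l x_k^{-l}$. The same sign change propagates to \eqref{MOPUC type I eg 2}.

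A minor point: uniqueness here requires $\det\bm{\mathcal M}_{22}\neq0$, i.e.\ $a_k\neq0$ for every $n-r\le k\le n-1$, not just $a_{n-1}\neq0$. Dropping the factors $a_k$ from the homogeneous equations is an equivalence only under that condition, although your candidate solves the original system regardless. All of these conditions hold when $\Delta\notin\pi\mathbb Q$.
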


\begin{corollary} \label{cor: MOPUC kernel eg 1}
Let $\mathcal K$ be the Christoffel-Darboux kernel for MOPUC defined by \eqref{multi-index MOPUC eg 1} and \eqref{weights MOPUC eg 1} under the equal-length assumption, and let $\mathcal K_0$ be the Christoffel-Darboux kernel for OPUC defined by $\omega^{(1)}(z) = 1$ on $\TT$. Then, one has
\begin{equation} \label{MOPUC kernel eg 1}
    \mathcal K(z,\xi) - \mathcal K_0(z,\xi) = \sum_{p=1}^r \left( \mathbf{1}_{E_p} (\arg \xi) - \sum_{j=0}^{n-1} \xi^{-j} a_j x_p^j \right) \left( \frac{x_p^{-(n-r)}}{\prod_{\substack{m=1 \\ m \neq p}}^r (x_p - x_m)} \sum_{k=1}^r (-1)^{r-k} e_{r-k}(\hat{x}_p) \frac{z^{l_k}}{a_{l_k}}\right) 
\end{equation}
where $x_j$, $a_\ell$, and $l_q$, are defined in \eqref{defn of x_j}, \eqref{defn of a_l} and \eqref{eq: lq}. Here $e_s (\hat{x}_p)$ denotes the elementary symmetric polynomial of degree $s$ in the $r-1$ variables $x_1, x_2, \cdots, x_{p-1}$, $x_{p+1}, \cdots, x_r$.
\end{corollary}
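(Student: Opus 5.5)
We apply Theorem~\ref{thm:kernel} with $\omega^{(1)}\equiv 1$, so everything reduces to evaluating the three ingredients on the right of \eqref{eq:kernel-reduction}. After the relabeling $\omega^{(p+1)}=\mathbf 1_{E_p}$, the companion weights are $\mathbf 1_{E_1},\ldots,\mathbf 1_{E_r}$, and the multi-index $(n-r,1,\ldots,1)$ makes every companion block one-dimensional. For the Lebesgue reference weight the monic OPUC are $\varphi_k(z)=z^k$ with $h_k=1$. Consequently
\[
\bm\varphi_2(z)=(z^{l_1},\ldots,z^{l_r})^\top,\qquad \bm\psi_2(\xi)=(\mathbf 1_{E_1}(\arg\xi),\ldots,\mathbf 1_{E_r}(\arg\xi))^\top,\qquad \mathcal K_0(z,\xi)=\sum_{j=0}^{n-1}z^j\xi^{-j},
\]
where $\overline{\xi^j}=\xi^{-j}$ on $\TT$ and $l_q$ is as in \eqref{eq: lq}. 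The basic computation used throughout is
\[
\int_{E_p}e^{i\ell\theta}\,\frac{\dd\theta}{2\pi}=x_p^{\ell}a_\ell ,\qquad \ell\in\Z,
\]
where $a_{-\ell}=a_\ell$ is real. This follows by centering the arc at $\theta_p$ and integrating $e^{i\ell\phi}$ over $[-\Delta/2,\Delta/2]$, which gives the formula \eqref{defn of a_l}.

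First we compute $\bm{\mathcal M}_{22}$ and its inverse. Its $(k,p)$ entry is $\int_{E_p}e^{il_k\theta}\frac{\dd\theta}{2\pi}=a_{l_k}x_p^{l_k}$. This gives the factorization
\[
\bm{\mathcal M}_{22}=\operatorname{diag}(a_{l_1},\ldots,a_{l_r})\,W\,\operatorname{diag}(x_1^{n-r},\ldots,x_r^{n-r}),\qquad W_{kp}=x_p^{k-1}.
\]
Here $W$ is a Vandermonde matrix, invertible because the arcs are disjoint and hence the $x_p$ are distinct. The $a_{l_k}$ are nonzero by the standing perfectness/invertibility assumption; this holds for instance when $\Delta\notin\pi\mathbb Q$. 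We then use the classical formula for the inverse Vandermonde matrix,
\[
(W^{-1})_{pk}=\frac{(-1)^{r-k}e_{r-k}(\hat x_p)}{\prod_{m\neq p}(x_p-x_m)}.
\]
It follows from expanding the Lagrange basis polynomial $\prod_{m\ne p}(z-x_m)/(x_p-x_m)$ in powers of $z$. Combining the factors, the $p$-th entry of $\bm{\mathcal M}_{22}^{-1}\bm\varphi_2(z)$ is exactly
\[
\frac{x_p^{-(n-r)}}{\prod_{m\neq p}(x_p-x_m)}\sum_{k=1}^r(-1)^{r-k}e_{r-k}(\hat x_p)\frac{z^{l_k}}{a_{l_k}},
\]
which is the second bracket in \eqref{MOPUC kernel eg 1}.

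Next we compute the projection \eqref{eq:q-def}. We have $\overline{\mathcal K_0(s,\xi)}=\sum_{j=0}^{n-1}s^{-j}\xi^{j}$, so the $p$-th component of $\bm\pi(\xi)$ is
\[
\mathbf 1_{E_p}(\arg\xi)-\sum_{j=0}^{n-1}\xi^j\int_{E_p}e^{-ij\theta}\frac{\dd\theta}{2\pi}=\mathbf 1_{E_p}(\arg\xi)-\sum_{j=0}^{n-1}\xi^{j}a_jx_p^{-j}.
\]
Taking complex conjugates uses only that $a_j$ and the indicator are real and that $|x_p|=|\xi|=1$. The result is $\overline{\pi_p(\xi)}=\mathbf 1_{E_p}(\arg\xi)-\sum_{j}\xi^{-j}a_jx_p^{j}$, which is the first bracket in \eqref{MOPUC kernel eg 1}. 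Writing $\overline{\bm\pi(\xi)}^\top\bm{\mathcal M}_{22}^{-1}\bm\varphi_2(z)=\sum_p\overline{\pi_p(\xi)}\,(\bm{\mathcal M}_{22}^{-1}\bm\varphi_2(z))_p$ then gives the claim.

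The only real obstacle is bookkeeping. We must keep the orientation of the conjugations straight, namely which of $\xi^{\pm j}$ and $x_p^{\pm j}$ appears, and get the row/column convention of $\bm{\mathcal M}_{22}$ right: rows are indexed by $\varphi_2$ and columns by $\psi_2$, so the inverse acts with the $p$ index on the left. We must also check the sign and index conventions in the inverse Vandermonde formula. We will verify these on the case $r=1$, where the formula must reduce to $(\mathbf 1_{E_1}-\cdots)\,x_1^{-(n-1)}z^{n-1}/a_{n-1}$.
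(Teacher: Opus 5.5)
Your proposal is correct and follows the paper's proof: apply Theorem~\ref{thm:kernel}, compute $\bm\pi(\xi)$ using $\mu_{E_p}(-j)=a_jx_p^{-j}$, and invert $\bm{\mathcal M}_{22}$ through the factorization diagonal $\times$ Vandermonde $\times$ diagonal together with the classical inverse-Vandermonde formula. Your write-up is slightly more careful than the paper's in two places: you make the conjugation of $\pi_p(\xi)$ explicit, which the paper leaves implicit, and your convention $W_{kp}=x_p^{k-1}$ (nodes indexing columns) is the one that matches the inverse formula used, whereas the paper states $V_{p,q}=x_p^{q-1}$.
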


Moreover, if we further assume that the weights $\omega^{(2)}(z), \cdots, \omega^{(r+1)}(z)$ are assigned on the equal-length and equally-spaced subarcs of $\mathbb T$, i.e., we assume that each center of subarcs of $\TT$ corresponding to $E_1, \cdots, E_r$ are given by
\begin{align} \label{equal space MOPUC eg 2}
x_j = x_1 \rho^{j-1}, \quad \rho = e^{\frac{2\pi i}{r}}, \quad j=1, \cdots, r
\end{align}
we can simplify the above corollaries.

\begin{corollary} \label{cor: MOPUC type II eg 2}
Under the equally-spaced assumption \eqref{equal space MOPUC eg 2}, suppose that $a_n a_{n-r}\neq 0$. Then \eqref{MOPUC type II eg 1} simplifies to
\begin{equation} \label{MOPUC type II eg 2}
\Phi_{\vec n}(z)=z^n-\left(\frac{a_n}{a_{n-r}}x_1^r\right)z^{n-r}=z^{n-r}\left(z^r-\frac{a_n}{a_{n-r}}x_1^r\right).
\end{equation}
Hence $\Phi_{\vec n}$ has a zero of multiplicity $n-r$ at the origin and $r$ simple nonzero zeros
\begin{equation}\label{zek}
    \zeta_k=\left|\frac{n-r}{n}\frac{\sin\left(\frac{n\Delta}{2}\right)}{\sin\left(\frac{(n-r)\Delta}{2}\right)}\right|^{1/r}\exp\left(i \vartheta_k  \right), \qquad \vartheta_k=\theta_1+\frac{\arg(a_n/a_{n-r})+2\pi k}{r}, \quad k=0,\ldots,r-1.
\end{equation}
Here $\arg(a_n/a_{n-r})\in\{0,\pi\}$, since $a_n/a_{n-r}$ is real.

\end{corollary}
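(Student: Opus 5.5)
Our plan is to specialize formula \eqref{MOPUC type II eg 1} of Corollary~\ref{cor: MOPUC type II eg 1} to the equally spaced centers \eqref{equal space MOPUC eg 2}. The only input needed is the elementary symmetric polynomials of the $r$-th roots of unity. Since $x_j = x_1\rho^{j-1}$, the numbers $x_1,\dots,x_r$ are exactly the roots of $t^r - x_1^r$. Hence
\[
\prod_{j=1}^r (t-x_j) = t^r - x_1^r ,
\]
so $e_k(x_1,\dots,x_r)=0$ for $1\le k\le r-1$ and $e_r(x_1,\dots,x_r) = (-1)^{r-1}x_1^r$.

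In the sum in \eqref{MOPUC type II eg 1}, the index $q$ corresponds to $e_{r-q+1}$. Therefore only $q=1$ survives, with $l_1 = n-r$. Its coefficient is
\[
\frac{a_n}{a_{n-r}}(-1)^{r-1}e_r = \frac{a_n}{a_{n-r}}(-1)^{r-1}(-1)^{r-1}x_1^r = \frac{a_n}{a_{n-r}}x_1^r ,
\]
which gives \eqref{MOPUC type II eg 2}.

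One caveat is that Corollary~\ref{cor: MOPUC type II eg 1} was stated under $\Delta\notin\pi\mathbb Q$. That hypothesis presumably only guarantees that the relevant $a_\ell$ are nonzero, i.e.\ that $\bm{\mathcal M}_{22}$ is invertible. Here we instead assume $a_na_{n-r}\neq0$. So we need to check that the derivation of \eqref{MOPUC type II eg 1} goes through under this weaker assumption once the vanishing terms are removed. I expect this to be the main subtle point.

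The cleanest route is to verify \eqref{MOPUC type II eg 2} directly. With $\omega^{(1)}=1$ we have $\varphi_k=z^k$. The mixed moments are
\[
\int_{\TT} z^k \mathbf 1_{E_j}\,\frac{|\dd z|}{2\pi} = a_k x_j^k ,
\]
obtained by integrating $e^{ik\theta}$ over an arc of length $\Delta$ centered at $\theta_j$. We need $\Phi(z)=z^n-cz^{n-r}$ to satisfy $\int_{\TT}\Phi\,\mathbf 1_{E_j}=0$ for every $j$; orthogonality to $1,\dots,z^{n-r-1}$ with respect to $\omega^{(1)}$ is automatic. This condition reads
\[
a_n x_j^n - c\,a_{n-r}x_j^{n-r}=0 .
\]
Since $x_j^r = x_1^r$ for all $j$, this holds with $c=a_nx_1^r/a_{n-r}$, using $a_{n-r}\neq0$. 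Uniqueness of the type II MOPUC, which holds under perfectness or whenever $\bm{\mathcal M}_{22}$ is invertible, then identifies $\Phi_{\vec n}$. If a fully self-contained uniqueness argument is wanted, I would note that $\bm{\mathcal M}_{22}$ is the Vandermonde-type matrix $\bigl(a_{l_q}x_j^{l_q}\bigr)$. Its invertibility under the equal-spacing assumption needs a separate check, and I would simply assume it as in the setting of the corollary.

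For the zeros, $z^{n-r}$ gives the origin with multiplicity $n-r$. The remaining zeros solve $z^r = (a_n/a_{n-r})x_1^r$. Since $a_\ell$ is real, write
\[
\frac{a_n}{a_{n-r}} = \Bigl|\frac{a_n}{a_{n-r}}\Bigr| e^{i\arg(a_n/a_{n-r})}, \qquad \arg(a_n/a_{n-r})\in\{0,\pi\}.
\]
Using $x_1^r=e^{ir\theta_1}$, the $r$-th roots are $\zeta_k$ with arguments $\vartheta_k$. Their common modulus is $|a_n/a_{n-r}|^{1/r}$, and \eqref{defn of a_l} gives
\[
\Bigl|\frac{a_n}{a_{n-r}}\Bigr| = \Bigl|\frac{(n-r)\sin(n\Delta/2)}{n\sin((n-r)\Delta/2)}\Bigr| .
\]
These roots are nonzero because $a_n\neq0$, and simple because they are distinct $r$-th roots of a nonzero number.
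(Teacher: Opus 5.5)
Your argument is correct, and its first part is the paper's proof: the paper also notes that $x_1,\dots,x_r$ are the roots of $x^r-x_1^r$. It reads off $e_k(x_1,\dots,x_r)=0$ for $1\le k\le r-1$ and $e_r(x_1,\dots,x_r)=(-1)^{r-1}x_1^r$, keeps only the $q=1$ term of \eqref{MOPUC type II eg 1}, and takes $r$-th roots of $(a_n/a_{n-r})x_1^r$ just as you do.

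What you add is a direct check that $z^n-cz^{n-r}$, with $c=a_nx_1^r/a_{n-r}$, satisfies the defining type II conditions. This bypasses Corollary~\ref{cor: MOPUC type II eg 1} and uses only $a_{n-r}\neq 0$, so the quotients $a_n/a_{l_q}$ with $q\ge 2$, which the paper's substitution multiplies by $e_{r-q+1}=0$, never appear.

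Uniqueness still requires $\det\bm{\mathcal M}_{22}\neq 0$, but the ``separate check'' you defer is immediate from the factorization \eqref{eg 1 M22 decomp}. There the middle factor is a Vandermonde matrix in the distinct nodes $x_j=x_1\rho^{j-1}$, so $\det\bm{\mathcal M}_{22}\neq 0$ if and only if $a_{l_q}\neq 0$ for all $q=1,\dots,r$. This holds under the standing hypothesis $\Delta\notin\pi\mathbb Q$, which by itself already forces $a_na_{n-r}\neq 0$.

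Your caveat is therefore well founded. If one assumed only $a_na_{n-r}\neq 0$ and some $a_{l_q}$ with $q\ge 2$ vanished, the coefficient of $z^{l_q}$ would be unconstrained and $\Phi_{\vec n}$ would not be unique. So the corollary should be read within the setting of Corollary~\ref{cor: MOPUC type II eg 1}, as the paper's substitution proof implicitly does.
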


\begin{remark} \label{remark MOPUC eg 2} Here we make some observations about the zeros of arc-indicator type II MOPUC under the equal-length and equally-spaced assumptions. \phantom{newline}
\begin{enumerate}
\item 
In the equally spaced regime \eqref{equal space MOPUC eg 2}, the escaping mechanism becomes completely explicit. By \eqref{MOPUC type II eg 2},   if $r$ and $\Delta$ are fixed, and if $\{n_k\}_{k\in\mathbb N}$ is a sequence of degrees such that $\sin\left(\frac{(n_k-r)\Delta}{2}\right)\to 0$, while $\left|\sin\left(\frac{n_k\Delta}{2}\right)\right|\geq c>0$, then all $r$ nonzero zeros of $\Phi_{\vec n_k}$ escape to infinity simultaneously.
\item
The same resonance can also be observed at fixed finite $n$. The geometric parameters are the common arc length $\Delta$, the number of arcs $r$. By choosing $\Delta$ so that $\sin\left(\frac{(n-r)\Delta}{2}\right)$ is very small while $\sin\left(\frac{n\Delta}{2}\right)$ remains nonzero, one obtains $r$ large nonzero zeros already at finite degree; see Figures~\ref{fig:mopuc-arc-supports-n8-II} and~\ref{fig:mopuc-zero-escape-n8-II}.
\end{enumerate}
\end{remark}

Using the explicit formula \eqref{MOPUC type II eg 2}, one can also study the set of nontrivial zeros of $\Phi_{\vec n}(z)$ and their asymptotic density as described in the next Proposition.

\begin{proposition}\label{thm:fixed-r-ray-density}
In the equally spaced MOPUC arc example, fix $r\ge 1$ and assume
$0<\Delta<2\pi/r$ and $\Delta/\pi\notin \mathbb Q$.
Write the set of $r$ nontrivial zeros of the degree $n$ polynomial \eqref{MOPUC type II eg 2} as $\mathcal Z^{(n)}_{r}$.
Then, these $r$ zeros of $\Phi_{\vec n}(z)$ are asypmtotically dense in the $2r$ rays on the complex plane, i.e.,
\[
\bigcap_{m=r+1}^{\infty} \overline{\bigcup_{n \geq m} \mathcal Z^{(n)}_{r}} = \bigcup_{\ell=0}^{2r-1}
  \{\varrho \, e^{i(\theta_1+\pi\ell/r)}:\varrho\ge 0\}.
\]
\begin{comment}
For $n>r$, let $\mathcal Z_r$ be the set of nonzero zeros of the corresponding type II 
polynomials $\Phi_{\vec{n}}$, taken over all $n=r+1,r+2,\ldots$. If
$x_1=e^{i\theta_1}$, then
\[
  \overline{\mathcal Z_r}
  =
  \bigcup_{\ell=0}^{2r-1}
  \{\varrho e^{i(\theta_1+\pi\ell/r)}:\varrho\ge 0\}.
\]
\end{comment}
\end{proposition}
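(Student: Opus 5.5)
The plan is to read everything off the closed form \eqref{zek} of Corollary~\ref{cor: MOPUC type II eg 2}. First I check that the corollary applies for every $n\ge r+1$. Since $\Delta/\pi\notin\mathbb Q$, we have $\sin(\ell\Delta/2)\neq 0$ for every integer $\ell\ge 1$, so $a_n a_{n-r}\neq 0$. By \eqref{defn of a_l},
\[
\frac{a_n}{a_{n-r}}=\frac{n-r}{n}\,f(n),\qquad f(n):=\frac{\sin\left(\frac{n\Delta}{2}\right)}{\sin\left(\frac{(n-r)\Delta}{2}\right)}.
\]
Hence the $r$ nontrivial zeros are $\zeta_k=R_n^{1/r}e^{i\vartheta_k}$ with $R_n=\frac{n-r}{n}|f(n)|$. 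Their arguments are $\theta_1+2\pi k/r$ when $f(n)>0$ and $\theta_1+\pi(2k+1)/r$ when $f(n)<0$. So every element of every $\mathcal Z^{(n)}_r$ lies on the union $\mathcal R$ of the $2r$ rays $\{\varrho e^{i(\theta_1+\pi\ell/r)}:\varrho\ge 0\}$. The even-$\ell$ rays are hit exactly when $f(n)>0$, and the odd-$\ell$ rays exactly when $f(n)<0$. Since $\mathcal R$ is closed, the $\limsup$ set on the left-hand side is contained in $\mathcal R$; this gives the inclusion "$\subseteq$".

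For "$\supseteq$" I rewrite $f$ via $\phi_n:=(n-r)\Delta/2$:
\[
f(n)=\frac{\sin(\phi_n+r\Delta/2)}{\sin\phi_n}=\cos\left(\tfrac{r\Delta}{2}\right)+\sin\left(\tfrac{r\Delta}{2}\right)\cot\phi_n .
\]
The hypothesis $0<\Delta<2\pi/r$ gives $0<r\Delta/2<\pi$, so $\sin(r\Delta/2)\neq 0$. The map $\phi\mapsto \cos(r\Delta/2)+\sin(r\Delta/2)\cot\phi$ is therefore a homeomorphism from $(0,\pi)$ onto $\mathbb R$. Because $\Delta/(2\pi)$ is irrational, the sequence $\phi_n \bmod \pi$ is dense in $[0,\pi)$ (Kronecker), and it visits every open subinterval for infinitely many $n$.

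Now fix a ray index $\ell$ and a radius $\varrho\ge 0$. I choose a target $t\in\mathbb R$ with $|t|=\varrho^r$ and with sign matching the parity of $\ell$ (positive for even $\ell$, negative for odd $\ell$). When $\varrho=0$, I instead take $t$ to be a small number of the correct sign, then let it tend to $0$. For any $\varepsilon>0$, the preimage of $(t-\varepsilon,t+\varepsilon)$ (intersected with the correct sign) under the homeomorphism is an open interval. Infinitely many $n$ have $\phi_n \bmod \pi$ in it, so $f(n)\in(t-\varepsilon,t+\varepsilon)$ for arbitrarily large $n$. Since $\frac{n-r}{n}\to 1$, along these $n$ the radius $R_n^{1/r}$ tends to $|t|^{1/r}$. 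The sign of $f(n)$ matches the parity of $\ell$, so among $\zeta_0,\dots,\zeta_{r-1}$ there is a zero with argument exactly $\theta_1+\pi\ell/r$. Consequently $\varrho e^{i(\theta_1+\pi\ell/r)}$ lies in $\overline{\bigcup_{n\ge m}\mathcal Z^{(n)}_r}$ for every $m$, which proves equality.

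The only delicate points are bookkeeping. One is making sure the sign of $a_n/a_{n-r}$ selects the even or odd rays consistently, which is why the argument tracks the sign of $f$. The other is the infinitely-many-$n$ density statement for $\phi_n \bmod \pi$, which follows from Kronecker's theorem. I expect no genuine obstacle beyond these.
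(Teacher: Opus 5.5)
Your proposal is correct and follows essentially the same route as the paper. Both arguments write $a_n/a_{n-r}=\frac{n-r}{n}\bigl(\cos\frac{r\Delta}{2}+\sin\frac{r\Delta}{2}\cot\frac{(n-r)\Delta}{2}\bigr)$, use density of $(n-r)\Delta/2 \bmod \pi$ to make every real number a subsequential limit, and use the reality of this ratio to confine the zeros to the $2r$ rays. Your version is more explicit about details the paper leaves implicit: nonvanishing of $a_na_{n-r}$, matching the sign of the ratio to the parity of the ray, closedness of the ray union, and the case $\varrho=0$.
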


\begin{proof}
Put $k=n-r$. By \eqref{zek}, the nontrivial zeros are the roots of
$z^r=A_{k+r,r}(\Delta)x_1^r$, where
\[
  A_{k+r,r}(\Delta)
  =\frac{k}{k+r}\frac{\sin((k+r)\Delta/2)}{\sin(k\Delta/2)}
  =\frac{k}{k+r}\left(\cos\frac{r\Delta}{2}
    +\sin\frac{r\Delta}{2}\cot\frac{k\Delta}{2}\right).
\]
Since $\Delta/\pi$ is irrational, $k\Delta/2 \pmod {\pi}$ is dense in $(0,\pi)$ . Also
$\sin(r\Delta/2)\ne0$. Thus, the last parenthesis takes every real value
along subsequences of $k$, and since $k/(k+r)\to1$, every $y\in\R$ is a
subsequential limit of $A_{k+r,r}(\Delta)$.

Let $\varrho \ge 0$ and $\ell \in \{0, \ldots, 2r-1\}$, choose a subsequence with
$A_{k_\ell+r,r}(\Delta)\to(-1)^\ell\varrho^r$. 
Then, $z^r \to (-1)^\ell \varrho^r x_1^r$ and the $\ell$-th root satisfies $z_\ell \to \varrho \, e^{i(\theta_1 + \pi \ell/r)}$.
Conversely, $A_{k+r,r}(\Delta)$ is real for all $k$, so every nontrivial zero lies on one of these $2r$ rays.
\end{proof}

Moreover, one can also show that $ \bigcup^{\infty}_{r=1}\bigcap_{m=r+1}^{\infty} \overline{\bigcup_{n \geq m} \mathcal Z^{(n)}_{r}}$ is dense in  $\C$ after replacing $\Delta \mapsto \Delta_r := \eta/r$ where $0<\eta<2\pi$ and $\eta/\pi \notin \mathbb Q$. This should be compared with Stahl's  OPRL example, see \cite[Theorems 1 and 2]{Stahl84} and \cite[Theorem 1]{Stahl91},  where the asymptotic density is obtained for a single, but more intricate, orthogonality measure. Namely, Stahl considered orthogonal polynomials \(q_n\) associated with the measure
\[
\dd \mu(x) = \frac{(x - \cos \pi r_1)(x - \cos \pi r_2)}{\pi \sqrt{1 - x^2}} \dd x, \quad x \in [-1,1],
\]
where $r_1, r_2, r_1/r_2 \notin \mathbb Q$.    
It was observed that $n-2$ zeros lie in $[-1,1]$ and the remaining two zeros $Z(q_n)$ are asymptotically dense in $\C$, i.e. $\cap^{\infty}_{m=1} \overline{ \cup_{n \geq m} Z(q_n) } = \C$.

We now illustrate the finite-degree resonance discussed in Remarks \ref{remark MOPUC eg 1} and \ref{remark MOPUC eg 2} numerically. The table below gives
the parameters used in the corresponding figures for two regimes: Regime~I,
where the arcs have equal length but arbitrary centers, and Regime~II, the
symmetric specialization in which the arc centers are equally spaced. In both
cases, the common arc length is varied toward the first resonant value
\[
\Delta_{\mathrm{res}}:=\frac{2\pi}{n-r}.
\]
In the following examples we consider \(n=8\) and \(r=3\), so that $\Delta_{\mathrm{res}}=2\pi/5$.

\begin{table}[H]
\centering
\small
\begin{tabular}{c c c c c}
\toprule
Regime & Color & Arc centers $(\theta_1,\theta_2,\theta_3)$
& $\Delta/\Delta_{\mathrm{res}}$ & Observed zero with the largest modulus \\
\midrule

\multirow{5}{*}{I}
& Green & \multirow{5}{*}{$(17^\circ,\,97^\circ,\,201^\circ)$}
& $0.20$ & $0.9647$ \\
& Blue & & $0.40$ & $0.8415$ \\
& Gold & & $0.60$ & $0.4465$ \\
& Orange & & $0.80$ & $3.0348$ \\
& Red & & $0.99$ & $2.9585$ \\
\midrule

\multirow{5}{*}{II}
& Green &
\multirow{5}{*}{$(11^\circ,\,131^\circ,\,251^\circ)$}
& $0.20$ & $0.9647$ \\
& Blue & & $0.40$ & $0.8409$ \\
& Gold & & $0.60$ & $0.4351$ \\
& Orange & & $0.80$ & $0.9357$ \\
& Red & & $0.99$ & $2.6781$ \\

\bottomrule
\end{tabular}

\caption{Parameters used in
Figures~\ref{fig:mopuc-arc-supports-n8} and
\ref{fig:mopuc-zero-escape-n8} below, with
$n=8$, $r=3$.
The arc centers remain fixed within each regime, while the common arc
length is given as a fraction of $\Delta_{\mathrm{res}}$. The arc centers are rounded to the nearest degree.  For Regime~I, the final column gives the largest
modulus among the three computed nonzero zeros. For Regime~II, it
gives their common modulus. Expectedly, comparing the data in the last column of Regime I and Regime II also confirms \eqref{maxzej}.}
\label{tab:mopuc-n8-parameters}
\end{table}

%\newpage
\renewcommand\thesubfigure{\Alph{subfigure}}
\begin{figure}[ht!]
\centering
\begin{subfigure}{0.45\textwidth}
\centering
\includegraphics[width=\linewidth]{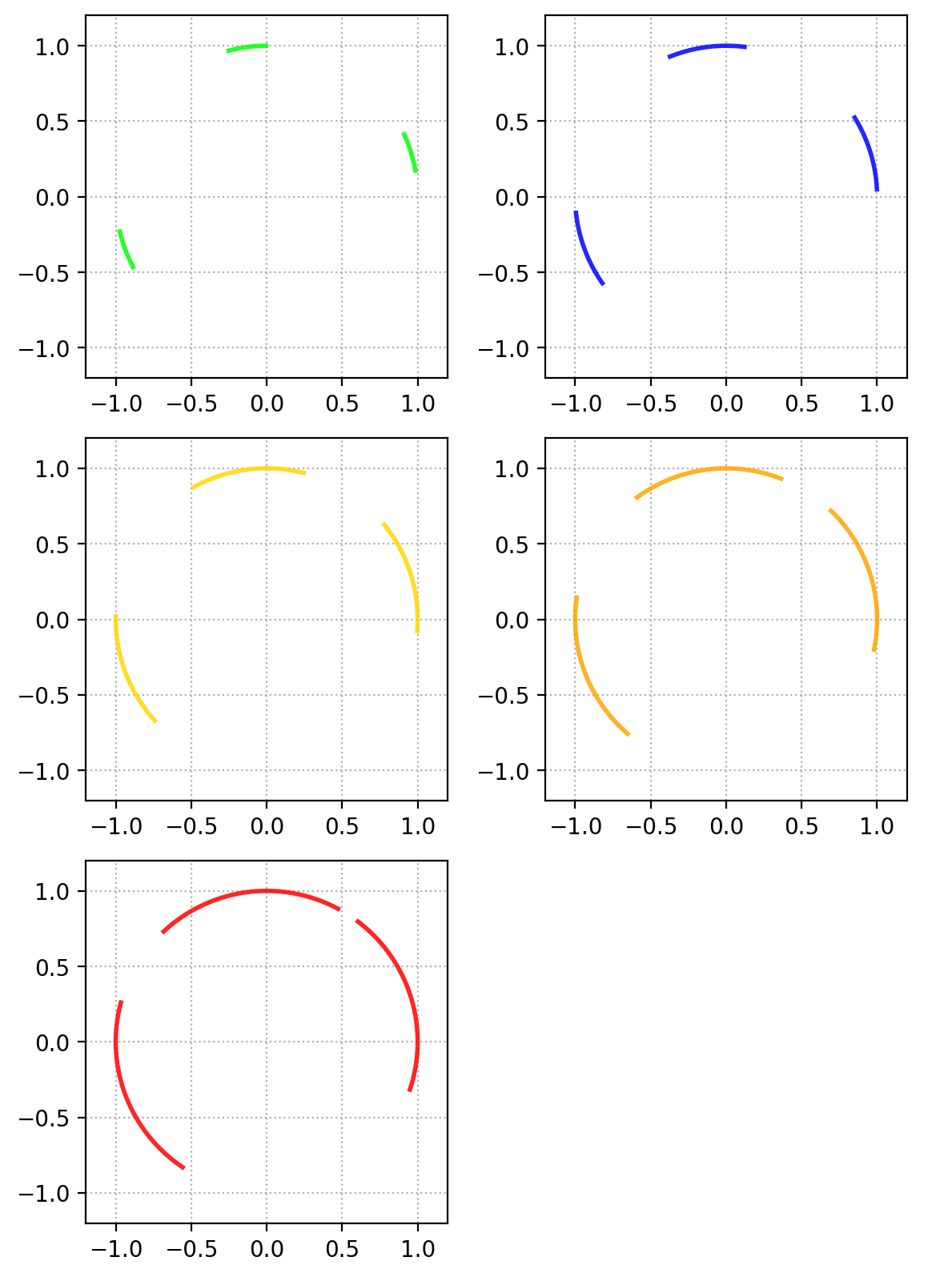}
\caption{ Regime I.}
\label{fig:mopuc-arc-supports-n8-I}
\end{subfigure}
\hfill
\begin{subfigure}{0.45\textwidth}
\centering
\includegraphics[width=\linewidth]{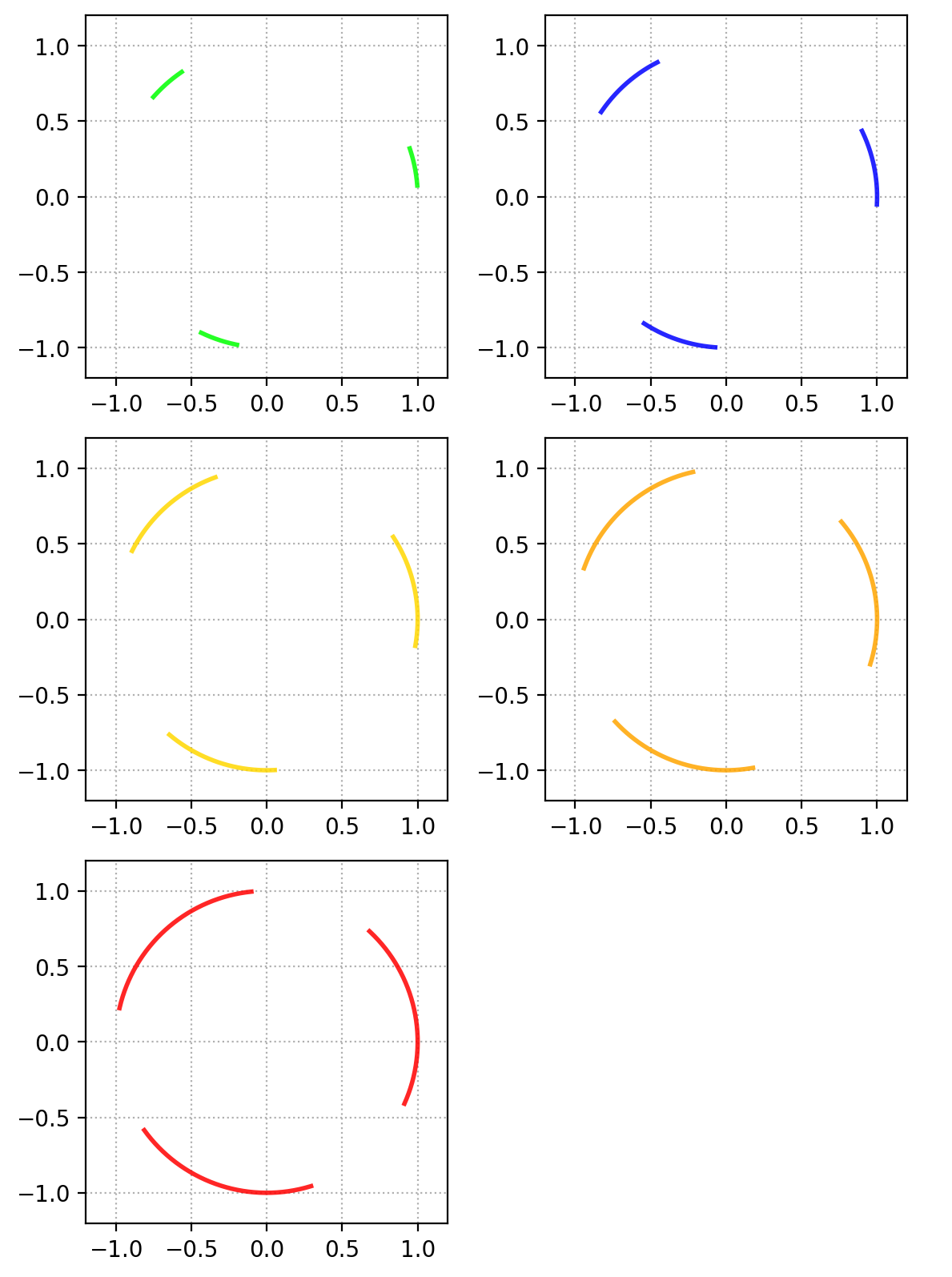}
\caption{ Regime II.}
\label{fig:mopuc-arc-supports-n8-II}
\end{subfigure}
\caption{Arc supports for the type II MOPUC examples. The geometric parameters are described in Table \ref{tab:mopuc-n8-parameters}.
}
\label{fig:mopuc-arc-supports-n8}
\end{figure}

\renewcommand\thesubfigure{\Alph{subfigure}}
\begin{figure}[ht!]
\centering
\begin{subfigure}{0.45\textwidth}
\centering
\includegraphics[width=\linewidth]{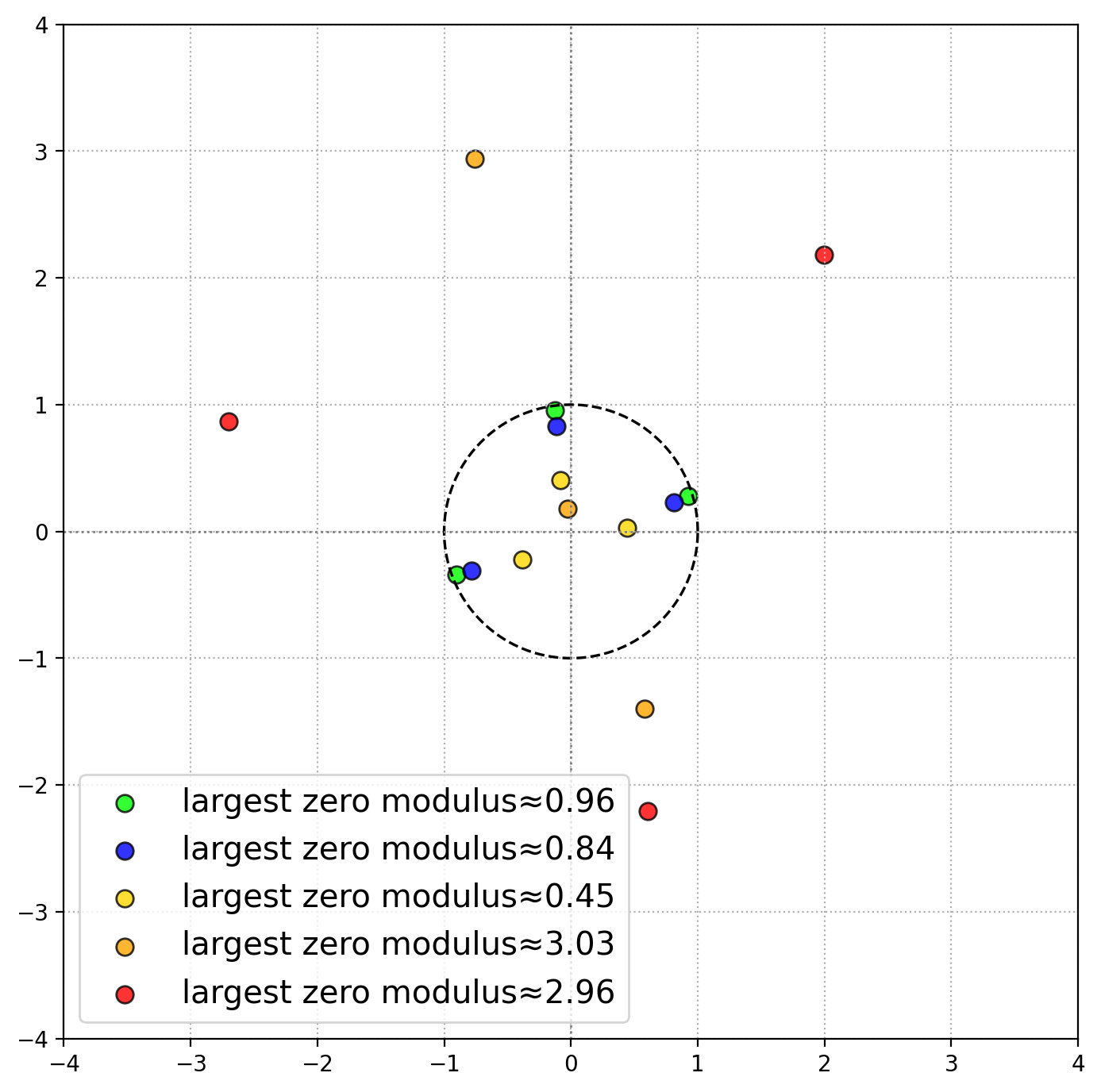}
\caption{Regime I. For each arc configuration, the legend gives the
largest modulus among the three nonzero zeros; see \eqref{maxzej}.}
\label{fig:mopuc-zero-escape-n8-I}
\end{subfigure}
\hfill
\begin{subfigure}{0.45\textwidth}
\centering
\includegraphics[width=\linewidth]{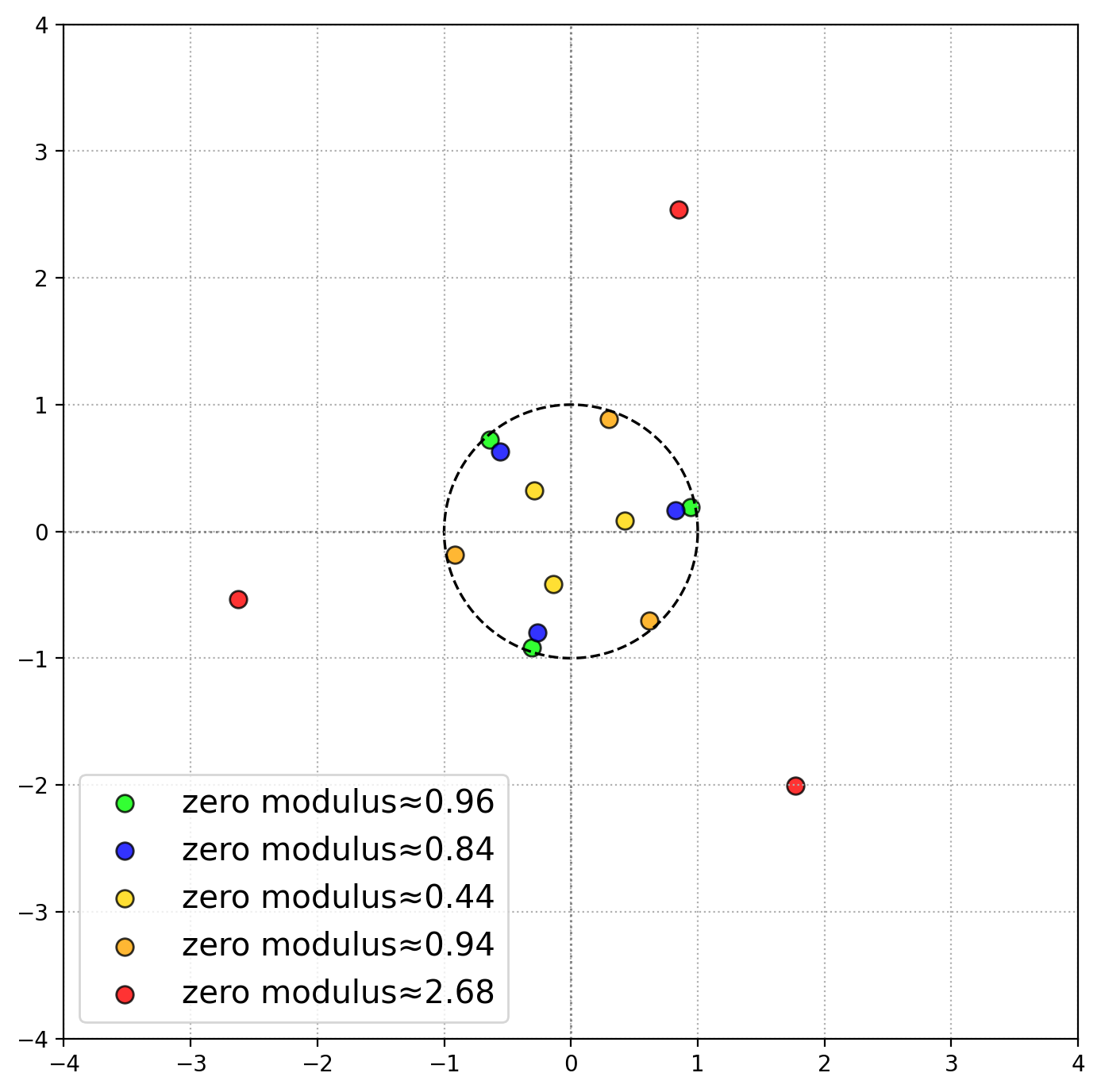}
\caption{Regime II. For each arc configuration, the legend gives the
common modulus of the three nonzero zeros; see \eqref{zek}.} 
\label{fig:mopuc-zero-escape-n8-II}
\end{subfigure}
\caption{Escape of the \textit{nonzero} type II MOPUC zeros.  The color of each zero set matches the color of the corresponding arc configuration shown in Figure~\ref{fig:mopuc-arc-supports-n8}. As expected, the zeros shown in Figure \ref{fig:mopuc-zero-escape-n8-II} lie on exactly six rays as prescribed in Proposition \ref{thm:fixed-r-ray-density}.}
\label{fig:mopuc-zero-escape-n8}
\end{figure}

\begin{corollary} \label{cor: MOPUC type I eg 2}
Under the equally-spaced assumption \eqref{equal space MOPUC eg 2}, \eqref{MOPUC type I eg 1} can be simplified as
\begin{align} \label{MOPUC type I eg 2}
\Psi_{\vec{n}}(z) = \frac{1}{r a_{n-1}} \left(\sum_{j=1}^r x_j^{n-1} \mathbf{1}_{E_j}(\arg z) + \sum_{l=0}^{n-r-1} \left( \sum_{k=1}^r a_l \, x_k^{n-l-1} \right) z^l \right).
\end{align}
\end{corollary}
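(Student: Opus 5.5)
The plan is to specialize Corollary~\ref{cor: MOPUC type I eg 1} directly. Under \eqref{equal space MOPUC eg 2} we have $x_j = x_1\rho^{j-1}$ with $\rho$ a primitive $r$-th root of unity. The only quantities that need simplifying are the coefficients $c_j$ in \eqref{eg 1 defn of cj}. Once $c_j$ is known, the polynomial coefficients $\sum_k c_k a_l x_k^{-l}$ will follow by substitution.

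First, I will compute the two symmetric products. The $x_m$ are the roots of $z^r - x_1^r$, so
\[
\prod_{m=1}^r x_m = (-1)^{r-1}x_1^r .
\]
The same factorization gives
\[
\prod_{m\neq j}(x_j - x_m) = \frac{\dd}{\dd z}(z^r - x_1^r)\Big|_{z=x_j} = r x_j^{r-1}.
\]
Substituting both into \eqref{eg 1 defn of cj} gives
\[
c_j = \frac{(-1)^{r-1}x_j^{n-2}}{a_{n-1}}\cdot\frac{(-1)^{r-1}x_1^r}{r x_j^{r-1}} = \frac{x_j^{n-1}}{r a_{n-1}}\cdot\frac{x_1^r}{x_j^{r}}.
\]
Since $x_j^r = x_1^r\rho^{r(j-1)} = x_1^r$, this reduces to $c_j = x_j^{n-1}/(r a_{n-1})$. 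This produces the first sum in \eqref{MOPUC type I eg 2}.

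Second, for the polynomial part I substitute this $c_k$. This gives
\[
\sum_k \frac{c_k a_l}{x_k^l} = \frac{1}{r a_{n-1}}\sum_k a_l x_k^{n-1-l},
\]
which matches the second sum in \eqref{MOPUC type I eg 2} exactly. Factoring out $1/(r a_{n-1})$ completes the identity.

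The only delicate point is making sure the signs in $\prod_m x_m$ and in \eqref{eg 1 defn of cj} cancel correctly. This relies on the sign convention $\prod_m x_m = (-1)^{r-1}\cdot(-(-x_1^r))$ coming from the constant term of $z^r - x_1^r$, which equals $(-1)^r\prod_m x_m$. One could additionally observe that $\sum_k x_k^{n-1-l}$ vanishes unless $r \mid n-1-l$, but this further simplification is not required by the statement, so I will not pursue it. I would also remark that $a_{n-1}\neq 0$ is needed. It is implicit in Corollary~\ref{cor: MOPUC type I eg 1}, and it holds, for instance, when $\Delta\notin\pi\mathbb Q$.
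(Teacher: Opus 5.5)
Your proposal is correct and matches the paper's proof: both write $\prod_{m\neq j}(x_j-x_m)=W'(x_j)=r x_j^{r-1}$ for $W(x)=x^r-x_1^r$, and both use $\prod_m x_m=(-1)^{r-1}x_1^r$ together with $x_j^r=x_1^r$ to obtain $c_j=x_j^{n-1}/(r a_{n-1})$. Substituting this $c_j$ into \eqref{MOPUC type I eg 1} then gives \eqref{MOPUC type I eg 2}.
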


\begin{corollary} \label{cor: MOPUC kernel eg 2}
Under the equally-spaced assumption \eqref{equal space MOPUC eg 2}, \eqref{MOPUC kernel eg 1} can be simplified as
\begin{align} \label{MOPUC kernel eg 2}
\mathcal K(z,\xi) - \mathcal K_0(z,\xi) = \frac{1}{r} \sum_{k=1}^r \left( \frac{z^{l_k}}{a_{l_k} x_1^{l_k}} \sum_{p=1}^r \left( \mathbf{1}_{E_p} (\arg \xi) - \sum_{j=0}^{n-1} \xi^{-j} a_j x_p^j \right) \rho^{-(p-1)l_k} \right).
\end{align}
\end{corollary}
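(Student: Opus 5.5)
Under \eqref{equal space MOPUC eg 2}, the plan is to specialize Corollary~\ref{cor: MOPUC kernel eg 1} directly, using the algebra of $r$-th roots of unity to simplify its coefficient
\[
C_{p,k}:=\frac{x_p^{-(n-r)}}{\prod_{m\neq p}(x_p-x_m)}(-1)^{r-k}e_{r-k}(\hat x_p)\frac{1}{a_{l_k}}.
\]
The first factor in each summand of \eqref{MOPUC kernel eg 1}, namely $\mathbf{1}_{E_p}(\arg\xi)-\sum_j\xi^{-j}a_jx_p^j$, already appears unchanged in \eqref{MOPUC kernel eg 2}. So the whole proof reduces to showing
\[
C_{p,k}=\frac{1}{r\,a_{l_k}x_1^{l_k}}\rho^{-(p-1)l_k}.
\]

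First, the $x_m$ are exactly the roots of $z^r-x_1^r$. Hence $\prod_{m\neq p}(x_p-x_m)$ equals the derivative of $z^r-x_1^r$ at $x_p$, which is $r x_p^{r-1}$.

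Second, the elementary symmetric polynomials of the remaining roots come from the quotient
\[
\frac{z^r-x_1^r}{z-x_p}=\sum_{s=0}^{r-1}x_p^{s}z^{r-1-s}.
\]
Writing this as $\prod_{m\neq p}(z-x_m)=\sum_{s}(-1)^s e_s(\hat x_p)z^{r-1-s}$ and comparing coefficients gives $(-1)^s e_s(\hat x_p)=x_p^s$. With $s=r-k$ this says $(-1)^{r-k}e_{r-k}(\hat x_p)=x_p^{r-k}$.

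Substituting both facts,
\[
C_{p,k}=\frac{x_p^{-(n-r)}\,x_p^{r-k}}{r\,x_p^{r-1}\,a_{l_k}}
=\frac{x_p^{-(n-r+k-1)}}{r\,a_{l_k}}
=\frac{x_p^{-l_k}}{r\,a_{l_k}},
\]
by \eqref{eq: lq}. Finally $x_p^{-l_k}=x_1^{-l_k}\rho^{-(p-1)l_k}$. Interchanging the sums over $p$ and $k$ then yields \eqref{MOPUC kernel eg 2}.

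The only delicate point is the sign and index bookkeeping in identifying $e_{r-k}(\hat x_p)$ through the quotient polynomial; everything else is direct substitution. I would verify that step against the case $r=2$ as a sanity check.
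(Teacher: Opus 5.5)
Your proposal is correct and follows essentially the same route as the paper's proof. Both arguments use $W(x)=x^r-x_1^r$ to get $\prod_{m\neq p}(x_p-x_m)=W'(x_p)=r x_p^{r-1}$, and compare coefficients of $W(x)/(x-x_p)$ to get $(-1)^{r-k}e_{r-k}(\hat{x}_p)=x_p^{r-k}$. Both then substitute to reduce the coefficient of $z^{l_k}$ to $x_p^{-l_k}/(r a_{l_k})$, write $x_p^{-l_k}=x_1^{-l_k}\rho^{-(p-1)l_k}$, and interchange the sums; your sign and index bookkeeping in the quotient step is right.
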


\begin{comment}
\begin{proposition}[Ray density for fixed $r$]\label{thm:fixed-r-ray-density}
In the equally spaced MOPUC arc example, fix $r\ge 1$ and assume
$0<\Delta<2\pi/r$ and $\Delta/\pi\notin \mathbb Q$.  For $n>r$, let
$\mathcal Z_r^*$ be the set of nonzero zeros of the corresponding type II
polynomials $\Phi_{n,r}$, taken over all $n=r+1,r+2,\ldots$. If
$x_1=e^{i\theta_1}$, then
\[
  \overline{\mathcal Z_r^*}
  =
  \bigcup_{\ell=0}^{2r-1}
  \{\rho e^{i(\theta_1+\pi\ell/r)}:\rho\ge 0\}.
\]
\end{proposition}

\begin{proof}
Put $m=n-r$. By the symmetric formula, the nonzero zeros are the roots of
$z^r=A_{m+r,r}(\Delta)x_1^r$, where
\[
  A_{m+r,r}(\Delta)
  =\frac{m}{m+r}\frac{\sin((m+r)\Delta/2)}{\sin(m\Delta/2)}
  =\frac{m}{m+r}\left(\cos\frac{r\Delta}{2}
    +\sin\frac{r\Delta}{2}\cot\frac{m\Delta}{2}\right).
\]
Since $\Delta/\pi$ is irrational, $m\Delta/2 \pmod {\pi}$ is dense in $(0,\pi)$ . Also
$\sin(r\Delta/2)\ne0$. Thus, the last parenthesis takes every real value
along subsequences of $m$, and since $m/(m+r)\to1$, every $y\in\R$ is a
subsequential limit of $A_{m+r,r}(\Delta)$.
\end{proof}
\end{comment}

The proofs of these corollaries will be presented in Section~\ref{section example MOPUC}.

\section{Proof of Theorems (MOPRL)} \label{section proofs}

\subsection{Proof of Theorem~\ref{typeIIreduction}}

By Definition~\ref{defn of type II}, type II MOPRL $P_{\vec{n}}(x)$ is a monic polynomial of degree $n=|\vec{n}|$. 
As the space of polynomials of degree up to $n$ is a Hilbert space spanned by the monic orthogonal polynomials $\{ p_k (x) \}_{k=0}^n$ with respect to the reference weight $w^{(1)}(x)$, one can uniquely express $P_{\vec{n}}(x)$ by
\begin{equation}
    P_{\vec{n}}(x) = p_n(x) + \sum_{k=0}^{n-1} c_k p_k(x)
\end{equation}
for some $c_k \in \R$. Recall $P_{\vec{n}}(x)$ satisfies \eqref{type II orthogonality} as part of the definition. 
Since one can integrate $P_{\vec{n}}(x)$ against the orthogonal polynomial basis $\{p_k(x)\}_{k=0}^{n_j-1}$ instead of the monomial basis $\{x^k\}_{k=0}^{n_j-1}$ to check the orthogonality, we observe that the first $n_1$ orthogonality conditions, $\int_\R P_{\vec{n}}(x) p_k(x) w^{(1)}(x) \dd x = 0$ for $k = 0, \cdots, n_1 - 1$, implies that $c_0 = \dots = c_{n_1-1} = 0$.
This simplifies the expansion to
\begin{equation} \label{eq:P_expansion_pi_basis}
    P_{\vec{n}}(x) = p_n(x) + \sum_{k=n_1}^{n-1} c_k p_k(x).
\end{equation}
The remaining coefficients can be specified from the rest of the orthogonality conditions now for $\{w^{(j)}(x)\}_{j=2}^r$.
For each $j \in \{2, \dots, r\}$ and $b \in \{0, \dots, n_j-1\}$, we have
\begin{equation}
\int_{\R} P_{\vec{n}}(x) p_b(x) w^{(j)}(x) \dd x = 0.
\end{equation}
Substituting \eqref{eq:P_expansion_pi_basis} into this condition gives
\begin{equation}
\int_{\R} \left( p_n(x) + \sum_{k=n_1}^{n-1} c_k p_k(x) \right) p_b(x) w^{(j)}(x) \dd x = 0.
\end{equation}
Rearranging this gives a linear system for the coefficient vector $\bm{c}_2 = (c_{n_1}, \dots, c_{n-1})^\top$,
\begin{equation}
\sum_{k=n_1}^{n-1} c_k \left( \int_{\R} p_k(x) p_b(x) w^{(j)}(x) \dd x \right) = - \int_{\R} p_n(x) p_b(x) w^{(j)}(x) \dd x.
\end{equation}
In matrix notation, the linear system for the coefficients becomes 
\begin{equation} \label{cM=-v}
  \bm{c}_2^\top  \bm{M}_{22}=-\bm{v}_2^\top
\end{equation}
where we used \eqref{defn of v}, \eqref{defn of M}, \eqref{defn of pi phi v, 1-2}, and \eqref{defn of M, H 4 blocks}.

Since \eqref{eq:P_expansion_pi_basis} can be written as $P_{\vec{n}}(x) = p_n(x) + \bm{c}_2^\top \bm{p}_2(x) $, we obtain $ P_{\vec{n}}(x) = p_n(x) - \bm{v}_2^\top (\bm{M}_{22} )^{-1}  \bm{p}_2(x)$ from \eqref{cM=-v}. 
Moreover, by Schur's formula for the determinant of a block matrix, we have
\begin{equation}
\det
\begin{pmatrix}
\;\bm{M}_{22}  & \bm{p}_2(x) \\
 & \\
\;\bm{v}_2 ^\top & p_n(x)
\end{pmatrix} = \det\bm{M}_{22} \left( p_n(x) - \bm{v}_2^\top \bm{M}_{22}^{-1}\bm{p}_2(x)\right).
\end{equation}
Dividing by $\det \bm{M}_{22}$, we obtain the determinantal formula for $P_{\vec{n}}(x)$, which proves Theorem~\ref{typeIIreduction}.

%%%%%%%%%%%%%%%%%%%%%%%%%%%%%%%%%%%%%%%%%%

\subsection{Proof of Theorem~\ref{type I reduction}}

Similar to type II MOPRL, defining condition \eqref{type I orthogonality 1} are entirely equivalent to testing against the orthogonal polynomial basis $ \{ p_m(x) \}_{m=0}^{n-1} $,
\begin{equation} \label{type I orthogonality equiv}
    \int_{\R} Q_{\vec{n}}(x) p_m(x) \, \dd x = \delta_{m, n-1}, \quad \text{for } m = 0, \dots, n-1.
\end{equation}
We now build a linear system using these equivalent conditions. 
Since type I MOPRL $A_j(x)$ for $j=1, \cdots, r$ can be written as a linear combination of $\{p_k\}_{k=0}^{n-1}$, one has
\[
A_j(x) = \sum_{k=0}^{n_j-1} d_k^{(j)} p_k(x)
\]
for some $d_k^{(j)} \in \R$, and $Q_{\vec{n}}(x)$ is expressed by
\begin{equation} \label{Q=dTphi}
    Q_{\vec{n}}(x) = \sum_{j=1}^{r} \sum_{k=0}^{n_j-1} d_k^{(j)} p_k(x) w^{(j)}(x) = \bm{d}^\top \bm{q}(x),
\end{equation}
where $\bm{d} = \left( d_0^{(1)}, \cdots, d_{n_1 - 1}^{(1)}, d_0^{(2)}, \cdots, d_{n_2 - 1}^{(2)}, \boldsymbol{\cdots}, d_0^{(r)}, \cdots, d_{n_r - 1}^{(r)} \right)^\top $, recall \eqref{defn of phi}.
Substituting this into \eqref{type I orthogonality equiv} gives
\begin{equation}
    \bm{d}^\top \int_{\R} p_m(x) \bm{q}(x) \dd x = \delta_{m, n-1}
\end{equation}
for $m = 0, \cdots, n-1$. In matrix notation, recalling \eqref{defn of M}, this gives
\begin{align}\label{Q system}
    \bm{d}^\top \bm{M}^\top = \bm{e}_n^\top \iff \bm{M} \bm{d} = \bm{e}_n,
\end{align}
where $\bm{e}_n = (0, \cdots, 0, 1)^\top \in \R^n$. 
Thus, we have $Q_{\vec{n}}(x) = \bm{e}_n^\top \bm{M}^{\top-1}  \bm{q}(x)$, and similarly to the discussion in the previous section, we obtain the determinantal formula for $Q_{\vec{n}}(x)$, which proves Theorem~\ref{type I reduction}.

%%%%%%%%%%%%%%%%%%%%%%%%%%%%%%%%%%%%%%%%%

\subsection{Proof of Theorem~\ref{kernel reduction}}
\label{Proof of Kernel MOPRL}

We first prove a simple uniqueness lemma for the finite-dimensional kernel. This lemma also explains why the kernel can be written in the weighted basis $\bm q$.

\begin{lemma}\label{lem:kernel uniqueness}
For each fixed $x$, there is a unique function $L(x,\cdot)$ in
\begin{equation}
    \mathcal W_{\vec n}^{\mathbb R}:=\operatorname{span}\{q_1,\ldots,q_n\},
\end{equation}
where $q_1,\ldots,q_n$ are the entries of $\bm q$, such that
\begin{equation}\label{kernel reproducing property}
    \int_{\mathbb R}L(x,y)\pi(y)\,\dd y=\pi(x),  \quad \pi\in\mathcal P_{n-1}^{\mathbb R}.
\end{equation}
Moreover, the MOPRL kernel can be written as
\begin{equation}\label{K=qMpinv}
    K(x,y)=\bm q(y)^\top\bm M^{-1}\bm p(x).
\end{equation}
\end{lemma}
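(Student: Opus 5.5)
The plan is to reduce everything to the invertibility of $\bm M$, which holds because the system is perfect: $\bm M$ differs from the Hankel moment matrix $m$ by invertible triangular changes of basis from monomials to $\{p_k\}$, in both the rows and in each column block. The uniqueness statement and the formula \eqref{K=qMpinv} will then follow from two short linear-algebra computations.

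\emph{Existence and uniqueness.} Any $L(x,\cdot)\in\mathcal W_{\vec n}^{\mathbb R}$ can be written as $L(x,y)=\bm q(y)^\top\bm c(x)$ for some coefficient vector $\bm c(x)\in\R^n$. Condition \eqref{kernel reproducing property} needs to be checked only on the basis $\pi=p_m$, $m=0,\dots,n-1$. Collecting these conditions gives
\[
\int_\R \bm p(y)\bm q(y)^\top \bm c(x)\,\dd y=\bm M\bm c(x)=\bm p(x).
\]
Since $\bm M$ is invertible, this system has exactly one solution, $\bm c(x)=\bm M^{-1}\bm p(x)$. One subtlety must be handled: uniqueness of $L$ as a function requires the entries of $\bm q$ to be linearly independent. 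This independence itself follows from the invertibility of $\bm M$. Indeed, if $\bm d^\top\bm q\equiv 0$, then $\bm M\bm d=\int\bm p\,\bm q^\top\bm d=0$, so $\bm d=0$. We conclude that $L(x,y)=\bm q(y)^\top\bm M^{-1}\bm p(x)$ is the unique such function.

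\emph{Identification with $K$.} It remains to show that $K(x,\cdot)$ lies in $\mathcal W_{\vec n}^{\mathbb R}$ and satisfies the reproducing property; uniqueness then gives $K=L$. Along the natural path, $\vec n_{j+1}\le\vec n$ componentwise. Hence each $Q_j=Q_{\vec n_{j+1}}$ is a combination of $x^kw^{(i)}$ with $k<n_i$, and therefore lies in $\mathcal W_{\vec n}^{\mathbb R}$. The polynomials $P_0,\dots,P_{n-1}$ are monic of degrees $0,\dots,n-1$, so they form a basis of $\mathcal P^{\R}_{n-1}$. By Lemma~\ref{biorthogonality},
\[
\int_\R K(x,y)P_k(y)\,\dd y=\sum_j P_j(x)\delta_{jk}=P_k(x),\qquad k=0,\dots,n-1.
\]
By linearity, \eqref{kernel reproducing property} then holds for every $\pi\in\mathcal P^{\R}_{n-1}$. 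Uniqueness yields \eqref{K=qMpinv}.

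The only step needing care is the invertibility of $\bm M$ from perfectness, specifically that the triangular basis change is compatible with the block structure of $\bm q$. This works because each block $p_0w^{(j)},\dots,p_{n_j-1}w^{(j)}$ is obtained from $w^{(j)},\dots,x^{n_j-1}w^{(j)}$ by a unipotent lower-triangular matrix. Consequently $\det\bm M=\det m$.
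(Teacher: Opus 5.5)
Your proof is correct and follows essentially the same route as the paper. You test the reproducing property on $p_0,\dots,p_{n-1}$ to get the system $\bm M\bm c(x)=\bm p(x)$, invoke invertibility of $\bm M$, and then identify $K$ using that the natural path stays below $\vec n$ together with the biorthogonality of Lemma~\ref{biorthogonality}.

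The differences are cosmetic. You solve the system directly, whereas the paper argues uniqueness through a difference $\bm a(x)^\top\bm q(y)$ and separately checks $\widehat K$. You also justify $\det\bm M=\det m\neq 0$ via unipotent triangular basis changes, which the paper only asserts (compare its appendix). Your linear-independence remark is harmless but unnecessary: any representation of $L$ must have coefficient vector $\bm M^{-1}\bm p(x)$, and that alone pins down the function.
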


\begin{proof}
Suppose that $L_1(x,\cdot)$ and $L_2(x,\cdot)$ both belong to $\mathcal W_{\vec n}^{\mathbb R}$ and both satisfy \eqref{kernel reproducing property}. Then, for fixed $x$, their difference can be written as
\begin{equation}
    L_1(x,y)-L_2(x,y)=\bm a(x)^\top\bm q(y)
\end{equation}
for some $n$-column vector $\bm a(x)$. Since the difference integrates to zero against every polynomial in $\mathcal P_{n-1}^{\mathbb R}$, we may test it against the basis $p_0,\ldots,p_{n-1}$. Thus, for $m=0,\ldots,n-1$,
\begin{equation}
    0=\left(\int_{\mathbb R}p_m(y)\bm q(y)^\top\,\dd y\right)\bm a(x).
\end{equation}
Equivalently,
\begin{equation}
    \bm M\bm a(x)=\bm 0.
\end{equation}
Since the multi-index is normal, the mixed-moment matrix $\bm M$ is nonsingular. Hence $\bm a(x)=\bm 0$, and the function satisfying \eqref{kernel reproducing property} is unique.

We now check that the MOPRL kernel satisfies the same property. Recall \eqref{defn of CD kernel}; for each fixed $x$, $K(x,\cdot)$ is a linear combination of $Q_0,\ldots,Q_{n-1}$. Since the natural path remains componentwise bounded by the final multi-index $\vec n$, each $Q_j$ is a type I linear form whose polynomial coefficients have degrees bounded by $n_k-1$ in the $k$-th weight, therefore $Q_j\in\mathcal W_{\vec n}^{\mathbb R}$. Thus $K(x,\cdot)\in\mathcal W_{\vec n}^{\mathbb R}$. Moreover, since $P_0,\ldots,P_{n-1}$ are monic polynomials of degrees $0,\ldots,n-1$, they form a basis of $\mathcal P_{n-1}^{\mathbb R}$. Therefore, for any $\pi\in\mathcal P_{n-1}^{\mathbb R}$, we can write
\begin{equation}
    \pi(y)=\sum_{m=0}^{n-1}a_mP_m(y).
\end{equation}
Using the biorthogonality relation \eqref{biorth MOPRL}, we obtain
\begin{equation}
    \int_{\mathbb R}K(x,y)\pi(y)\,\dd y = \int_{\mathbb R} \left(\sum_{j=0}^{n-1}P_j(x)Q_j(y)\right)  \left(\sum_{m=0}^{n-1}a_mP_m(y)\right)\dd y
    =\sum_{j=0}^{n-1}a_jP_j(x)=\pi(x).
\end{equation}
Thus $K$ satisfies \eqref{kernel reproducing property}.

On the other hand, define
\begin{equation}
    \widehat K(x,y):=\bm q(y)^\top\bm M^{-1}\bm p(x).
\end{equation}
Clearly $\widehat K(x,\cdot)\in\mathcal W_{\vec n}^{\mathbb R}$. It is enough to test \eqref{kernel reproducing property} on the basis $p_0,\ldots,p_{n-1}$. For $m=0,\ldots,n-1$, by the definition of $\bm M$, we have
\begin{equation}
    \int_{\mathbb R}\widehat K(x,y)p_m(y)\,\dd y
    = \left(\int_{\mathbb R}p_m(y)\bm q(y)^\top\,\dd y\right)\bm M^{-1}\bm p(x)
    =\bm e_{m+1}^\top\bm M\bm M^{-1}\bm p(x) =p_m(x).
\end{equation}
Thus $\widehat K$ also satisfies \eqref{kernel reproducing property}. By uniqueness, $K=\widehat K$, which proves \eqref{K=qMpinv}.
\end{proof}

We now prove Theorem~\ref{kernel reduction}. Recall the block decompositions \eqref{defn of pi phi v, 1-2} and \eqref{M block structure}. Since $\bm M$ is nonsingular by normality, its inverse is
\begin{equation}\label{block inverse proof 33}
    \bm M^{-1}=
    \begin{pmatrix}
        \bm H_1^{-1} & -\bm H_1^{-1}\bm M_{12}\bm M_{22}^{-1}\\
        \bm 0        & \bm M_{22}^{-1}
    \end{pmatrix}.
\end{equation}
Substituting \eqref{block inverse proof 33} into \eqref{K=qMpinv} gives
\begin{equation}\label{K block proof 33}
    K(x,y)
    = \bm q_1(y)^\top\bm H_1^{-1}\bm p_1(x)+\left( \bm q_2(y)^\top -\bm q_1(y)^\top\bm H_1^{-1}\bm M_{12}  \right) \bm M_{22}^{-1}\bm p_2(x).
\end{equation}
Thus, using \eqref{K0 block}, we obtain
\begin{equation}\label{kernel difference proof 33}
    K(x,y)-K_0(x,y)
    =\left( \bm q_2(y)^\top - \bm q_1(y)^\top\bm H_1^{-1}\bm M_{12} -   w^{(1)}(y)\bm p_2(y)^\top\bm H_2^{-1}\bm M_{22} \right) \bm M_{22}^{-1}\bm p_2(x).
\end{equation}

It remains to identify the row vector in parentheses with $\bm r(y)^\top$. By definition,
\begin{equation}\label{r proof 33 start}
    \bm r(y)^\top =\bm q_2(y)^\top -  \int_{\mathbb R}K_0(s,y)\bm q_2(s)^\top\,\dd s.
\end{equation}
Using \eqref{defn of CD kernel for p}, we have
\begin{align}\label{projection proof 33}
    \int_{\mathbb R}K_0(s,y)\bm q_2(s)^\top\,\dd s
    &=w^{(1)}(y) \sum_{j=0}^{n-1}\frac{p_j(y)}{h_j} \int_{\mathbb R}p_j(s)\bm q_2(s)^\top\,\dd s \notag \\
    &= \bm q_1(y)^\top\bm H_1^{-1}\bm M_{12}   + w^{(1)}(y)\bm p_2(y)^\top\bm H_2^{-1}\bm M_{22}.
\end{align}
Substituting \eqref{projection proof 33} into \eqref{r proof 33 start}, we get
\begin{equation}\label{r proof 33 final}
    \bm r(y)^\top =\bm q_2(y)^\top -\bm q_1(y)^\top\bm H_1^{-1}\bm M_{12} - w^{(1)}(y)\bm p_2(y)^\top\bm H_2^{-1}\bm M_{22}.
\end{equation}
By \eqref{kernel difference proof 33} and \eqref{r proof 33 final},
\begin{equation}
    K(x,y)-K_0(x,y)=\bm r(y)^\top\bm M_{22}^{-1}\bm p_2(x),
\end{equation}
which completes the proof of Theorem~\ref{kernel reduction}.

%%%%%%%%%%%%%%%%%%%%%%%%%%%%%

\subsection{Proof of Corollary~\ref{cor-general-Hermite-kernel}} \label{subsec:proof-general-Hermite-kernel}

We apply the general reduction formula to the multiple Hermite weights
\begin{equation}
    w^{(1)}(x)=e^{-x^2},\quad w^{(j)}(x)=e^{a_jx-x^2},\quad j=2,\cdots,r.
\end{equation}
It is a well-known fact that the monic orthogonal polynomials $p_k(x)$ with respect to the weight $w^{(1)}(x) = e^{-x^2}$ are the monic Hermite polynomials, 
\begin{align} \label{defn of H-hat}
\widehat{H}_k(x) = \frac{(-1)^k}{2^k} e^{x^2} \frac{\dd^k}{\dd x^k} e^{-x^2}.
\end{align}
Throughout this subsection, we use several facts about Hermite polynomials, see for example, \cite[Sections 18.3, 18.5, 18.17, 18.18]{NIST:DLMF}.

\subsubsection{Explicit Expression of $\boldsymbol{M}$}

Recall that the mixed-moment matrix $\bm M$ admits the block structures \eqref{M=(M1 M2 ... Mr)} and  \eqref{M block structure}, i.e.,
\[
M^{(1)} = \begin{pmatrix}
    \bm H_1\\
    \bm 0
\end{pmatrix} \in \operatorname{Mat}_{n\times n_1}, \quad \begin{pmatrix}
    M^{(2)} & \cdots & M^{(r)}
\end{pmatrix} = \begin{pmatrix}
    \bm M_{12}\\
    \bm M_{22}
\end{pmatrix} \in \operatorname{Mat}_{n\times (n - n_1)}.
\]
By \eqref{defn of H}, $\bm{H}_1$ is a diagonal matrix of norms $h_k$ for $k=0, \cdots, n_1-1$, where each $h_k$ is given by
\begin{align} \label{app hk}
h_k = \int_\R \widehat{H}_k^2(x) e^{-x^2} \, \dd x = \frac{\sqrt{\pi} \, k!}{2^k}.
\end{align}
By \eqref{defn of M}, each $n \times n_j$ sub-block matrix of the right $n \times (n-n_1)$ block matrix of $\bm M$,
\[
\big(\underbrace{M^{(2)}}_{n \times n_2} \; \cdots \; \underbrace{M^{(r)}}_{n \times n_r} \big),
\]
has entries
\begin{align} \label{defn of M^{(j)}_{m,b}}
M^{(j)}_{m,b} = \int_{\mathbb R}\widehat H_m(x)\widehat H_b(x)e^{a_jx-x^2}\,\dd x, \quad 0 \leq m \leq n-1, \quad 0 \leq b \leq n_j-1.
\end{align}
Since it holds that
\[
\widehat H_m(x)\widehat H_b(x)=\sum_{q=0}^{\min(m,b)}2^{-q}q!\binom{m}{q}\binom{b}{q}\widehat H_{m+b-2q}(x)
\]
and
\[
\int_{\mathbb R}\widehat H_k(x)e^{a_jx-x^2}\,\dd x=\sqrt{\pi}\,e^{t_j^2}t_j^k, \quad t_j = \frac{a_j}{2},
\]
it follows that
\begin{equation}\label{eq:G-derivative-proof}
M^{(j)}_{m,b} = \sqrt{\pi}\,e^{t_j^2}\sum_{q=0}^{\min(m,b)}2^{-q}q!\binom{m}{q}\binom{b}{q}t_j^{m+b-2q} = \sqrt{\pi}\,e^{t_j^2} \sum_{q=0}^{\min(m,b)}2^{-q} \binom{b}{q} t_j^{b-q} \left[\frac{\dd^q}{\dd t^q}t^{n_1+s}\right]_{t=t_j}.
\end{equation}

\subsubsection{Decomposition of $\boldsymbol{M}_{22}$}

Since we need to compute $\bm M_{22}^{-1}$ explicitly to apply Theorem~\ref{kernel reduction}, we shall further study structure of $\bm M_{22}$.
Let $N = n-n_1$. Then, one may write $\bm M_{22}$ as
\[
\big(\underbrace{\bm M_{22}^{(2)}}_{N \times n_2} \; \cdots \; \underbrace{\bm M_{22}^{(r)}}_{N \times n_r} \big),
\]
where the $(s,b)$-entry of $\bm M_{22}^{(j)}$ is given by
\begin{align} \label{eq:U-proof-M}
\left(\bm M_{22}^{(j)}\right)_{s,b} = M^{(j)}_{n_1+s,b}, \quad 0 \leq s \leq N-1, \quad 0 \leq b \leq n_j-1.
\end{align}
One can readily notice that \eqref{eq:G-derivative-proof} gives the matrix decomposition,
\[
\bm M_{22} = \bm V \bm R,
\]
where $\bm V$ is an $N \times N$ matrix,
\begin{equation} \label{cor 4.3 proof V}
    \bm V = \big(\underbrace{\bm V^{(2)}}_{N \times n_2} \; \cdots \; \underbrace{\bm V^{(r)}}_{N \times n_r} \big), \quad
    \bm V^{(j)}_{s,q} = \left[\frac{\dd^q}{\dd t^q}t^{n_1+s}\right]_{t=t_j}, \quad 0 \leq s \leq N-1, \quad 0 \leq q \leq n_j-1,
\end{equation}
and $\bm R$ is an $N \times N$ block diagonal matrix,
\begin{equation}
    \bm R=\operatorname{diag}(\underbrace{\bm R^{(2)}}_{n_2 \times n_2},\cdots,\underbrace{\bm R^{(r)}}_{n_r \times n_r}), \quad \bm R^{(j)}_{q,b} = \sqrt{\pi}\,e^{t_j^2}2^{-q}\binom{b}{q}t_j^{b-q},\quad 0\leq q,b\leq n_j-1,
\end{equation}
with a convention $\binom{b}{q}=0$ if $q>b$ (that is each $\bm R^{(j)}$ is upper triangular). Thus, all we need to compute is $\bm R^{-1}$ and $\bm V^{-1}$.

\subsubsection{Inverse of $\boldsymbol{R}$}

For $\bm R^{-1}$, since the inverse of a block diagonal matrix is the matrix formed by the inverses of each individual block matrix, we need to compute $\big( \bm R^{(j)} \big)^{-1}$.
Notice one can write $\bm R^{(j)}$ as a product of matrices,
\[
\bm R^{(j)} = \sqrt{\pi} e^{t_j^2} \operatorname{diag}(2^{-b})_{b=0}^{n_j-1} \, S(t_j),
\]
where $S(t_j)$ is a shifted generalized Pascal matrix with entries
\[
(S(t_j))_{b,q} = \begin{dcases}
\binom{q}{b} t_j^{q-b}, & b \leq q,\\
0, & b > q,
\end{dcases}
\]
whose inverse formula is well-known, see \cite[Equation (7)]{Pascal} for example,
\[
S(t_j)^{-1} = S(-t_j).
\]
Thus, again with the convention $\binom{q}{b}=0$ if $b>q$, one has
\begin{equation}\label{eq:Rinv-proof}
    \left(\bm R^{(j)}\right)^{-1}_{b,q} = \frac{e^{-t_j^2}}{\sqrt{\pi}} \binom{q}{b} (-t_j)^{q-b} 2^q.
\end{equation}

\subsubsection{Inverse of $\boldsymbol{V}$}

To compute $\bm V^{-1}$, we first notice that $\bm V$ is a confluent Vandermonde matrix. By computing the associated generalized Hermite interpolation polynomials, see \eqref{eq:Pi-proof}, we shall find $\bm V^{-1}$. For the general construction of such polynomials, see \cite{Spitzbart1960}.
For $j=2,\ldots,r$, define
\begin{equation}
    L_j(z)=\left(\frac{z}{t_j}\right)^{n_1}\prod_{\substack{i=2\\ i\neq j}}^r\left(\frac{z-t_i}{t_j-t_i}\right)^{n_i},
\end{equation}
the Lagrange-type factor associated with the node $t_j$. Using this factor, introduce, for $0\leq q\leq n_j-1$, the interpolation polynomials
\begin{equation}\label{eq:Pi-proof}
    \Pi_{j,q}(z) = L_j(z) \, \frac{(z-t_j)^q}{q!} \left( \sum_{\mu=0}^{n_j-1-q}\frac{1}{\mu!}\left[\frac{\dd^\mu}{\dd z^\mu}\frac{1}{L_j(z)}\right]_{z=t_j}(z-t_j)^\mu \right),
\end{equation}
where the expression in parentheses is the Taylor polynomial of $1/L_j(z)$ at $t_j$ of degree $n_j-1-q$. The polynomial $\Pi_{j,q}$ satisfies the interpolation conditions
\begin{equation}\label{eq:fund}
    \left[\frac{\dd^\nu}{\dd z^\nu}\Pi_{j,q}(z)\right]_{z=t_i} =\delta_{ij}\delta_{q\nu},\qquad 2\leq i,j\leq r,\qquad 0\leq \nu\leq n_i-1.
\end{equation}
In the special case $n_j=1$, only $q=0$ occurs at the node $t_j$, and the construction reduces to $\Pi_{j,0}=L_j$.

Let us verify that condition \eqref{eq:fund} indeed holds. First, observe that $L_j(z)$ has a zero of order $n_i$ at $z=t_i$ for $i \neq j$, while the remaining factors of $\Pi_{j,q}(z)$ are analytic at $z=t_i$. Hence, $\Pi_{j,q}(z)$ has a zero of order $n_i$ at $z = t_j$ for $i \neq j$, establishing \eqref{eq:fund} for $i \neq j$. For the $i=j$ case, Taylor's theorem gives 
\begin{equation}
    \frac{1}{L_j(z)} - \left(\sum_{\mu=0}^{n_j-1-q}\frac{1}{\mu!}\left[\frac{\dd^\mu}{\dd z^\mu}\frac{1}{L_j(z)}\right]_{z=t_j}(z-t_j)^\mu \right) = \mathcal O \left((z-t_j)^{n_j-q}\right)
\end{equation}
as $z \to t_j$, and combining this with \eqref{eq:Pi-proof}, we have
\begin{equation}
    \Pi_{j,q}(z)=\frac{(z-t_j)^q}{q!}+O\bigl((z-t_j)^{n_j}\bigr).
\end{equation}
This proves \eqref{eq:fund} at the node $t_j$.

Now, we shall rewrite $\Pi_{j,q}(z)$ as
\[
\Pi_{j,q}(z) = \sum_{s=0}^{N-1} v_s^{(j,q)} z^{n_1+s}
\]
where the coefficients $v_s^{(j,q)}$ can be computed from expanding \eqref{eq:fund} at $z=0$. 
Using this expression with \eqref{eq:fund}, one has
\begin{equation} \label{cor 4.3 proof V V^{-1} = I}
    \sum_{s=0}^{N-1}v_s^{(j,q)} \left[\frac{\dd^\nu}{\dd z^\nu}z^{n_1+s}\right]_{z=t_i} = \sum_{s=0}^{N-1}v_s^{(j,q)} \bm V^{(i)}_{s,\nu} =  \delta_{ij}\delta_{q\nu},
\end{equation}
where we also recall \eqref{cor 4.3 proof V}.
Equation \eqref{cor 4.3 proof V V^{-1} = I} implies that we can form $n_j \times N$ block matrices $\bm W^{(j)}$ where the $(q,s)$-entry of $j$-th block is given by $v_s^{(j,q)}$ and put these block matrices together to construct $\bm V^{-1}$,
\begin{equation} \label{cor 4.3 proof V^{-1} final}
    \bm V^{-1}=\begin{pmatrix}\bm W^{(2)}\\ \vdots\\ \bm W^{(r)}\end{pmatrix}, \quad \bm W^{(j)}_{q,s} = v_s^{(j,q)}, \quad 0 \leq q \leq n_j-1, \quad 0 \leq s \leq N-1.
\end{equation}

\subsubsection{Explicit Expression of $\boldsymbol{M}_{22}^{-1}$}

Recall $\bm M_{22}^{-1} = \bm R^{-1} \bm V^{-1}$. We shall write
\begin{equation}\label{eq:U-proof-M-inv}
    \bm M_{22}^{-1}=\begin{pmatrix}\bm U^{(2)}\\ \vdots\\ \bm U^{(r)}\end{pmatrix}, \quad
    \bm U^{(j)}_{b,s}=U^{(j)}_{b,s}, \quad 0\leq b\leq n_j-1, \quad 0\leq s\leq N-1.
\end{equation}
Applying \eqref{eq:Rinv-proof} and \eqref{cor 4.3 proof V^{-1} final}, we obtain $\bm U^{(j)} = \bm R^{(j)-1} \bm W^{(j)}$, or entry-wise,
\begin{equation}\label{eq:U-proof}
    U^{(j)}_{b,s}=\frac{e^{-t_j^2}}{\sqrt{\pi}}\sum_{q=b}^{n_j-1}(-1)^{q-b}2^q t_j^{q-b}\binom{q}{b}v_s^{(j,q)},
\end{equation}
which is the coefficient used in \eqref{eq:U-Gamma-explicit}.

\subsubsection{Explicit Expression of $K(x,y)$}

Now, we are ready to compute the multiple Hermite kernel $K(x,y)$. To this end, we first apply the considered setup to \eqref{defn of CD kernel for p} to obtain the standard Hermite kernel
\begin{equation} \label{eq:K0-proof-3.4}
K_0(x,y) = e^{-y^2}\sum_{m=0}^{n-1}\frac{\widehat H_m(x)\widehat H_m(y)}{h_m} = e^{-y^2}\sum_{m=0}^{n_1-1}\frac{\widehat H_m(x)\widehat H_m(y)}{h_m} + e^{-y^2}\sum_{t=0}^{N-1}\frac{\widehat H_{n_1+t}(x)\widehat H_{n_1+t}(y)}{h_{n_1+t}},
\end{equation}
where the last expression will be useful later.
Similarly, the current setup shows that the vector $\bm p_2(x)$, recall \eqref{defn of pi} and \eqref{defn of pi phi v, 1-2}, consists of the components
\begin{equation}\label{eq:residual-proof-p}
    p_s(x) = \widehat H_{n_1 + s}(x), \quad 0 \leq s \leq N-1
\end{equation}
the vector $\bm q_2(y)$, recall \eqref{defn of phi} and \eqref{defn of pi phi v, 1-2}, consists of the components
\begin{equation}
    \sigma_{j,b}(y) = \widehat H_b(y)e^{a_jy-y^2}, \quad 2 \leq j \leq r, \quad 0\leq b\leq n_j-1
\end{equation}
and the vector $\bm r(y)$, recall \eqref{defn of q}, consists of the components
\begin{equation}\label{eq:residual-proof}
    r_{j,b}(y)=\sigma_{j,b}(y)-\int_{\mathbb R}K_0(x,y) \sigma_{j,b}(x)\,\dd x
    = \widehat H_b(y)e^{a_jy-y^2}-e^{-y^2}\sum_{m=0}^{n-1}\frac{M^{(j)}_{m,b}}{h_m}\widehat H_m(y), 
\end{equation}
for $2 \leq j \leq r$ and $0\leq b\leq n_j-1$, where we used \eqref{defn of M^{(j)}_{m,b}} for the last equality. 

Applying \eqref{eq:residual-proof-p}, \eqref{eq:U-proof-M-inv}, and \eqref{eq:residual-proof} to Theorem~\ref{kernel reduction}, one has
\begin{multline}\label{eq:kernel-reduction-proof}
    K(x,y)-K_0(x,y) = \sum_{s=0}^{N-1}\sum_{j=2}^r\sum_{b=0}^{n_j-1}U^{(j)}_{b,s}\widehat H_{n_1+s}(x)\widehat H_b(y)e^{a_jy-y^2}\\
    -e^{-y^2}\sum_{s=0}^{N-1}\sum_{m=0}^{n-1}\frac{\widehat H_{n_1+s}(x)\widehat H_m(y)}{h_m}
    \sum_{j=2}^r\sum_{b=0}^{n_j-1}U^{(j)}_{b,s}M^{(j)}_{m,b}.
\end{multline}
We shall denote the inner sum on the last line by
\begin{align} \label{defn of Gamma_ms}
\Gamma_{m,s} := \sum_{j=2}^r\sum_{b=0}^{n_j-1}U^{(j)}_{b,s}M^{(j)}_{m,b}.
\end{align}
For $m=n_1+t$ where $0\leq t\leq N-1$, one has
\begin{equation}
\Gamma_{m,s} = \delta_{st}, \label{Gamma_ms 1}
\end{equation}
which is readily obtained from writing out $\bm M_{22}^{-1}\bm M_{22} = I$, recall \eqref{eq:U-proof-M} and \eqref{eq:U-proof-M-inv}. For $0\leq m\leq n_1-1$, one can apply \eqref{eq:U-proof} and \eqref{eq:G-derivative-proof} to the definition \eqref{defn of Gamma_ms} of $\Gamma_{m,s}$ and readily show that
\begin{equation}
    \Gamma_{m,s} = \sum_{j=2}^r\sum_{q=0}^{n_j-1}\left[\frac{\dd^q}{\dd t^q}t^m\right]_{t=t_j}v_s^{(j,q)} = \sum_{j=2}^r\sum_{q=0}^{\min(m,n_j-1)}q!\binom{m}{q}t_j^{m-q}v_s^{(j,q)}. \label{Gamma_ms 2}
\end{equation}

Using \eqref{Gamma_ms 1}, \eqref{Gamma_ms 2}, and \eqref{eq:K0-proof-3.4}, it follows from \eqref{eq:kernel-reduction-proof} that
\begin{align*}
    K(x,y) &= K_0(x, y) + \sum_{s=0}^{N-1}\sum_{j=2}^r\sum_{b=0}^{n_j-1}U^{(j)}_{b,s}\widehat H_{n_1+s}(x)\widehat H_b(y)e^{a_jy-y^2}\\
    &\qquad \qquad \quad -e^{-y^2}\sum_{s=0}^{N-1} \left(\sum_{m=0}^{n_1-1}\frac{\widehat H_{n_1+s}(x)\widehat H_m(y)}{h_m}
    \Gamma_{m,s} + \sum_{t=0}^{N-1}\frac{\widehat H_{n_1+s}(x)\widehat H_{n_1+t}(y)}{h_{n_1+t}}
    \delta_{st} \right)\\
    &= e^{-y^2}\sum_{m=0}^{n_1-1}\frac{\widehat H_m(x)\widehat H_m(y)}{h_m} + e^{-y^2}\sum_{t=0}^{N-1}\frac{\widehat H_{n_1+t}(x)\widehat H_{n_1+t}(y)}{h_{n_1+t}}\\
    &\qquad \qquad \quad + \sum_{s=0}^{N-1}\sum_{j=2}^r\sum_{b=0}^{n_j-1}U^{(j)}_{b,s}\widehat H_{n_1+s}(x)\widehat H_b(y)e^{a_jy-y^2}\\
    &\qquad \qquad \quad -e^{-y^2}\sum_{s=0}^{N-1} \sum_{m=0}^{n_1-1}\frac{\widehat H_{n_1+s}(x)\widehat H_m(y)}{h_m} \Gamma_{m,s} - e^{-y^2} \sum_{t=0}^{N-1}\frac{\widehat H_{n_1+t}(x)\widehat H_{n_1+t}(y)}{h_{n_1+t}}\\
    &=e^{-y^2}\sum_{m=0}^{n_1-1}\frac{\widehat H_m(y)}{h_m}\left(\widehat H_m(x)-\sum_{s=0}^{N-1}\Gamma_{m,s}\widehat H_{n_1+s}(x)\right)
    +\sum_{s=0}^{N-1}\sum_{j=2}^r\sum_{b=0}^{n_j-1}U^{(j)}_{b,s}\widehat H_{n_1+s}(x)\widehat H_b(y)e^{a_jy-y^2}.
\end{align*}
Using \eqref{app hk}, we come to the desired result.
 
\section{Proof of Theorems (MOPUC)} \label{appendix mopuc}

This section is in parallel with Section~\ref{section proofs} (for MOPRL), but here our focus is on MOPUC.
 
\subsection{Proof of Theorem~\ref{thm:typeII}} \label{sec: thm 1}

Since OPUC is a basis of $\mathcal{P}^{\TT}_n$, one can write $\Phi_{\vec{n}}(z)$ as a linear combination of them, i.e.,
\begin{equation}\label{eq:typeII-ansatz}
\Phi_{\vec{n}}(z) = \varphi_n(z) + \sum_{j=0}^{n-1} \gamma_j\,\varphi_j(z), \quad \gamma_j \in \C,
\end{equation}
where we recall that $\Phi_{\vec{n}}(z)$ is a monic polynomial.
By \eqref{eq:typeII MOPUC-orth} and Remark~\ref{rem 1}, it holds for $k=0,1, \cdots, n_1-1$ that
\begin{equation} \label{orthogonality proof thm 1}
\int_{\TT} \Phi_{\vec{n}}(z) \, \overline{\varphi_k(z)} \, \omega^{(1)}(z) \frac{|\dd z|}{2\pi} = 0.
\end{equation}
Substituting \eqref{eq:typeII-ansatz} into the above orthogonality condition, we get
\begin{equation}
\text{LHS of \eqref{orthogonality proof thm 1}} = \int_{\TT} \varphi_n(z) \, \overline{\varphi_k(z)} \, \omega^{(1)}(z) \frac{|\dd z|}{2 \pi} +  \sum_{j=0}^{n-1} \gamma_j \int_{\TT} \varphi_j(z) \, \overline{\varphi_k (z)} \, \omega^{(1)} \frac{|\dd z|}{2\pi} = \gamma_k h_k,
\end{equation}
for $k = 0, 1, \cdots, n_1 - 1$, but since the RHS of \eqref{orthogonality proof thm 1} is always zero, we have
$\gamma_0 = \cdots = \gamma_{n_1-1} = 0$. Hence,
\begin{equation} \label{eq:typeII-ansatz2}
\Phi_{\vec{n}}(z) = \varphi_n(z) + \sum_{k = n_1}^{n-1} \gamma_k \, \varphi_k(z) = \varphi_n (z) + \bm{\gamma}_2^\top \, \bm{\varphi}_2(z),
\end{equation}
where $\bm \gamma_2 = (\gamma_{n_1}, \cdots, \gamma_{n-1})^\top$.
By integrating \eqref{eq:typeII-ansatz2} against $\bm{\psi}_2(z)$ with respect to the arclength measure, where we use \eqref{eq:typeII MOPUC-orth} and Remark~\ref{rem 1}, one has
\[
\bm{0} = \int_{\TT} \left( \varphi_n (z) + \bm \gamma_2^\top  \bm{\varphi}_2(z) \right) \overline{\bm{\psi}_2(z)}^\top \frac{|\dd z|}{2\pi} 
\iff \bm{0} = \bm{u}^\top + \bm{\gamma}_2^\top \bm{\mathcal{M}}_{22}
\iff \bm{\gamma}_2^\top = -\bm{u}^\top \bm{\mathcal{M}}_{22}^{-1},
\]
which proves Theorem~\ref{thm:typeII}.

%%%%%%%%%%%%%%%%%%%%%%%%%%%%%

\subsection{Proof of Theorem~\ref{thm:typeI}} \label{sec: thm 2}

Since type I MOPUC $\Lambda_j(z)$ for $j=0, \cdots, r$ can be written as a linear combination of \(\{\varphi_k(z)\}_{k=0}^{n-1}\), one has
\[
\Lambda_j(z) = \sum_{k=0}^{n_j-1} \delta_k^{(j)} \varphi_k(z), \quad \delta_k^{(j)} \in \C,
\]
and the type I linear form $\Psi_{\vec{n}}(z)$ is expressed by
\begin{equation} \label{Psi = delta^T psi}
    \Psi_{\vec{n}}(z) = \sum_{j=1}^{r} \sum_{k=0}^{n_j-1} \delta_k^{(j)} \varphi_k(z) \omega^{(j)}(z) = \bm{\delta}^\top \bm{\psi}(z),
\end{equation}
where \(\bm{\delta} = \left( \delta_0^{(1)}, \cdots, \delta_{n_1 - 1}^{(1)}, \delta_0^{(2)}, \cdots, \delta_{n_2 - 1}^{(2)}, \boldsymbol{\cdots}, \delta_0^{(r)}, \cdots, \delta_{n_r - 1}^{(r)} \right)^\top \).
By \eqref{eq:typeI MOPC-orth} and Remark~\ref{rem 1}, we have
\begin{equation}
    \overline{\bm{\delta}}^\top \int_{\TT} \varphi_m(z) \, \overline{\bm{\psi}(z)} \, \frac{|\dd z|}{2 \pi} = \delta_{m, n-1}
\end{equation}
for $m = 0, \cdots, n-1$. 
This gives the linear system
\begin{equation}\label{eq:Md-e}
\overline{\bm{\delta}}^\top \bm{\mathcal{M}}^\top  = \bm{e}_n^\top \iff
\bm{\delta}^\top = \bm{e}_n^\top \overline{\bm{\mathcal{M}}}^{\top-1}.
\end{equation}
Substituting this back into \eqref{Psi = delta^T psi} completes the proof of Theorem~\ref{thm:typeI}.

%%%%%%%%%%%%%%%%%%%%%%%%%%%%%%%%%%

%%%%%%%%%%%%%%%%%%%%%

\subsection{Proof of Theorem~\ref{thm:kernel}}

 We consider the natural path from $\vec{n}_0=\vec{0}$ to $\vec{n}_n=\vec n$ and the associated
families of type II MOPUC and type I linear forms
\begin{equation}
    \Phi_j(z)=\Phi_{\vec n_j}(z),     \qquad \Psi_j(\xi)=\Psi_{\vec n_{j+1}}(\xi),  \qquad j=0,\ldots,n-1.
\end{equation}
Recall that the MOPUC Christoffel-Darboux kernel is defined by
\begin{equation}
    \mathcal K(z,\xi)=\sum_{j=0}^{n-1}\Phi_j(z)\overline{\Psi_j(\xi)}.
\end{equation}
The proof of Theorem~\ref{thm:kernel} is the same as the proof of Theorem~\ref{kernel reduction} in the MOPRL case, see
Section~\ref{Proof of Kernel MOPRL}, after replacing
\begin{equation}
    \bm p,\quad \bm q,\quad \bm M,\quad K,\quad K_0,\quad \bm r
\end{equation}
respectively by
\begin{equation}
    \bm\varphi,\quad \overline{\bm\psi},\quad \bm{\mathcal M},\quad \mathcal K,\quad \mathcal K_0,\quad \bm\pi.
\end{equation}
The analogue of Lemma~\ref{lem:kernel uniqueness} is the following.

\begin{lemma}\label{lem:MOPUC-kernel-uniqueness}
For each fixed $z$, there is a unique function $L(z,\cdot)$ in
\begin{equation}\label{defn of Wn MOPUC}
    \mathcal W_{\vec n}^{\mathbb T}:=
    \operatorname{span}\{\overline{\psi_1},\cdots,\overline{\psi_n}\},
\end{equation}
where $\psi_1,\cdots,\psi_n$ are the entries of $\bm{\psi}$, such that
\[
\int_{\mathbb T} L(z,\xi)\varphi(\xi)\frac{|\dd \xi|}{2\pi} = \varphi(z), \quad \varphi \in \mathcal P_{n-1}^{\TT}.
\]
Moreover, the MOPUC Christoffel-Darboux kernel admits the representation
\begin{equation}\label{K=psiMphiinv MOPUC}
    \mathcal K(z,\xi)
    = \overline{\bm{\psi}(\xi)}^\top  \bm{\mathcal M}^{-1}  \bm{\varphi}(z).
\end{equation}
\end{lemma}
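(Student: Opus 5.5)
I will follow the proof of Lemma~\ref{lem:kernel uniqueness} line by line, with conjugations inserted where the sesquilinear pairing requires them. The argument has three steps: uniqueness, the reproducing property of $\mathcal K$, and the reproducing property of the explicit formula.

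\emph{Uniqueness.} Suppose $L_1(z,\cdot),L_2(z,\cdot)\in\mathcal W_{\vec n}^{\mathbb T}$ both reproduce $\mathcal P_{n-1}^{\TT}$. Then their difference has the form $\overline{\bm\psi(\xi)}^\top\bm a(z)$ for some vector $\bm a(z)\in\C^n$. Since the difference annihilates $\mathcal P_{n-1}^{\TT}$, I will test it against the basis $\varphi_0,\ldots,\varphi_{n-1}$. This yields
\[
\int_{\TT}\varphi_m(\xi)\overline{\bm\psi(\xi)}^\top\frac{|\dd\xi|}{2\pi}\,\bm a(z)=0,\qquad m=0,\ldots,n-1,
\]
which is exactly $\bm{\mathcal M}\bm a(z)=\bm 0$ by \eqref{eq:M-def}. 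Perfectness (normality) of the system makes $\bm{\mathcal M}$ nonsingular. Indeed, $\bm{\mathcal M}$ is obtained from the MOPUC moment matrix by invertible lower-triangular changes of basis, as in Remark~\ref{rem 1}, and that moment matrix is nonsingular because $\Psi_{\vec n}$ is unique. Hence $\bm a(z)=\bm 0$.

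\emph{$\mathcal K$ lies in $\mathcal W_{\vec n}^{\mathbb T}$ and reproduces.} Along the natural path every intermediate multi-index $\vec n_{j+1}$ is componentwise at most $\vec n$. So each $\Psi_j=\sum_k\Lambda_k\omega^{(k)}$ has $\deg\Lambda_k\le n_k-1$, and because the weights $\omega^{(k)}$ are real and nonnegative, $\overline{\Psi_j}$ lies in the span of the $\overline{\varphi_b}\,\omega^{(k)}$, that is, in $\mathcal W_{\vec n}^{\mathbb T}$. The polynomials $\Phi_0,\ldots,\Phi_{n-1}$ are monic of degrees $0,\ldots,n-1$, so they form a basis of $\mathcal P_{n-1}^{\TT}$. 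Expanding $\varphi=\sum_m a_m\Phi_m$ and applying the biorthogonality \eqref{eq:biorth} gives
\[
\int_{\TT}\mathcal K(z,\xi)\varphi(\xi)\frac{|\dd\xi|}{2\pi}=\sum_j a_j\Phi_j(z)=\varphi(z).
\]
Here I must check that the pairing in \eqref{eq:biorth} is $\int\Phi_k\overline{\Psi_m}$, which is exactly the pairing appearing in this integral.

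\emph{The explicit kernel.} Set $\widehat{\mathcal K}(z,\xi):=\overline{\bm\psi(\xi)}^\top\bm{\mathcal M}^{-1}\bm\varphi(z)$, which clearly lies in $\mathcal W_{\vec n}^{\mathbb T}$ as a function of $\xi$. Integrating against $\varphi_m(\xi)$ gives $\bm e_{m+1}^\top\bm{\mathcal M}\bm{\mathcal M}^{-1}\bm\varphi(z)=\varphi_m(z)$, because the $(m+1)$-st row of $\bm{\mathcal M}$ is $\int\varphi_m\overline{\bm\psi}^\top\,|\dd\xi|/2\pi$. By linearity $\widehat{\mathcal K}$ reproduces all of $\mathcal P_{n-1}^{\TT}$, so uniqueness gives $\mathcal K=\widehat{\mathcal K}$.

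The main obstacle is conjugation bookkeeping: the row of $\bm{\mathcal M}$ must match the pairing exactly, and $\overline{\Psi_j}$ must lie in the conjugated span. Both points rely on $\omega^{(k)}$ being real-valued.
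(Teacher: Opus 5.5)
Your proposal is correct and is the argument the paper intends. The paper obtains this lemma by transcribing the proof of Lemma~\ref{lem:kernel uniqueness} under the substitutions $\bm p\mapsto\bm\varphi$, $\bm q\mapsto\overline{\bm\psi}$, $\bm M\mapsto\bm{\mathcal M}$. Your three steps are exactly that transcription: uniqueness via $\bm{\mathcal M}\bm a(z)=\bm 0$, reproduction by $\mathcal K$ via \eqref{eq:biorth}, and reproduction by $\overline{\bm\psi(\xi)}^\top\bm{\mathcal M}^{-1}\bm\varphi(z)$ row by row. You also justify the invertibility of $\bm{\mathcal M}$ a bit more carefully than the paper does.

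One small correction to your closing remark: $\overline{\Psi_j}\in\mathcal W_{\vec n}^{\mathbb T}$ holds for any complex linear combination of the $\psi_i$, without using realness of the weights. Realness actually enters through the cited biorthogonality lemma and through identifying the type~I system matrix with $\overline{\bm{\mathcal M}}$.
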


Indeed, by Lemma~\ref{lem:MOPUC-kernel-uniqueness} and the same block-inverse computation as in
Theorem~\ref{kernel reduction}, we obtain
\begin{equation}
    \mathcal K(z,\xi)-\mathcal K_0(z,\xi)  = \bm\pi(\xi)^\top  \bm{\mathcal M}_{22}^{-1} \bm\varphi_2(z),
\end{equation}
where
\begin{equation}
    \bm\pi(\xi)^\top = \overline{\bm{\psi}_2(\xi)}^\top -\int_{\mathbb T}\mathcal K_0(s,\xi) \overline{\bm{\psi}_2(s)}^\top \frac{|\dd s|}{2\pi}.
\end{equation}
This proves Theorem~\ref{thm:kernel}.
\section{Proof of Corollaries \ref{cor: MOPUC type II eg 1} -- \ref{cor: MOPUC kernel eg 2}}
\label{section example MOPUC}

In this section, we consider the MOPUC $\Phi_{\vec{n}}(z)$ and $\Psi_{\vec{n}}(z)$ defined by the multi-index 
$\vec{n} = (n-r, 1, \cdots, 1) \in \N^{r+1}$ and the weights of orthogonality on $\TT$,
\begin{equation} \label{eg 1: weights}
\omega^{(1)}(z) := 1,\quad
\omega^{(j+1)}(z) := \mathbf{1}_{E_j}(\arg z),\quad
E_j=[\alpha_j,\beta_j] \subseteq [0, 2\pi), \quad j=1,\cdots,r,
\end{equation}
where we assumed the interval of angles $E_1, \cdots, E_r$ are pairwise disjoint and has the equal length $\Delta$.
Before we proceed with the proof, we shall introduce some notation.
For $E_j = [\alpha_j,\beta_j] \subseteq [0,2\pi)$ and $\ell \in \Z$, we define
\begin{equation}\label{defn of mu}
\mu_{E_j}(\ell) :=\int_{\TT} z^\ell \,\mathbf{1}_{E_j} (\arg z) \, \frac{|\dd z|}{2\pi} = 
\begin{dcases}
a_0, & \ell=0,\\
a_{\ell} \, x_j^\ell, & \ell\neq 0,
\end{dcases}
\end{equation}
where we recall \eqref{defn of a_l} and \eqref{defn of x_j} for $a_\ell$ and $x_j$. We note that the simple relation, $\overline{\mu_{E_j}(\ell)} = \mu_{E_j}(-\ell)$.

%%%%%%%%%%%%%%%%%%%%%%%%%%%%%%%%%%%%%%%%

\subsection{Proof of Corollary~\ref{cor: MOPUC type II eg 1}}

If the weight of orthogonality is given by $\omega^{(1)}(z) = 1$ on $\TT$, it is an easy exercise to show that the corresponding OPUC of degree $n$ is given by
\begin{align} \label{eg 1: varphi_n}
\varphi_n (z) = z^n,
\end{align}
the norm $h_n$ of $\varphi_n (z)$ is $1$, and the Christoffel-Darboux kernel is given by
\begin{align} \label{eg 1 K0}
\mathcal{K}_0(z, \xi) = \sum_{j=0}^{n-1} \left( z \Bar{\xi} \right)^j = \frac{1 - \left( z \Bar{\xi} \right)^n}{1 - z \Bar{\xi}}.
\end{align}
Applying \eqref{eg 1: varphi_n} and \eqref{eg 1: weights} into \eqref{eq:pi-primary} and \eqref{eq:chi-stacked}, we have
\begin{align}\label{eg 1: varphi, psi}
\begin{aligned}
\boldsymbol{\varphi}(z) &= \big( 1, z,\cdots, z^{n-1} \big)^\top,\\
\boldsymbol{\psi}(z) &= \big( 1, z,\cdots, z^{n-r-1}, \mathbf{1}_{E_1}(\arg z), \mathbf{1}_{E_2}(\arg z), \cdots, \mathbf{1}_{E_r}(\arg z) \big)^\top.
\end{aligned}
\end{align}
By \eqref{eq:v-def}, one can directly compute $\boldsymbol{u}$ to be
\[
\boldsymbol{u} = ( \underbrace{0, \cdots, 0}_{n-r}, \mu_{E_1}(n), \mu_{E_2}(n), \cdots, \mu_{E_r}(n) )^\top
\]
using \eqref{defn of mu}.
Based on our convention, we denote the first $n-r$ entries of the above vectors with subindex $1$ and the remaining $r$ entries with subindex $2$.
Similarly to the above computations, one can explicitly compute the mixed-moment matrix
\[
\bm{\mathcal{M}} = \big( \underbrace{\mathcal{M}^{(1)}}_{n \times (n-r)} \;  \underbrace{\mathcal{M}^{(2)}}_{n \times 1} \;  \cdots \; \underbrace{\mathcal{M}^{(r+1)}}_{n \times 1} \big).
\]
Since the $(a,b)$-entry of $k$-th block matrix in $\bm{\mathcal M}$ is given by
\begin{align} \label{eg 1 M block entry}
\mathcal{M}^{(k)}_{a, b} = \begin{dcases}
    \begin{dcases}
        1, & a = b,\\
        0, & \text{else},
    \end{dcases}, & k=1, \\
    \mu_{E_{k-1}}(a-b), & k \geq 2,
\end{dcases}
\end{align}
one can find the $(q, p)$-entry of $\bm{\mathcal{M}}_{22}$, see also \eqref{eg 1 M22}, as 
\begin{align} \label{q,p entry of M22}
\left( \bm{\mathcal{M}}_{22} \right)_{q,p} = a_{l_q} x_p^{l_q}, \quad 1 \leq p, q \leq r.
\end{align}
By Theorem~\ref{thm:typeII}, the considered type II MOPUC is expressed as
\begin{align} \label{eg 1 expression 1}
\Phi_{\vec{n}}(z) = z^n - \bm{\gamma}^\top \bm{\varphi}_2(z)
\end{align}
where $\bm \gamma = (\gamma_1, \gamma_2 \cdots, \gamma_r)$ is an $r$-column vector such that 
\begin{align} \label{gamma M22 = u2}
\bm \gamma^\top \bm{\mathcal{M}}_{22} = \boldsymbol{u}_2^\top.
\end{align}
To solve \eqref{gamma M22 = u2} for $\bm \gamma^\top$ effectively, we consider two polynomials of degree $r-1$,
\[
Y(x) = \sum_{q=1}^r a_{l_q} \gamma_q x^{q-1} \quad \text{and} \quad Z(x) = a_n \left( x^r - \prod_{m=1}^r (x - x_m) \right).
\]
By \eqref{q,p entry of M22} and \eqref{gamma M22 = u2}, one can readily verify that
\[
Y(x_p) = Z(x_p) = a_n x_p^r, \quad p = 1, 2, \cdots, r.
\]
Since degree $r-1$ polynomials that give the same values at $r$ points must be identically the same, we have
\begin{align}
    a_n \left( x^r - \prod_{m=1}^r (x - x_m) \right) = \sum_{q=1}^r a_{l_q} \gamma_q \, x^{q-1} \iff \sum_{s=0}^{r-1} a_n (-1)^{r-s+1} e_{r-s}(x_1, \cdots, x_r) x^s = \sum_{q=1}^r a_{l_q} \gamma_q \, x^{q-1}
\end{align}
where the last expression is due to Vieta's formulas. This gives the formula,
\begin{align} \label{gamma q formula}
    \gamma_q = \frac{a_n}{a_{l_q}}(-1)^{r-q} e_{r-q+1}(x_1, \cdots, x_r), \quad 1 \leq q \leq r
\end{align}
and together with \eqref{eg 1 expression 1} and \eqref{eg 1: varphi, psi}, we finish the proof of \eqref{MOPUC type II eg 1}.

We can rewrite \eqref{MOPUC type II eg 1} as
\[
\Phi_{\vec{n}}(z) = z^{n-r} \left( z^r - \sum_{q=1}^r \gamma_q z^{q-1} \right),
\]
therefore, there exist a zero of multiplicity $n-r$ at the origin and $r$ zeros at $\zeta_1, \zeta_2, \cdots, \zeta_r \in \C$ where they are solutions of $R(z) = 0$ with
\[
R(z) := z^r - \sum_{q=1}^r \gamma_q z^{q-1}.
\]
Since $R(z)$ admits the unique factorization, combining with Vieta's formulas, one gets
\[
R(z) = \prod_{j=1}^r (z - \zeta_j) = z^r + \sum_{s = 0}^{r-1} (-1)^{r-s} e_{r-s}(\zeta_1, \cdots, \zeta_r) z^s.
\]
Thus, it holds for $q = 1, 2, \cdots, r$ that
\[
-\gamma_{q} = (-1)^{r-q+1} e_{r-q+1} (\zeta_1, \cdots, \zeta_r) \implies |\gamma_q| = |e_{r-q+1} (\zeta_1, \cdots, \zeta_r)|.
\]
By \eqref{gamma q formula}, we get
\[
|e_{r-q+1} (\zeta_1, \cdots, \zeta_r)| = \left| \frac{a_n}{a_{l_q}} \right| |e_{r-q+1} (x_1, \cdots, x_r)|
\]
By the definition of $e_s(\zeta_1, \cdots, \zeta_r)$, see \eqref{defn of esp}, one has trivially
\[
|e_s(\zeta_1, \cdots, \zeta_r)| \leq \sum_{1 \leq i_1 < i_2 < \cdots < i_s \leq r} |\zeta_{i_1}| |\zeta_{i_2}| \cdots |\zeta_{i_s}| \leq \binom{r}{s} \left( \max_{1\leq j \leq r} |\zeta_j| \right)^s.
\]
Combining the last two displays, we obtain \eqref{eg 1 escaping} and finish the proof of this corollary.

%%%%%%%%%%%%%%%%%%%%%%%%%%%%%%%%%%%%%%%%%

\subsection{Proof of Corollary~\ref{cor: MOPUC type I eg 1}}

By \eqref{eg 1 M block entry}, one can write $\bm{\mathcal{M}}$ as
\[
\bm{\mathcal{M}} = \begin{pmatrix}
\begin{array}{c|c}
    \bm{I}      & \bm{\mathcal{M}}_{12} \\
    \noalign{\vskip 2pt\hrule\vskip 2pt}
    \bm{0} & \bm{\mathcal{M}}_{22}
\end{array}
\end{pmatrix}
\]
where $\bm I$ is the $(n-r)\times(n-r)$ identity matrix, $\bm{0}$ is the $r \times (n-r)$ zero matrix,
\[
\bm{\mathcal{M}}_{12} = \begin{pmatrix}
        \mu_{E_{1}}(0)     & \mu_{E_{2}}(0)     & \cdots & \mu_{E_{r}}(0)     \\
        \mu_{E_{1}}(1)     & \mu_{E_{2}}(1)     & \cdots & \mu_{E_{r}}(1)     \\
        \vdots             & \vdots             & \cdots & \vdots             \\
        \mu_{E_{1}}(n-r-1) & \mu_{E_{2}}(n-r-1) & \cdots & \mu_{E_{r}}(n-r-1)
\end{pmatrix}
\]
is an $(n-r) \times r$ matrix, and
\begin{align}
\bm{\mathcal{M}}_{22} = \begin{pmatrix}
\mu_{E_{1}}(n-r)   & \mu_{E_{2}}(n-r)   & \cdots & \mu_{E_{r}}(n-r)    \\
\mu_{E_{1}}(n-r+1) & \mu_{E_{2}}(n-r+1) & \cdots & \mu_{E_{r}}(n-r+1)\\
\vdots             & \vdots             & \cdots & \vdots             \\
\mu_{E_{1}}(n-1)   & \mu_{E_{2}}(n-1)   & \cdots & \mu_{E_{r}}(n-1)    
\end{pmatrix}
\end{align}
is an $r \times r$ matrix. Using \eqref{defn of mu}, $\bm{\mathcal{M}}_{22}$ is expressed by
\begin{align} \label{eg 1 M22}
\bm{\mathcal{M}}_{22} = \begin{pmatrix}
    a_{l_1} x_1^{l_1} & a_{l_1} x_2^{l_1} & \cdots & a_{l_1} x_r^{l_1}\\
    a_{l_2} x_1^{l_2} & a_{l_2} x_2^{l_2} & \cdots & a_{l_2} x_r^{l_2}\\
    \vdots            & \vdots            & \ddots & \vdots           \\
    a_{l_r} x_1^{l_r} & a_{l_r} x_2^{l_r} & \cdots & a_{l_r} x_r^{l_r}
\end{pmatrix}
\end{align}
where we recall $l_q = n-r+q-1$. One can further observe that
\begin{align} \label{eg 1 M22 decomp}
    \bm{\mathcal{M}}_{22} = D_a V D_x,
\end{align}
where $V$ is the Vandermonde matrix $V_{p,q} = x_{p}^{q-1}$ for $1 \leq p, q \leq r$, and 
\[
D_a = \operatorname{diag}(a_{l_1}, a_{l_2}, \cdots, a_{l_r}), \quad D_x = \operatorname{diag}(x_1^{n-r}, x_2^{n-r}, \cdots, x_r^{n-r}).
\]
Using the block structure of $\bm{\mathcal{M}}$, it can be readily shown that
\[
\overline{\bm{\mathcal{M}}}^{\top-1} = \begin{pmatrix}
\begin{array}{c|c}
    \bm I      & \bm{0} \\
    \noalign{\vskip 2pt\hrule\vskip 2pt}
    -\overline{\bm{\mathcal{M}}_{22}}^{\top-1} \overline{\bm{\mathcal{M}}_{12}}^{\top} & \overline{\bm{\mathcal{M}}_{22}}^{\top-1}
\end{array}
\end{pmatrix}
\]
where $\bm{0}$ is now the $(n-r) \times r$ zero matrix. 
Then, Theorem~\ref{thm:typeI} yields that 
\begin{align} \label{eg 1 expression 2}
\Psi_{\vec{n}}(z) = (b_0, b_1, \cdots, b_{n-r-1}, c_1, c_2, \cdots, c_r) \, \bm{\psi}(z) = \sum_{l = 0}^{n-r-1} b_l \, z^l +  \sum_{j=1}^r c_j \mathbf{1}_{E_j}(\arg z) 
\end{align}
where we recall \eqref{eg 1: varphi, psi} and the fact that $\bm{b} = (b_0, b_1, \cdots, b_{n-r-1})$ is the last row of $-\overline{\bm{\mathcal{M}}_{22}}^{\top-1} \overline{\bm{\mathcal{M}}_{12}}^{\top}$ and $\bm{c} = (c_1, c_2, \cdots, c_r)$ is the last row of $\overline{\bm{\mathcal{M}}_{22}}^{\top-1}$.
In other words, if we set $\bm{e}_r$ as the standard basis column vector $(0, \cdots, 0, 1)^\top \in \R^r$, we have
\[
\bm{e}_r^\top \, \overline{\bm{\mathcal{M}}_{22}}^{\top-1} = \bm{c} \iff \bm{\mathcal{M}}_{22} \, \overline{\bm{c}}^\top = \bm{e}_r.
\]
Using \eqref{eg 1 M22 decomp}, one can further simplify this relation as
\begin{equation}\label{eq:c-with-vand}
\overline{\bm{c}}^\top = a_{n-1}^{-1} \, D_x^{-1} V^{-1} \bm{e}_r.
\end{equation}
It is well-known, see for example \cite{Klinger}, that the inverse of the Vandermonde matrix is given by
\begin{equation} \label{eq:Vinv_entry}
(V^{-1})_{j,k} = \frac{(-1)^{r-k}\; e_{r-k} \left(t_1,\cdots,\widehat{t_j},\cdots,t_r \right)}
{\displaystyle\prod_{\substack{m=1 \\ m \neq j}}^r (t_j - t_m)},\quad 1\leq j,k \leq r,
\end{equation}
where $e_s(t_1,\cdots,\widehat{t_j},\cdots,t_r) \equiv e_s(\widehat{t_j})$ denotes the elementary symmetric polynomial of degree $s$ in the $r-1$ variables $\{t_\ell:\ell\neq j\}$.
This implies that the $j$-th entry of the last column $V^{-1} \bm{e}_r$ is given by
\begin{align} \label{last column of inv V}
(V^{-1} \bm{e}_r)_j = \prod_{\substack{m=1 \\ m \neq j}}^r \frac{1}{t_j - t_m}.
\end{align}

Therefore, from \eqref{eq:c-with-vand} we get 
\[
\left( \overline{\bm{c}}^\top \right)_j = a_{n-1}^{-1} \, x_j^{-(n-r)} \prod_{\substack{m=1 \\ m \neq j}}^r \frac{1}{x_j - x_m}, \quad j = 1, \cdots, r.
\]
By taking the complex conjugate, one can readily obtain \eqref{eg 1 defn of cj}.
Moreover, using $\bm{c}$, one can express $\bm{b}$ as
\[
\bm{b} = -\bm{c} \, \overline{\bm{\mathcal{M}}_{12}}^{\top},
\]
which means its $l$-th entry satisfies
\begin{align} \label{eg 1 expression 3}
b_l = -\sum_{k=1}^r c_k \, \mu_{E_k}(-l), \quad l = 0, 1, \cdots, n-r-1.
\end{align}
Combining \eqref{eg 1 expression 2}, \eqref{eg 1 expression 3}, and \eqref{defn of mu} finishes the proof of \eqref{MOPUC type I eg 1}.

%%%%%%%%%%%%%%%%%%%%%%%%%%%%%%%%%%%%%%%%%

\subsection{Proof of Corollary~\ref{cor: MOPUC kernel eg 1}}

To obtain the kernel reduction formula in the considered setting, we first need to express $\bm{\pi}(\xi)$ explicitly. Recall \eqref{eq:q-def} and \eqref{eg 1 K0}; we then have
\[
\bm{\pi}(\xi) = \bm{\psi}_2(\xi) - \sum_{j=0}^{n-1} \xi^j \int_{\TT} s^{-j} \, \bm{\psi}_2(s) \, \frac{|\dd s|}{2\pi}.
\]
By \eqref{eg 1: varphi, psi}, the $p$-th entry of $\bm{\pi}(\xi)$ is
\[
\left(\bm{\pi}(\xi) \right)_p = \mathbf{1}_{E_p}(\arg \xi) - \sum_{j=0}^{n-1} \xi^j \int_{\TT} s^{-j} \, \mathbf{1}_{E_p}(\arg s) \, \frac{|\dd s|}{2\pi} = \mathbf{1}_{E_p}(\arg \xi) - \sum_{j=0}^{n-1} \xi^j \mu_{E_p}(-j), \quad p = 1, \cdots, r.
\]
By \eqref{eg 1 M22 decomp} and \eqref{eg 1: varphi, psi}, we have
\begin{align} \label{eg 1 M22 varphi2}
\begin{aligned}
\left( \bm{\mathcal{M}}_{22}^{-1} \,
\bm{\varphi}_2(z) \right)_p &= \sum_{k=1}^r x_p^{-(n-r)} (V^{-1})_{p,k} \, a_{l_k}^{-1} \, z^{l_k}\\
&= \frac{x_p^{-(n-r)}}{\prod_{\substack{m=1 \\ m \neq p}}^r (x_p - x_m)} \sum_{k=1}^r (-1)^{r-k} e_{r-k}(\hat{x}_p) a_{l_k}^{-1} \, z^{l_k},
\end{aligned}
\end{align}
where we used \eqref{eq:Vinv_entry} for the second equality.
Thus, by Theorem~\ref{thm:kernel}, one can readily obtain \eqref{MOPUC kernel eg 1}.

%%%%%%%%%%%%%%%%%%%%%%%%%%%%%%%%%%%%%%%%%

\subsection{Proof of Corollary~\ref{cor: MOPUC type II eg 2}}

Assume that
\begin{equation} \label{eg 2 assumption}
x_j=x_1\rho^{j-1}, \qquad \rho=e^{2\pi i/r}, \qquad j=1,\ldots,r.
\end{equation}
Then \(x_1,\ldots,x_r\) are precisely the roots of
\begin{equation} \label{eg 2 defn of W}
W(x):=x^r-x_1^r.
\end{equation}
Hence
\begin{equation} \label{eg 2 W vieta}
\prod_{j=1}^r (x-x_j)=W(x)=x^r-x_1^r.
\end{equation}
By Vieta's formulas,
\begin{equation} \label{eg 2 claim on esp}
e_k(x_1,\ldots,x_r)=\begin{dcases} 0, & 1\leq k\leq r-1,\\ (-1)^{r-1}x_1^r, & k=r. \end{dcases}
\end{equation}
Substituting \eqref{eg 2 claim on esp} into \eqref{MOPUC type II eg 1}, all terms vanish except the term corresponding to \(q=1\). Since \(l_1=n-r\), we obtain
\[
\Phi_{\vec n}(z)=z^n-\left(\frac{a_n}{a_{n-r}}x_1^r\right)z^{n-r}=z^{n-r}\left(z^r-\frac{a_n}{a_{n-r}}x_1^r\right).
\]
Thus \(z=0\) is a zero of multiplicity \(n-r\). The nonzero zeros satisfy \(z^r=(a_n/a_{n-r})x_1^r\). Writing \(x_1=e^{i\theta_1}\) and \(\frac{a_n}{a_{n-r}}=\left|\frac{a_n}{a_{n-r}}\right|e^{i\arg(a_n/a_{n-r})}\), with \(\arg(a_n/a_{n-r})\in\{0,\pi\}\), gives
\[
\zeta_k=\left|\frac{a_n}{a_{n-r}}\right|^{1/r}\exp\left(i\left(\theta_1+\frac{\arg(a_n/a_{n-r})+2\pi k}{r}\right)\right), \qquad k=0,\ldots,r-1.
\]
Finally, by the definition of \(a_\ell\),
\[
\left|\frac{a_n}{a_{n-r}}\right|=\left|\frac{n-r}{n}\frac{\sin\left(\frac{n\Delta}{2}\right)}{\sin\left(\frac{(n-r)\Delta}{2}\right)}\right|,
\]
which gives the stated formula. Since \(a_n a_{n-r}\neq 0\), these \(r\) nonzero zeros are simple.
\subsection{Proof of Corollary~\ref{cor: MOPUC type I eg 2}}

Under the equally spaced assumption, one can simplify \(c_j\) in \eqref{eg 1 defn of cj}. By \eqref{eg 2 W vieta}, we have
\[
W'(x_j)=\prod_{\substack{m=1\\ m\neq j}}^r (x_j-x_m).
\]
On the other hand, \eqref{eg 2 defn of W} gives \(W'(x_j)=r x_j^{r-1}\). Therefore
\[
c_j=\frac{(-1)^{r-1}x_j^{n-2}}{a_{n-1}}\frac{\prod_{m=1}^r x_m}{r x_j^{r-1}}.
\]
Since \(\prod_{m=1}^r x_m=e_r(x_1,\ldots,x_r)=(-1)^{r-1}x_1^r\) by \eqref{eg 2 claim on esp}, we obtain
\[
c_j=\frac{x_j^{n-2}x_1^r}{r a_{n-1}x_j^{r-1}}.
\]
Finally, since \(x_j^r=x_1^r\), this becomes
\[
c_j=\frac{x_j^{n-1}}{r a_{n-1}}.
\]
Substituting this expression for \(c_j\) into \eqref{MOPUC type I eg 1} gives \eqref{MOPUC type I eg 2}.

%%%%%%%%%%%%%%%%%%%%%%%%%%%%%%%%%%%%%%%%%

\subsection{Proof of Corollary~\ref{cor: MOPUC kernel eg 2}}

For \(p=1,\ldots,r\), define
\begin{equation} \label{eg 2 defn of S}
S(x)=\sum_{i=0}^{r-1}x_p^{r-1-i}x^i.
\end{equation}
Then \(S(x)=(x^r-x_p^r)/(x-x_p)\). Since \(x_p^r=x_1^r\) by \eqref{eg 2 assumption}, it follows from \eqref{eg 2 defn of W} and \eqref{eg 2 W vieta} that
\begin{equation} \label{eg 2 expression of S}
S(x)=\frac{W(x)}{x-x_p}=\prod_{\substack{m=1\\ m\neq p}}^r (x-x_m)=\sum_{k=0}^{r-1}(-1)^{r-1-k}e_{r-1-k}(\hat{x}_p)x^k.
\end{equation}
Comparing \eqref{eg 2 defn of S} and \eqref{eg 2 expression of S}, we obtain
\[
(-1)^{r-k}e_{r-k}(\hat{x}_p)=x_p^{r-k}, \qquad k=1,\ldots,r.
\]
Using this relation in \eqref{eg 1 M22 varphi2}, and since \(W'(x_p)=r x_p^{r-1}\), one has
\[
\left( \bm{\mathcal{M}}_{22}^{-1} \bm{\varphi}_2(z) \right)_p = \frac{x_p^{-(n-r)}}{W'(x_p)}\sum_{k=1}^r x_p^{r-k}a_{l_k}^{-1}z^{l_k}=\frac{1}{r}\sum_{k=1}^r\frac{z^{l_k}}{a_{l_k}x_p^{l_k}}=\frac{1}{r}\sum_{k=1}^r\frac{z^{l_k}}{a_{l_k}x_1^{l_k}}\rho^{-(p-1)l_k}.
\]
Therefore, the kernel reduction formula under the equally spaced assumption becomes
\[
\mathcal K(z,\xi)-\mathcal K_0(z,\xi)=\frac{1}{r}\sum_{p=1}^r\left(\mathbf{1}_{E_p}(\arg \xi)-\sum_{j=0}^{n-1}\xi^{-j}a_jx_p^j\right)\left(\sum_{k=1}^r\frac{z^{l_k}}{a_{l_k}x_1^{l_k}}\rho^{-(p-1)l_k}\right),
\]
which gives \eqref{MOPUC kernel eg 2} after changing the order of summation.

\section{External Source Models}\label{section baik}

In this section, we explain how the kernel reduction formula in Theorem~\ref{kernel reduction}
recovers two earlier formulae for random matrix models with external source.
First, we recover Baik's finite-rank reduction formula \cite[Theorem 1]{Bai09}.  We then explain how
the same mixed-moment formalism also contains the earlier formula
of Zinn-Justin \cite[Equation (3.10)]{ZinnJustin1997} for the kernel in terms of external source parameters and the orthogonal polynomial with respect to the reference weight.

\subsection{A review of the external source model}

The connection between multiple orthogonal polynomials and random matrix theory
became especially prominent through Hermitian random matrix models with external
source.  In such models, introduced by Brezin and Hikami \cite{BreHi1, BreHi2}, one considers probability measures of the form
\begin{align} \label{prob distribution}
    \frac{1}{Z_n}\exp\{-\operatorname{Tr}(V(M)-AM)\}\,\dd M
\end{align}
on Hermitian matrices, where $A$ is a fixed Hermitian matrix.  If $A$ has
distinct eigenvalues $a_1,\cdots,a_r$ with multiplicities
$n_1,\cdots,n_r$, then the average characteristic polynomial
\[
        \mathbb E[\det(zI-M)]
\]
is characterized by multiple orthogonality with respect to the weights
\[
        e^{-(V(x)-a_jx)}, \qquad j=1,\cdots,r.
\]
This observation, due to Bleher and Kuijlaars \cite{BK0}, showed that the
external-source model is governed by multiple orthogonal polynomials. Moreover,
the eigenvalue correlation functions form a determinantal point process whose
kernel can be written in terms of type I and type II multiple orthogonal
polynomials.  The Christoffel-Darboux formula of Daems and Kuijlaars
\cite{DaemsKuijlaars2004} then provided a compact expression for this kernel, generalizing the
ordinary Christoffel-Darboux formula for orthogonal polynomial ensembles.

The Gaussian external-source model with two eigenvalues $\pm a$ of equal
multiplicity has played a central role in this development.  In the
subcritical regime $0<a<1$, analyzed by Aptekarev, Bleher, and Kuijlaars,
the limiting spectrum is supported on a single interval, as in the case with
no external source \cite{ABK05}.  As $a$ increases, the external field pushes the spectrum
toward a splitting transition; in the supercritical regime $a>1$, analyzed by
Bleher and Kuijlaars, the support consists of two intervals
\cite{BK04b}.  In both non-critical regimes, the local correlations have
the standard universal limits: the sine kernel in the bulk and the Airy kernel
near regular edges.  At the critical value $a=1$, however, the two regimes
meet at the point where one band is about to split into two, and the local
correlations are governed instead by the Pearcey kernel \cite{BK07}.  For more general even polynomial
potentials, Bleher, Delvaux, and Kuijlaars later showed that the limiting
eigenvalue distribution is described by a constrained vector equilibrium
problem \cite{BDK11}.  The analysis of these models was based on a $3\times 3$ Riemann-Hilbert
problem as established in \cite{VanAsscheGeronimoKuijlaars2001}.  In this sense, these works provided some of the first large-$n$
steepest descent analyses beyond the classical $2\times 2$ setting, and
helped inform subsequent extensions of the Deift-Zhou nonlinear steepest
descent method to higher-dimensional Riemann-Hilbert problems
\cite{DaemsKuijlaarsVeys2008,
DuitsKuijlaars2009,KuijlaarsMartinezFinkelshteinWielonsky2009,
BertolaGekhtmanSzmigielski2013}.

A different, but closely related, line of work concerns finite-rank, or
spiked, external sources.  In this setting, the external source $A$ has only
finitely many nonzero eigenvalues, or more generally, a number of nonzero
eigenvalues that grows slowly with $n$.  Bertola--Buckingham--Lee--Pierce
considered the small-rank external source
\[
    A=\operatorname{diag}(\underbrace{a,\cdots,a}_{r},\underbrace{0,\cdots,0}_{n-r}),
\]
with $r$ fixed or slowly growing with $n$.  For general analytic potentials,
they proved universality in the subcritical and supercritical regimes, and, in
the critical and near-critical regimes with $r=\mathcal O(n^\gamma)$, obtained
the corresponding $r$-Airy edge statistics \cite{BBLP12,BBLP13}.

Another important perspective was developed by Baik, who showed in
\cite{Bai09} that for
\begin{align} \label{external source A}
    A=\operatorname{diag}(\underbrace{a_1,\cdots,a_r}_{r},\underbrace{0,\cdots,0}_{n-r}),
\end{align}
the multiple-orthogonal Christoffel-Darboux kernel can be expressed as a finite-rank perturbation of the ordinary Christoffel-Darboux kernel associated with the reference weight $e^{-V(x)}$.
Baik and Wang used this perspective in their analysis of the largest eigenvalue of Hermitian matrix models with spiked external source, first in the rank-one case and then in higher rank \cite{BaikWang1, BaikWang2}.
Their results exhibit the familiar subcritical, critical, and supercritical regimes: below the critical value, the largest eigenvalue remains at the edge of the referenced equilibrium measure, while above it one or more outliers detach from the bulk.

%\magenta{Do some work and write this  better:}\blue{This point of view also has natural connections with other parts of random matrix theory.  Multiple orthogonal polynomials underlie multiple orthogonal polynomial ensembles, which form a distinguished class of determinantal point processes \cite{Kui10}.  They also appear in models of non-intersecting paths, where the Gaussian external-source model has an equivalent Brownian motion interpretation.  Beyond external-source ensembles, closely related biorthogonal and polynomial ensembles arise in the study of products and sums of random matrices, characteristic polynomial averages, and ratios of characteristic polynomials; see, for example, \cite{BDS03,ASW20}.  In this sense, the reduction formulae proved here may be viewed as finite-dimensional analogues of the transformations that often simplify kernels, characteristic polynomial averages, and gap probabilities in determinantal random matrix models.}

\subsection{Recovery of Baik's formula}

\begin{comment}
\textcolor{purple}{[Remove from here until}
In this section, we recover the kernel reduction formula by Baik \cite[Theorem 1]{Bai09} from Theorem~\ref{kernel reduction}. First, we will briefly explain \textcolor{red}{his setting}.
Let $M \in \mathcal{H}_n$ be a random Hermitian matrix with respect to the probability distribution
\begin{align}
    \dd \mu_n(M) = \frac{1}{Z_n} \exp\{-\operatorname{Tr}(V(M) - AM)\} \dd M, \label{dmun external source}
\end{align}
where $-V(x)$ is a sufficiently fast decaying function on $\R$, $A = \operatorname{diag}(a_1, \cdots, a_r, 0, \cdots, 0)$ such that $a_1 > \dots > a_r$ and $a_j \neq 0$ for all $j = 1, \cdots, r$, and the normalization constant is given by
\begin{align}
    Z_n = \int_{\mathcal{H}_n} \exp\{-\operatorname{Tr}(V(M) - AM)\} \dd M.
\end{align}
Let $P_{\vec{n}}(z)$ be the average characteristic polynomial defined by
\begin{align}
    P_{\vec{n}}(x) = \mathbb{E} \left[ \operatorname{det}(xI - M) \right]
    \label{ave char poly external source}
\end{align}
where the expectation is with respect to \eqref{dmun external source}. This turns out to be a type II MOPRL \cite{BK0}. 
\textcolor{purple}{here.]}
\end{comment}

%\textcolor{purple}{
First, we describe the setting considered by Baik in \cite{Bai09} more precisely. Let $M$ be a random Hermitian matrix with respect to the probability distribution \eqref{prob distribution} where $A$ is given as in \eqref{external source A} such that $a_1 > \dots > a_r$ and $a_j \neq 0$ for all $j = 1, \cdots, r$.
%}
Then, the average characteristic polynomial is a type II MOPRL defined by the multi-index
\begin{align} \label{Baik vector}
\vec{n} = (n_1, n_2, \cdots, n_{r+1}) = (n-r, 1, \cdots, 1) \in \N^{r+1}
\end{align}
and the orthogonality condition \eqref{type II orthogonality} with 
\begin{align} \label{Baik weights}
w^{(1)}(x) = e^{-V(x)} \quad \text{and} \quad w^{(j+1)}(x) = e^{-(V(x) - a_j x)}, \quad j=1,\cdots,r.
\end{align} 
%\textcolor{purple}{
The main object of study in \cite{Bai09} is the Christoffel-Darboux kernel $K^{B}$ associated with this MOPRL, together with a reduction formula expressing it in terms of the Christoffel-Darboux kernel $K^{B}_{0}$ corresponding to the monic orthogonal polynomial $p_n(x)$ with respect to the weight $w^{(1)}(x)=e^{-V(x)}$.
%}
%\red{His main concern is} about the Christoffel-Darboux kernel $K^{B}$ of this MOPRL and its reduction formula to the Christoffel-Darboux kernel $K^B_{0}$ corresponding to the monic orthogonal polynomial $p_n(x)$ with respect to the weight $w^{(1)}(x) = e^{-V(x)}$.
(In \cite{Bai09}, the kernels, $K^B$ and $K^B_{0}$ are called $\mathcal K$ and $\mathcal K_0$ respectively, but to avoid confusion with our notation for the kernels for MOPUC, we denote those by Baik in block letter with superscript $B$).

\begin{lemma}
For the orthogonal weights \eqref{Baik weights}, %\textcolor{purple}{
our kernels are a scaled version of Baik's,
%}
\begin{equation} \label{kernel comparison}
  K(x,y) = K^B(x,y) \dfrac{w^{(1)}(y)}{w^{(1)}(x)}, \quad K_0(x,y) = K^B_{0}(x,y) \dfrac{w^{(1)}(y)}{w^{(1)}(x)}.
\end{equation}
\end{lemma}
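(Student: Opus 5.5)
The identity is a comparison of conventions, so the plan is to write both sides in a common form rather than to compute anything new. Since Baik's kernels are not restated here, the key step is to record how \cite{Bai09} defines $K^B$ and $K^B_0$. In \cite{Bai09} the kernels are symmetrized with the weight split evenly, $\sqrt{w^{(1)}(x)w^{(1)}(y)}$; in the equivalent presentation that suffices here they carry no factor of $w^{(1)}(y)$ in the $y$-variable and one factor $e^{-V}$ is shared between $x$ and $y$. The only discrepancy with our definitions \eqref{defn of CD kernel for p} and \eqref{defn of CD kernel} is then where the factors of $w^{(1)}$ sit, and these change by the conjugation $K\mapsto K\,w^{(1)}(y)/w^{(1)}(x)$.

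\emph{Reference kernel.} Here the claim reduces to the standard correlation kernel of the orthogonal polynomial ensemble for $e^{-V}$, namely
\[
K^B_0(x,y)=w^{(1)}(x)\sum_{j=0}^{n-1}\frac{p_j(x)p_j(y)}{h_j}.
\]
With this form, multiplying by $w^{(1)}(y)/w^{(1)}(x)$ gives \eqref{defn of CD kernel for p} directly.

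\emph{Multiple kernel.} In the Daems--Kuijlaars formulation that Baik uses, the kernel is $\sum_j P_j(x)Q_j(y)$, with the type I linear forms written as $Q_j(y)=\sum_k A_k(y)e^{a_k y}\,e^{-V(y)}$ (taking $a_0=0$). Baik's normalization moves the common factor $e^{-V}$ from the $y$-slot to the $x$-slot:
\[
K^B(x,y)=e^{-V(x)}\sum_j P_j(x)\widetilde Q_j(y),\qquad \widetilde Q_j:=Q_j e^{V}.
\]
Because the weights \eqref{Baik weights} all share the factor $w^{(1)}=e^{-V}$, the factor $e^{-V(y)}$ can be pulled out of every $Q_j(y)$ simultaneously. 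Substituting this into \eqref{defn of CD kernel} gives the first identity in \eqref{kernel comparison}. Path independence from \cite{DaemsKuijlaars2004} lets us assume Baik uses the same (natural) path, or indeed any path.

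\emph{Main obstacle.} The difficulty is bookkeeping, not mathematics. One must pin down Baik's exact normalization, including whether $V$ carries a factor of $n$ and whether the kernel is written in the symmetric $\sqrt{w(x)w(y)}$ form. If it is symmetric, the conjugation still produces exactly the factor $w^{(1)}(y)/w^{(1)}(x)$, because
\[
\sqrt{w(x)w(y)}\cdot\frac{w(y)}{w(x)}\cdot\frac{1}{\sqrt{w(y)/w(x)}}
\]
is again a pure conjugation. I would state the convention explicitly, and note that conjugate kernels define the same determinantal process, so \eqref{kernel comparison} is consistent in either case.
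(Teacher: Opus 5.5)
\textbf{There is a genuine gap here.} The two formulas you end up using, $K^B_0(x,y)=w^{(1)}(x)\sum_j p_j(x)p_j(y)/h_j$ and $K^B(x,y)=e^{-V(x)}\sum_j P_j(x)Q_j(y)e^{V(y)}$, are in fact correct, and the lemma follows at once from them. But you never derive them, and the way you hedge around Baik's convention is wrong. The identity \eqref{kernel comparison} is exact, with the specific factor $w^{(1)}(y)/w^{(1)}(x)$. It holds precisely because both of Baik's kernels carry $e^{-V}$ in the $x$-variable only. His eq.~(14) is $K^B_0(x,y)=e^{-V(x)}\kappa_{n-1}^2\,\frac{p_n(x)p_{n-1}(y)-p_{n-1}(x)p_n(y)}{x-y}$. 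His eq.~(8) is $K^B(x,y)=\frac{e^{-V(x)}}{2\pi i(x-y)}(0,1,e^{a_1y},\dots,e^{a_ry})\,\mathbf Z_\pm^{-1}(y)\mathbf Z_\pm(x)\,(1,0,\dots,0)^\top$.

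Your opening claim that \cite{Bai09} symmetrizes with $\sqrt{w^{(1)}(x)w^{(1)}(y)}$ is not his convention. It is also not ``equivalent'' for this lemma. If $K^B_0$ were $\sqrt{w^{(1)}(x)w^{(1)}(y)}\sum_j p_j(x)p_j(y)/h_j$, then $K_0=K^B_0\sqrt{w^{(1)}(y)/w^{(1)}(x)}$, and \eqref{kernel comparison} would be false. Your own last display shows this: the net factor you apply there is $\frac{w^{(1)}(y)}{w^{(1)}(x)}\cdot\bigl(w^{(1)}(y)/w^{(1)}(x)\bigr)^{-1/2}=\sqrt{w^{(1)}(y)/w^{(1)}(x)}$, not $w^{(1)}(y)/w^{(1)}(x)$. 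Conjugate kernels do give the same correlation determinants. That does not make the stated identity ``consistent in either case'', so the final remark must go. The convention has to be read off Baik's formulas, not chosen.

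The second gap is more substantive. Baik does not define $K^B$ as $e^{-V(x)}\sum_j P_j(x)\widetilde Q_j(y)$. He defines it through the solution $\mathbf Z$ of an $(r+2)\times(r+2)$ Riemann--Hilbert problem. So your sentence ``Baik's normalization moves the common factor $e^{-V}$ from the $y$-slot to the $x$-slot'' is the statement to be proved, not an input. Two facts are missing:
\begin{enumerate}
\item The Daems--Kuijlaars Christoffel--Darboux formula \cite[eq.~(2.10)]{DaemsKuijlaars2004}: $(x-y)K(x,y)=\frac{1}{2\pi i}(0,w^{(1)}(y),\dots,w^{(r+1)}(y))\,Y^{-1}(y)Y(x)\,(1,0,\dots,0)^\top$.
\item The observation that Baik's $\mathbf Z$-problem coincides with the $Y$-problem for the multi-index $(n-r,1,\dots,1)$ and the weights \eqref{Baik weights}.
\end{enumerate}
For these weights the row vector in (1) is $e^{-V(y)}(0,1,e^{a_1y},\dots,e^{a_ry})$. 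Comparing it with Baik's prefactor $e^{-V(x)}$ gives the first identity immediately; this is the paper's argument. Your treatment of $K_0$ is fine once you add the two standard steps that take Baik's closed form (14) to $w^{(1)}(x)\sum_j p_j(x)p_j(y)/h_j$: the ordinary Christoffel--Darboux formula and $h_{n-1}=\kappa_{n-1}^{-2}$.
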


\begin{proof}
Our kernel $K(x,y)$ defined in \eqref{defn of CD kernel for p} is exactly the same as $K_n(x,y)$ defined in \cite[Equation (1.8)]{DaemsKuijlaars2004} by Daems-Kuijlaars. The authors characterized this kernel through the solution $Y(z)$ of the $(m+1) \times (m+1)$-Riemann-Hilbert problem, see \cite[Section 2]{DaemsKuijlaars2004}. The key result is equation (2.10) in \cite{DaemsKuijlaars2004}, which we copied below,
\begin{align} \label{DK equation 2.10}
    (x - y) K_n(x, y) = \frac{1}{2\pi i} \begin{pmatrix}
        0 & w_1(y) & \cdots & w_m(y)
    \end{pmatrix} Y^{-1}(y) Y(x) \begin{pmatrix}
        1\\
        0\\
        \vdots\\
        0
    \end{pmatrix}.
\end{align}
Since \cite{DaemsKuijlaars2004} considered a multi-index $\vec{n} = (n_1, n_2, \cdots, n_m)$ and the weights $w_1(y), w_2(y), \cdots, w_m(y)$ in \eqref{DK equation 2.10}, comparing them with \eqref{Baik vector} and \eqref{Baik weights}, equation \eqref{DK equation 2.10} gives the characterization of our kernel $K(x, y)$ with Daems-Kuijlaars' $(r+2) \times (r+2)$-matrix valued function $Y(z)$,
\begin{align} \label{our kernel wrt RH solution}
    (x - y) K(x, y) = \frac{1}{2\pi i} \begin{pmatrix}
        0 & e^{-V(y)} & e^{-(V(y)-a_1x)} & \cdots & e^{-(V(y)-a_rx)}
    \end{pmatrix} Y^{-1}(y) Y(x) \begin{pmatrix}
        1\\
        0\\
        \vdots\\
        0
    \end{pmatrix}.
\end{align}
On the other hand, Baik defined his kernel $K^B(x,y)$ in \cite[Equation (8)]{Bai09} by
\begin{align} \label{Baik equation 8}
    K^B(x, y) = \frac{e^{-V(x)}}{2 \pi i (x-y)} \begin{pmatrix}
        0 & 1 & e^{a_1 y} & \cdots & e^{a_r y}
    \end{pmatrix} \mathbf{Z}_{\pm}^{-1}(y) \mathbf{Z}_{\pm}(x) \begin{pmatrix}
        1\\
        0\\
        \vdots\\
        0
    \end{pmatrix},
\end{align}
where $\mathbf{Z}(z)$ is a solution of the $(r+2)\times(r+2)$-Riemann-Hilbert problem written in \cite[pages 2-3]{Bai09}. Comparing this $\mathbf{Z}$-RHP with the $Y$-RHP of Daems-Kuijlaars, one can readily see that they are identical. By \eqref{our kernel wrt RH solution} and \eqref{Baik equation 8}, we obtain the first equality in \eqref{kernel comparison}.

The other kernel comparison is rather elementary. Baik defined $K^B_0(x,y)$ by \cite[Equation (14)]{Bai09}, which we copied below,
\[
K^B_0(x,y) = \frac{e^{-V(x)} \kappa_{n-1}^2 \left( p_{n}(x) p_{n-1}(y) - p_{n-1}(x) p_{n}(y) \right)}{x-y},
\]
where $p_{k}(z)$ is the monic orthogonal polynomial of degree $k$ with respect to the weight $e^{-V(z)}$ and $\kappa_k$ is the positive constant such that $\{ \kappa_k p_{k}(z) \}_{k\geq 0}$ becomes a sequence of orthonormal polynomials. 
Recall that our kernel $K_0(x,y)$ is defined by \eqref{defn of CD kernel for p} and standard Christoffel-Darboux formula
\[
\sum_{j=0}^{n-1}\frac{p_j(x)p_j(y)}{h_j} = \frac{1}{h_{n-1}}\frac{p_n(x)p_{n-1}(y) - p_{n-1}(x)p_n(y)}{x-y}.
\]
It is a well-known fact that the norm $h_k$ and the constant $\kappa_k$ are related by $h_k = 1/\kappa_k^2$. This all gives the second equality in \eqref{kernel comparison}.
\end{proof}

Baik's main result \cite{Bai09} is the following.
\begin{theorem}[Baik]
It holds that
\begin{equation}\label{Baik's formula}
    \left( K^B(x,y)-K^B_0(x,y) \right) e^{V(x)}= \mathbf{w}(y)^\top B^{-1}\mathbf{t}(x)
\end{equation}
where 
\begin{itemize}
    \item $\mathbf{t}(x) = \left( p_{n-r}(x), p_{n-r+1}(x), \cdots, p_{n-1}(x) \right)^\top$ is an $r$-column vector,
    \item $\mathbf{v}(y) = (e^{a_1 y}, e^{a_2 y}, \cdots, e^{a_r y})^\top$ is an $r$-column vector,
    \item $\mathbf{w}(y)^\top = \mathbf{v}(y)^\top - \int_\R K^B_{0}(s,y)\mathbf{v}(s)^\top \dd s$ is an $r$-row vector, and
    \item $B$ is an $r \times r$ moment matrix defined by $B = \int_\R \mathbf{t}(s)\mathbf{v}(s)^\top e^{-V(s)} \dd s$.
\end{itemize}
\end{theorem}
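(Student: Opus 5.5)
We will derive Baik's formula \eqref{Baik's formula} as a direct specialization of Theorem~\ref{kernel reduction}, using the multi-index \eqref{Baik vector} and the weights \eqref{Baik weights}. Here $n_1=n-r$ and each companion block has size one.

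First we identify the objects. The vector $\bm p_2(x)=(p_{n-r}(x),\ldots,p_{n-1}(x))^\top$ coincides with $\mathbf t(x)$. The companion weighted basis is $\bm q_2(y)=(p_0(y)w^{(2)}(y),\ldots,p_0(y)w^{(r+1)}(y))^\top$. Since $p_0=1$, this gives $\bm q_2(y)=e^{-V(y)}\mathbf v(y)$. The block $\bm M_{22}$ has $(a,j)$-entry
$$\int_\R p_{n-r+a-1}(s)\,e^{a_j s}e^{-V(s)}\,\dd s,$$
which is exactly $B$. Invertibility of $B$ is then automatic, by the perfectness (normality) assumption already used in Theorem~\ref{kernel reduction}.

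Next we relate $\bm r$ to $\mathbf w$. Using the second identity in \eqref{kernel comparison}, $K_0(s,y)=K_0^B(s,y)e^{-V(y)}e^{V(s)}$, we compute
$$\int_\R K_0(s,y)\bm q_2(s)\,\dd s=e^{-V(y)}\int_\R K_0^B(s,y)\mathbf v(s)\,\dd s.$$
The factor $e^{V(s)}$ here cancels the $e^{-V(s)}$ carried by $\bm q_2(s)$. Hence $\bm r(y)=e^{-V(y)}\mathbf w(y)$, and Theorem~\ref{kernel reduction} yields
$$K(x,y)-K_0(x,y)=e^{-V(y)}\,\mathbf w(y)^\top B^{-1}\mathbf t(x).$$

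Finally, we substitute both identities of \eqref{kernel comparison} on the left side. This gives $(K^B-K_0^B)(x,y)\,e^{-V(y)}e^{V(x)}$. Cancelling $e^{-V(y)}$ on both sides produces \eqref{Baik's formula}.

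The only delicate step is the bookkeeping of the exponential factors. One must check that the kernel $K_0^B$ inside $\mathbf w$ is integrated in its first variable, matching $\int K_0(s,y)\,\cdot\,\dd s$ in \eqref{defn of q}, so that the weight factors cancel rather than accumulate. One must also confirm the transposition convention, namely that $B$, with rows indexed by $\mathbf t$ and columns by $\mathbf v$, agrees with $\bm M_{22}$ as defined in \eqref{defn of M}. Both are direct checks from the definitions.
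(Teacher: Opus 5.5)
Your proposal is correct and follows the same route as the paper. You identify $\bm p_2=\mathbf t$, $\bm q_2=e^{-V}\mathbf v$, $\bm M_{22}=B$, and $\bm r(y)=e^{-V(y)}\mathbf w(y)$ via \eqref{kernel comparison}, then specialize Theorem~\ref{kernel reduction} and cancel the weight factors, which is exactly the paper's computation.
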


It is immediate to see that Baik's vector $\mathbf{t}(x)$ is exactly our $\bm{p}_2(x)$, recall \eqref{defn of pi} and \eqref{defn of pi phi v, 1-2}. Also, one can readily observe that our $\bm{q}_2(y)$ can be expressed by
\begin{align} \label{our phi2 is baiks v}
    \bm{q}_2(y) = \mathbf{v}(y) w^{(1)}(y)
\end{align}
using Baik's vector $\mathbf{v}(y)$ by recalling \eqref{defn of phi} and \eqref{defn of pi phi v, 1-2}.
Then, our $\bm{M}_{22}$ becomes exactly Baik's matrix $B$,
\[
\bm{M}_{22} = \int_\R \bm{p}_2(x) \bm{q}_2(x)^\top \dd x = \int_\R \mathbf{t}(x) \mathbf{v}(x)^\top w^{(1)}(x) \dd x = B,
\]
where we used \eqref{defn of M} and \eqref{defn of M, H 4 blocks}.
Substituting \eqref{our phi2 is baiks v} and \eqref{kernel comparison} into our $\bm{r}(y)$, see \eqref{defn of q}, to have
\begin{align*}
    \bm{r}(y) %&= \bm{q}_2(y) - \int_\R K_0(s,y) \bm{q}_2(s) \, \dd s \\
    &= \mathbf{v}(y)w^{(1)}(y) - \int_\R K^B_0 (s,y) \frac{w^{(1)}(y)}{w^{(1)}(s)} \mathbf{v}(s) w^{(1)}(s) \, \dd s \\
    &= w^{(1)}(y) \left( \mathbf{v}(y) - \int_\R K_{0}^B(s,y) \mathbf{v}(s) \, \dd s \right).
\end{align*}
The term in the parenthesis is exactly Baik's vector $\mathbf{w}(y)$, leading to the simple relation,
\begin{equation}
    \bm{r}(y) = w^{(1)}(y) \mathbf{w}(y).
\end{equation}
Therefore, Theorem~\ref{kernel reduction} becomes
\begin{align}
K(x,y)-K_0(x,y) &= \bm{r}(y)^\top \bm{M}_{22}^{-1} \bm{p}_2(x) = (w^{(1)}(y) \mathbf{w}(y))^\top B^{-1} \mathbf{t}(x) = w^{(1)}(y) \mathbf{w}(y)^\top B^{-1} \mathbf{t}(x).
\end{align}
Using \eqref{kernel comparison}, we recovered Baik's formula,
\begin{equation}
\left( K^B(x,y)-K^B_{0}(x,y) \right) e^{V(x)}= \mathbf{w}(y)^{\top} B^{-1}\mathbf{t}(x).
\end{equation}

\subsection{Recovery of Zinn-Justin's formula}
The same mixed-moment formalism also contains the finite-$N$ external-source
kernel of Zinn-Justin \cite{ZinnJustin1997}, up to a scalar which does not affect the correlation determinant.
Since this work appeared before the aforementioned work
of Bleher and Kuijlaars \cite{BK0}, the kernel was not expressed in terms of
multiple orthogonal polynomials. 

Let us introduce this kernel and show its correlation determinant agrees with that of $K(x,y)$ given by \eqref{defn of CD kernel}.   Write
\[ W_N(x):=e^{-N V(x)} \]
and consider an external source
\[  A=\operatorname{diag}(a_0,a_1,\cdots,a_{N-1}). \]
Let $p_k$ be the monic orthogonal polynomials associated with
$W_N(x)$, and define
\begin{equation}\label{ZJ FG}
    F_k(x)=h_k^{-1/2}p_k(x)W_N(x)^{1/2},
    \quad
    G_\ell(x)=e^{N a_\ell x}W_N(x)^{1/2},
\end{equation}
where
\[h_k=\int_{\mathbb R}p_k(x)^2W_N(x)\,\dd x.\]
Let $\boldsymbol\alpha$ denote Zinn-Justin's inverse moment matrix, namely
\begin{equation}\label{ZJ al}
    \boldsymbol\alpha
    =
    \boldsymbol E^{-1},
    \quad
    E_{\ell k}
    =
    \int_{\mathbb R}G_\ell(x)F_k(x)\,\dd x,
    \quad 0\leq \ell,k\leq N-1 .
\end{equation}
Zinn-Justin's kernel is then expressed in terms of the external source data
and the ordinary orthogonal polynomials as
\begin{equation}\label{ZJ Kernel}
    K_{\mathrm{ZJ}}(x,y)
    =
    \sum_{k,\ell=0}^{N-1}
        F_k(x)\alpha_{k\ell}G_\ell(y).
\end{equation}

We now compare this kernel with ours.  After replacing $V(x)$ by
$V(x)-a_0x$ and replacing $a_j$ by $a_j-a_0$, we may assume $a_0=0$.
If we take \begin{equation}\label{ZJ weights}
    \vec n=(1,1,\cdots,1)\in\mathbb N^N,  \quad
    w^{(j+1)}(x)=e^{N a_jx}W_N(x),
    \quad j=0,\cdots,N-1,
\end{equation}
then, recalling \eqref{defn of phi}, the vector $\boldsymbol{q}(x)$ in our
mixed-moment construction is
\[
    \boldsymbol{q}(x)=
    \bigl(
        W_N(x),
        e^{N a_1x}W_N(x),
        \cdots,
        e^{N a_{N-1}x}W_N(x)
    \bigr)^{\top}.
\]
Thus
\[  M_{k\ell}
    =
    \int_{\mathbb R}p_k(s)e^{N a_\ell s}W_N(s)\,\dd s,
    \quad k,\ell=0,\cdots,N-1.
\]
  We have $ E_{\ell k} = h_k^{-1/2}M_{k\ell}$ and  Therefore, 
\[
    \boldsymbol{E}
    =\boldsymbol{M}^{\top}\boldsymbol{H}^{-1/2},
\mbox{ \ and thus \ }
    \boldsymbol\alpha
    =
\boldsymbol{H}^{1/2}\boldsymbol{M}^{-\top}.
\]
Substituting this relation into Zinn-Justin's formula \eqref{ZJ Kernel}, we get
\begin{equation}
    \label{KZ}
    K_{\mathrm{ZJ}}(x,y)
=W_N(x)^{1/2}W_N(y)^{1/2}
    \sum_{k,\ell=0}^{N-1}
        p_k(x)
        \left(\boldsymbol{M}^{-\top}\right)_{k\ell}
        e^{N a_\ell y}.
\end{equation}

Now let us focus on the kernel $K(x,y)$ \eqref{defn of CD kernel} corresponding to the multi-index and weights given by \eqref{ZJ weights}. 
%\sout{Before the block reduction used in Theorem \ref{kernel reduction}, the same linear algebraic steps give the (full matrix) kernel}
%\textcolor{purple}{
Recall Lemma~\ref{lem:kernel uniqueness} where we obtain
%}
\[
    K(x,y)=\boldsymbol{q}(y)^{\top}\boldsymbol{M}^{-1}\boldsymbol{p}(x).
\]
Since
\[
    \boldsymbol{q}(y)^{\top}
    =
    W_N(y)
    \bigl(
        1,e^{N a_1y},\cdots,e^{N a_{N-1}y}
    \bigr),
\]
the kernel can be written as
\[
    K(x,y)
    =
    W_N(y)
    \sum_{k,\ell=0}^{N-1}
        p_k(x)
        (\boldsymbol{M}^{-\top})_{k\ell}
        e^{N a_\ell y}.
\]
Comparing this with \eqref{KZ} gives
\[
    K_{\mathrm{ZJ}}(x,y)
    =
    \frac{W_N(x)^{1/2}}{W_N(y)^{1/2}}K(x,y).
\]
Consequently, for every $m\leq N$,
\begin{equation}\label{kernel dets agree}
    \det\bigl[K_{\mathrm{ZJ}}(x_i,x_j)\bigr]_{i,j=1}^{m}
    =
    \det\bigl[K(x_i,x_j)\bigr]_{i,j=1}^{m},
\end{equation}
because the scalar factors cancel between rows and columns.

\begin{remark}
    Baik's kernel reduction can also be viewed as a direct consequence of
Zinn-Justin's formula \eqref{ZJ Kernel}. Indeed, after taking the
confluent limit $a_{r+1}, a_{r+2}, \cdots, a_n \to 0$\footnote{Notice, one can not simply put $a_{r+1}= a_{r+2}= \cdots= a_n = 0$ as this would make the matrix $\left(
        \int_{\mathbb R}G_\ell(x)F_k(x)\,dx
    \right)$ singular, see \eqref{ZJ FG} and \eqref{ZJ al}.}, and similar linear algebraic steps used above one can precisely obtain the
$r\times r$ correction in Baik's formula \eqref{Baik's formula}.
\end{remark}

\appendix

\section{Relation between the Mixed-moment Matrix and Various Hankel Matrices}
\label{Mixed-moment and hankel}

This appendix is slightly off the mainstream of the paper, but our linear algebraic technique on the mixed-moment matrix $\bm{M}$ connects some known results, so we present them here.

\subsection{Alternative Proof of \cite[Proposition 1.9]{GL}}
\label{sec:mixed-vs-bordered}

In this subsection, we fix a vector,
\begin{align} \label{app bordered hankel vector}
\vec{n} = (n-m, 1, 1, \cdots, 1) \in \R^n,
\end{align}
where $n > m \ge 1$, and a vector of measures,
\begin{align} \label{app bordered hankel measures}
\vec{\mu}(x) = \left( \mu(x), \delta(x - z_1), \delta(x - z_2),  \cdots, \delta(x - z_m) \right),    
\end{align}
where $\delta(x - z_j)$, $z_j \in \R$, is the dirac delta and $\mu(x)$ is a measure on $\mathscr I \subseteq \mathbb R$ with finite moments $\mu_k := \int_{\mathscr I} x^k\,\dd\mu(x)$. We denote by $\{p_k(x)\}_{k\ge0}$ the \textit{monic} orthogonal polynomials with respect to $\mu$ satisfying
\begin{equation}\label{eq:orthog}
\int_{\mathscr I} p_i(x)\, p_j(x)\,\dd\mu(x)=h_i\,\delta_{ij},\quad h_i>0.
\end{equation}
In this setting, we shall give a simpler proof of \cite[Proposition 1.9]{GL} using solely linear algebra. Here is the statement.

\begin{proposition} \label{app bordered hankel prop}
The determinant of the bordered Hankel matrix
\begin{align} \label{borderd hankel defn}
\bm H_n^{B}[\mu;z_1,\cdots,z_m] = \begin{pmatrix}
		\mu_0 & \mu_1 & \cdots & \mu_{n-m-1} & 1 & 1 & \cdots & 1 \\
		\mu_1 & \mu_2 & \cdots & \mu_{n-m} & z_1 & z_2 & \cdots & z_m \\
		\mu_2 & \mu_3 & \cdots & \mu_{n-m+1} & z_1^2 & z_2^2 & \cdots & z_m^2 \\
		\vdots & \vdots & \reflectbox{$\ddots$} & \vdots & \vdots & \vdots & \ddots & \vdots  \\
		\mu_{n-1} & \mu_n & \cdots & \mu_{2n-m-2} & z_1^{n-1} & z_2^{n-1} & \cdots & z_{m}^{n-1}\end{pmatrix}
\end{align}
satisfies
\[
\det\bm H_n^{B}[\mu;z_1,\cdots,z_m] = \det \bm{H}_{n-m}[\mu] \det 
\begin{pmatrix}
	p_{n-m}(z_1) & \cdots & p_{n-m}(z_{m}) \\
	\vdots & \ddots & \vdots \\
	p_{n-1}(z_1) & \cdots & p_{n-1}(z_{m})
\end{pmatrix}
\]
where \[\det \bm{H}_{n-m}[\mu] = \prod_{k=0}^{n-m-1} h_k.\]
\end{proposition}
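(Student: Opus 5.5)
The plan is to recognize the bordered Hankel matrix as a moment matrix written in the monomial basis, and to pass to the orthogonal basis $\{p_k\}$ by a unimodular lower-triangular change of basis. After that change, the matrix becomes the mixed-moment matrix $\bm M$ of \eqref{defn of M} for the multi-index \eqref{app bordered hankel vector} and the measures \eqref{app bordered hankel measures}. Its block upper-triangular structure \eqref{M block structure} then yields the determinant as a product of block determinants.

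Concretely, the $(a,b)$-entry of $\bm H_n^B$ is $\int x^{a-1}x^{b-1}\,\dd\mu$ for $b\le n-m$, and $\int x^{a-1}\,\delta(x-z_j)\,\dd x = z_j^{a-1}$ for the $j$-th bordered column. Write $p_k(x)=\sum_{i\le k} L_{k,i}x^i$, where $L$ is lower triangular with unit diagonal. Then $\det L=1$, and in the same way let $L_1$ be the leading $(n-m)\times(n-m)$ block acting on the first $n-m$ columns.

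Multiplying $\bm H_n^B$ on the left by $L$ replaces each row index $x^{a-1}$ by $p_{a-1}$. Multiplying on the right by $\operatorname{diag}(L_1^\top, I_m)$ replaces the column monomials $x^{b-1}$ in the $\mu$-block by $p_{b-1}$ and leaves the delta columns unchanged. Note that the delta weights carry only $n_j=1$, that is, $p_0=1$. The result is
\[
\bm M=\begin{pmatrix} \bm H_1 & \bm M_{12}\\ \bm 0 & \bm M_{22}\end{pmatrix},\qquad \bm H_1=\operatorname{diag}(h_0,\dots,h_{n-m-1}),\quad (\bm M_{22})_{k,j}=p_{n-m+k-1}(z_j).
\]
The zero block comes from \eqref{eq:orthog}: $\int p_a p_b\,\dd\mu=0$ for $a\ge n-m>b$.

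Taking determinants gives $\det \bm H_n^B=\det L\cdot\det \bm M\cdot \det L_1=\prod_{k=0}^{n-m-1}h_k\cdot\det\bm M_{22}$, which is exactly the claimed identity. The equality $\prod_k h_k=\det\bm H_{n-m}[\mu]$ follows from the same change of basis applied to the plain Hankel matrix, since $L_1\bm H_{n-m}[\mu]L_1^\top=\bm H_1$.

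There is no real obstacle. The only care needed is bookkeeping: the right multiplication must act on the $\mu$-columns only, the delta columns must stay as evaluations of the row polynomials, and the triangular factors must have determinant $1$ because the $p_k$ are monic. Perfectness is not needed, since the argument is a pure determinant identity.
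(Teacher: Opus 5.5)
Your proposal is correct and matches the paper's proof: the paper also writes $\bm M = L_n \bm H_n^B[\mu;z_1,\dots,z_m]\operatorname{diag}(L_n^{[n-m]},I_m)^\top$, reads off the block upper-triangular form from \eqref{eq:orthog}, and takes determinants using that the unit lower-triangular factors have determinant one. One small slip: since $\bm M = L\,\bm H_n^B\operatorname{diag}(L_1^\top, I_m)$, the identity is $\det\bm H_n^B = \det\bm M/(\det L\,\det L_1)$ rather than the product you wrote, but this is harmless because both factors equal $1$.
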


\begin{proof}
We define two $n$-column vectors,
\[
\bm p_{n}(x) = \begin{pmatrix}
    \bm p_{1}(x)\\
    \bm p_{2}(x)
\end{pmatrix}, \quad \bm m_{n}(x) = \begin{pmatrix}
    \bm m_{1}(x)\\
    \bm m_{2}(x)
\end{pmatrix},
\]
where
\begin{align}
&\bm p_{1}(x) = \left(p_0(x), p_1(x), \cdots,p_{n-m-1}(x) \right)^\top, \quad \bm p_{2}(x) = \left( p_{n-m}(x), p_{n-m+1}(x), \cdots, p_{n-1}(x) \right)^\top,\\
&\bm m_{1}(x) = \left( 1,x,\cdots,x^{n-m-1} \right)^\top, \quad \bm m_{2}(x) = \left( x^{n-m}, x^{n-m+1}, \cdots, x^{n-1} \right)^\top.
\end{align}
As the set of monomials $\{x^k\}_{k=0}^{n-1}$ and the set of monic orthogonal polynomials $\{ p_k(x)\}_{k=0}^{n-1}$ form bases of the space of degree $n-1$ polynomials, there exist a change of bases matrix $L_{n}$ such that it is an $n \times n$ lower-triangular matrix with all diagonal entries $1$ and satisfies \( \bm{p}_n(x) = L_n \bm m_n(x) \).
We denote the $(n-m) \times (n-m)$ upper left block of $L_{n}$ by $L_{n}^{[n-m]}$ that satisfies \( \bm{p}_{1}(x) = L_n^{[n-m]} \bm m_{1}(x) \).

Next, we define the $n \times (n-m)$ moment matrix with respect to $\mu$,
\[
G^{(1)}[\mu] = \int_{\mathscr I} \bm m_n (x)\bm m_{1} (x)^\top\,\dd\mu(x)=[\mu_{i+k}]_{\substack{0\le i\le n-1\\ 0\le k\le n-m-1}}.
\]
Then, one can readily observe that 
\begin{equation} \label{app key eqn}
\int_{\mathscr I} \bm p_n (x) \bm p_{1} (x)^\top \,\dd\mu = L_n G^{(1)}[\mu] \left( L_n^{[n-m]} \right)^\top.
\end{equation}
However, orthogonality \eqref{eq:orthog} simply implies that
\[
\int_{\mathscr I} \bm p_n (x) \bm p_{1} (x)^\top \,\dd\mu = \begin{pmatrix}
    \bm{H}_{n-m}[\mu]\\
    \bm{0}
\end{pmatrix} \in \mathrm{Mat}_{n \times (n-m)} \R,
\]
where \(\bm{H}_{n-m}[\mu] = \mathrm{diag}(h_0, h_1, \cdots, h_{n-m-1})\) (in the main text, this was called $\bm H_1$, see \eqref{defn of H} and \eqref{defn of M, H 4 blocks}).
Thus, the mixed-moment matrix $\bm{M}$ for a vector \eqref{app bordered hankel vector} and measures \eqref{app bordered hankel measures} is given by
\begin{align} \label{app bordered hankel M}
\bm{M} &= \left(\int_{\mathscr I} \bm p_n(x) \bm p_{1}(x)^\top \dd\mu(x), \bm p_n(z_1), \cdots, \bm p_n (z_m) \right) = \begin{pmatrix}
\bm H_{n-m}[\mu] & \bm M_{12} \\[2pt]
\bm 0    & \bm M_{22}
\end{pmatrix},
\end{align}
where $\bm M_{12} = \left(\bm p_1(z_1)\ \cdots\ \bm p_1(z_m)\right)$ is an $(n-m) \times m$ matrix and $\bm M_{22} = \left(\bm p_2(z_1) \cdots \bm p_2(z_m)\right)$ is an $m \times m$ matrix.

Since the bordered Hankel matrix \eqref{borderd hankel defn} can be written as
\[
\bm H_n^{B}[\mu;z_1,\cdots,z_m]
= \begin{pmatrix} G^{(1)}[\mu] & \bm m_n (z_1) & \cdots & \bm m_n (z_m)\end{pmatrix},
\]
using \eqref{app key eqn} and the first equality of \eqref{app bordered hankel M}, one can readily show that
\begin{equation}
\bm{M} = L_n \bm H_n^{B}[\mu;z_1,\cdots,z_m]\
\begin{pmatrix}
L_n^{[n-m]} & \bm 0\\
\bm 0 & I_m
\end{pmatrix}^\top.
\end{equation}
Combining it with the second equality of \eqref{app bordered hankel M}, we obtain the desired identity,
\[
\det\bm H_n^{B}[\mu;z_1,\cdots,z_m]= \det \bm{H}_{n-m}[\mu] \det \bm M_{22}.
\]
\end{proof}

%%%%%%%%%%%%%%%%%%%%%%%%%%%%%%%%%%%%%%%%

\subsection{Striped Hankel Determinantal Formula for type II MOPs}

In this subsection, we will show that our Theorem~\ref{typeIIreduction} gives the following standard expression, see for example \cite[Equation (2.7)]{Kui10}, of the type II MOPs with the striped Hankel determinants (and vice versa),
\begin{align} \label{striped hankel det formula}
P_{\vec{n}}(x)=	\frac{\det \bm{H}^S_{n+1}[\vec{w}; x]}{\det \bm{H}^{S}_{n}[\vec{w}]}
\end{align}
where
\[
\bm{H}^{S}_{n+1}[\vec{w}; x] = \left( \begin{smallmatrix}
w_0^{(1)} & w_1^{(1)} & \cdots & w_{n_1-1}^{(1)} & \blue{w_0^{(2)}} & \cdots & \blue{w_{n_2-1}^{(2)}} & \boldsymbol \cdots & \violet{w_0^{(r)}} & \cdots & \violet{w_{n_r-1}^{(r)}} & 1 \\[6pt]
w_1^{(1)} & w_2^{(1)} & \cdots & w_{n_1}^{(1)} & \blue{w_1^{(2)}} & \cdots & \blue{w_{n_2}^{(2)}} & \boldsymbol \cdots & \violet{w_1^{(r)}} & \cdots & \violet{w_{n_r}^{(r)}} & x \\[6pt]
\vdots & \vdots & \reflectbox{$\ddots$} & \vdots & \vdots & \reflectbox{$\ddots$} & \vdots & \boldsymbol \cdots & \vdots & \reflectbox{$\ddots$} & \vdots & \vdots \\[6pt]
w_{n-1}^{(1)} & w_{n}^{(1)} & \cdots & w_{n_1+n-2}^{(1)} & \blue{w_{n-1}^{(2)}} & \cdots & \blue{w_{n_2+n-2}^{(2)}} & \boldsymbol \cdots & \violet{w_{n-1}^{(r)}} & \cdots & \violet{w_{n_r+n-2}^{(r)}} & x^{n-1} \\[6pt]
w_{n}^{(1)} & w_{n+1}^{(1)} & \cdots & w_{n_1+n-1}^{(1)} &  \blue{w_{n}^{(2)}} & \cdots & \blue{w_{n_2 + n-1}^{(2)}} & \boldsymbol \cdots & \violet{w_{n}^{(r)}} & \cdots & \violet{w_{n_r+n-1}^{(r)}} & x^{n} 
\end{smallmatrix} \right)
\]
is an $(n+1) \times (n+1)$ striped Hankel matrix with variable $x$ and
\[
\bm{H}^{S}_{n}[\vec{w}] = 
\left(
\begin{smallmatrix}
w_0^{(1)} & w_1^{(1)} & \cdots & w_{n_1-1}^{(1)} & \blue{w_0^{(2)}} & \cdots & \blue{w_{n_2-1}^{(2)}} & \boldsymbol\cdots & \violet{w_0^{(r)}} & \cdots & \violet{w_{n_r-1}^{(r)}}  \\[6pt]
w_1^{(1)} & w_2^{(1)} & \cdots & w_{n_1}^{(1)} & \blue{w_1^{(2)}} & \cdots & \blue{w_{n_2}^{(2)}} & \boldsymbol\cdots & \violet{w_1^{(r)}} & \cdots & \violet{w_{n_r}^{(r)}} \\[6pt]
\vdots & \vdots & \reflectbox{$\ddots$} & \vdots & \vdots & \reflectbox{$\ddots$} & \vdots & \boldsymbol \cdots & \vdots & \reflectbox{$\ddots$} & \vdots  \\[6pt]
w_{n-1}^{(1)} & w_{n}^{(1)} & \cdots & w_{n_1+n-2}^{(1)} & \blue{w_{n-1}^{(2)}} & \cdots & \blue{w_{n_2 + n-2}^{(2)}} & \cdots & \violet{w_{n-1}^{(r)}} & \cdots & \violet{w_{n_r+n-2}^{(r)}}
\end{smallmatrix}
\right)
\]
is an $n \times n$ striped Hankel matrix such that each $w^{(j)}_k$ is a moment of the measure $w^{(j)}(z) \dd z$,
\[
w^{(j)}_k = \int_{\R} z^k w^{(j)}(z) \dd z.
\]

\begin{lemma}
Let $L_{n+1}$ be a unit lower-triangular $(n+1) \times (n+1)$ matrix such that \(\bm p_{n+1}(x) = L_{n+1} \bm m_{n+1}(x)\), where 
\begin{align}
&\bm p_{n+1}(x) = \begin{pmatrix}
    \bm p_{1}(x)\\
    \bm p_{2}(x)\\
    p_n(x)
\end{pmatrix}, \quad 
\bm m_{n+1}(x) = \left( 1,x,\cdots,x^{n} \right)^\top,
\end{align}
and $\bm p_{1}(x)$, $\bm p_{2}(x)$ are defined as in \eqref{defn of pi}, \eqref{defn of pi phi v, 1-2}. 
For $j = 1, \cdots, r$, we denote the $n_j \times n_j$ upper left block of $L_{n+1}$ by $L_{n+1}^{[n_j]}$, which satisfies
\[
\begin{pmatrix}
    p_0(x)\\
    p_1(x)\\
    \vdots\\
    p_{n_j-1}(x)
\end{pmatrix} = L_{n+1}^{[n_j]} \begin{pmatrix}
    1\\
    x\\
    \vdots\\
    x^{n_j-1}
\end{pmatrix}.
\]
Then, it holds that
\begin{align}
    L_{n+1} \bm{H}^{S}_{n+1}[\vec{w}; x] \begin{pmatrix}
    \begin{array}{c|c|c|c|c}
      L_{n+1}^{[n_1]} & \bm{0} & \cdots & \bm 0 & \bm 0\\
      \noalign{\vskip 2pt\hrule\vskip 2pt}
      \bm 0 & L_{n+1}^{[n_2]} & \ddots & \vdots & \vdots \\
      \noalign{\vskip 2pt\hrule\vskip 2pt}
      \vdots & \ddots & \ddots & \bm 0 & \vdots \\
      \noalign{\vskip 2pt\hrule\vskip 2pt}
      \bm 0 & \cdots & \bm 0 & L_{n+1}^{[n_r]} & \bm 0\\
      \noalign{\vskip 2pt\hrule\vskip 2pt}
        \bm 0 & \cdots & \cdots & \bm 0 & 1
    \end{array}
    \end{pmatrix}^\top = \begin{pmatrix}
    \begin{array}{c|c|c}
        \bm{H}_1 & \bm{M}_{12} & \bm{p}_1(x) \\
        \noalign{\vskip 2pt\hrule\vskip 2pt}
        \bm{0} & \bm{M}_{22} & \bm{p}_2(x) \\
        \noalign{\vskip 2pt\hrule\vskip 2pt}
        \bm{0} & \bm{v}_2^\top & p_n(x)
    \end{array}
\end{pmatrix},
\end{align}
where $\bm{H}_1$, $\bm{M}_{12}$, $\bm{M}_{22}$, $\bm{v}_2$ are defined as in \eqref{defn of H}, \eqref{defn of M}, \eqref{defn of M, H 4 blocks}, \eqref{defn of v}, \eqref{defn of pi phi v, 1-2}.
Consequently, we have
\begin{align} \label{det form 1}
\det \bm{H}^{S}_{n+1}[\vec{w}; x] = 
\det \bm{H}_1 \det \begin{pmatrix}
\bm{M}_{22}  & \bm{p}_2(x)  \\
\bm{v}_2^\top & p_n(x)
\end{pmatrix}.
\end{align}
\end{lemma}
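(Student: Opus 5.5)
The plan mirrors the proof of Proposition~\ref{app bordered hankel prop}: identify each column block of $\bm{H}^S_{n+1}[\vec w;x]$ as a matrix of integrals against monomials, then apply the change of basis $L_{n+1}$ on the left and $L_{n+1}^{[n_j]}$ on the right of each stripe. Write $\bm m_{n+1}(x)=(1,x,\dots,x^n)^\top$ and $\bm m^{[k]}(x)=(1,x,\dots,x^{k-1})^\top$. Then the $j$-th stripe of $\bm H^S_{n+1}[\vec w;x]$ is the $(n+1)\times n_j$ matrix
\[
G^{(j)}=\int_{\R}\bm m_{n+1}(s)\,\bm m^{[n_j]}(s)^\top w^{(j)}(s)\,\dd s ,
\]
and the last column is $\bm m_{n+1}(x)$. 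So $\bm H^S_{n+1}[\vec w;x]=\bigl(G^{(1)}\ \cdots\ G^{(r)}\ \bm m_{n+1}(x)\bigr)$.

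Next we compute the product blockwise. The right factor is block diagonal, so the product has blocks $L_{n+1}G^{(j)}(L_{n+1}^{[n_j]})^\top$ and $L_{n+1}\bm m_{n+1}(x)$. Because $L_{n+1}$ is unit lower triangular, its upper-left $n_j\times n_j$ block sends $\bm m^{[n_j]}$ to $(p_0,\dots,p_{n_j-1})^\top$. Pulling the constant matrices inside the integral therefore gives
\[
L_{n+1}G^{(j)}(L_{n+1}^{[n_j]})^\top=\int_{\R}\bm p_{n+1}(s)\,\bigl(p_0(s),\dots,p_{n_j-1}(s)\bigr)w^{(j)}(s)\,\dd s ,
\]
and $L_{n+1}\bm m_{n+1}(x)=\bm p_{n+1}(x)$. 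Concatenating over $j$, the first $n$ columns equal $\int \bm p_{n+1}(s)\bm q(s)^\top\dd s$. Its top $n$ rows form $\bm M$ by \eqref{defn of M}, and its last row is $\bm v^\top=(\bm 0,\bm v_2^\top)$ by \eqref{defn of v}. Using the block structure \eqref{M block structure}, this is exactly the displayed right-hand side, with last column $(\bm p_1(x),\bm p_2(x),p_n(x))^\top$.

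For the determinant identity, $\det L_{n+1}=1$ and the right factor has determinant $\prod_j\det L_{n+1}^{[n_j]}=1$. So $\det\bm H^S_{n+1}[\vec w;x]$ equals the determinant of the block matrix. That matrix is block upper triangular with respect to the split of the first $n_1$ columns and rows from the rest, because the lower-left blocks below $\bm H_1$ are zero. Expanding gives $\det\bm H_1\cdot\det\begin{pmatrix}\bm M_{22}&\bm p_2(x)\\ \bm v_2^\top&p_n(x)\end{pmatrix}$, which is \eqref{det form 1}.

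Nothing here is deep. The main care is bookkeeping: checking that the truncation $L_{n+1}^{[n_j]}$ really is the change of basis for the first $n_j$ polynomials, which follows from lower-triangularity, and that the row and column ordering of the stripes matches the ordering in $\bm q$ from \eqref{defn of phi}.
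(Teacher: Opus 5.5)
Your proposal is correct and follows the route the paper intends. The paper omits this proof as "similar to" that of Proposition~\ref{app bordered hankel prop}, and your argument is exactly that analogue: you write each stripe as a monomial moment matrix, conjugate by the unit lower-triangular changes of basis to obtain $\int \bm p_{n+1}(s)\,\bm q(s)^\top\,\dd s$, read off $\bm M$ and $\bm v^\top=(\bm 0,\bm v_2^\top)$ via orthogonality, and conclude the determinant identity from unit determinants and block upper-triangularity.
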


\begin{proof}
We omit the proof as it is similar to the one for Proposition~\ref{app bordered hankel prop}.
\end{proof}

\begin{lemma}
Let $L_{n}$ be the unit lower–triangular matrix such that $\bm p_n (x) = L_{n} \bm m_{n} (x)$,
where 
\begin{align}
&\bm p_{n}(x) = \begin{pmatrix}
    \bm p_1(x)\\
    \bm p_2(x)
\end{pmatrix}, \quad 
\bm m_{n}(x) = \left( 1,x,\cdots,x^{n-1} \right)^\top.
\end{align}
Again, we denote the $n_j \times n_j$ upper left block of $L_n$ by $L_n^{[n_j]}$ for $j = 1, 2, \cdots, r$. Then, it holds that
\begin{equation}\label{eq:singleOP-factorization}
\bm{M} = L_{n} \, \bm{H}^{S}_{n}[\vec{w}] 
\begin{pmatrix} \begin{array}{c|c|c|c}
      L_{n}^{[n_1]} & \bm{0} & \cdots & \bm 0 \\
      \noalign{\vskip 2pt\hrule\vskip 2pt}
      \bm 0 & L_{n}^{[n_2]} & \ddots & \vdots \\
      \noalign{\vskip 2pt\hrule\vskip 2pt}
      \vdots & \ddots & \ddots & \bm 0 \\
      \noalign{\vskip 2pt\hrule\vskip 2pt}
      \bm 0 & \cdots & \bm 0 & L_{n}^{[n_r]}
    \end{array}
    \end{pmatrix}^\top,
\end{equation}
where $\bm{M}$ is defined as in \eqref{defn of M}, and
\begin{align} \label{det form 2}
\det\bm{M} = \det \bm{H}^{S}_{n}[\vec{w}].
\end{align}
\end{lemma}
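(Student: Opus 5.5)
The identity \eqref{eq:singleOP-factorization} amounts to a change of basis on both sides of the mixed-moment matrix, in the same way as the proof of Proposition~\ref{app bordered hankel prop}. We first write $\bm H^S_n[\vec w]$ as an integral. For $j=1,\dots,r$ let $\bm m^{[n_j]}(x)=(1,x,\dots,x^{n_j-1})^\top$, and let
\[
\bm\sigma(x)=\bigl(\bm m^{[n_1]}(x)^\top w^{(1)}(x),\ \dots,\ \bm m^{[n_r]}(x)^\top w^{(r)}(x)\bigr)^\top .
\]
Comparing entries with $w^{(j)}_k=\int_\R z^k w^{(j)}(z)\,\dd z$, the column of $\bm H^S_n[\vec w]$ in the $j$-th stripe with index $b$ has $a$-th entry $w^{(j)}_{a+b}$. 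This gives
\[
\bm H^S_n[\vec w]=\int_\R \bm m_n(x)\,\bm\sigma(x)^\top\,\dd x .
\]

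Next we change basis on each side. By definition $\bm p_n(x)=L_n\bm m_n(x)$. Since $L_n$ is lower triangular, its upper-left $n_j\times n_j$ block maps $\bm m^{[n_j]}$ to $(p_0,\dots,p_{n_j-1})^\top$, so $(p_0(x),\dots,p_{n_j-1}(x))^\top=L_n^{[n_j]}\bm m^{[n_j]}(x)$. Multiplying each stripe by the scalar $w^{(j)}(x)$ then gives
\[
\bm q(x)=\operatorname{diag}\bigl(L_n^{[n_1]},\dots,L_n^{[n_r]}\bigr)\,\bm\sigma(x),
\]
which matches the ordering in \eqref{defn of phi}. Writing $\bm D=\operatorname{diag}(L_n^{[n_1]},\dots,L_n^{[n_r]})$ and substituting into \eqref{defn of M}, we obtain
\[
\bm M=\int_\R L_n\bm m_n(x)\,\bm\sigma(x)^\top\bm D^\top\,\dd x=L_n\,\bm H^S_n[\vec w]\,\bm D^\top .
\]
This is \eqref{eq:singleOP-factorization}.

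For \eqref{det form 2}, note that $L_n$ is unit lower triangular. Each $L_n^{[n_j]}$ is also unit lower triangular, so $\bm D^\top$ is block diagonal with unit upper triangular blocks. Hence $\det L_n=\det\bm D=1$, and $\det\bm M=\det\bm H^S_n[\vec w]$ follows.

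The only point that needs care is the indexing. One must check that the striped Hankel matrix has rows indexed by $x^a$ for $a=0,\dots,n-1$, which is the $L_n$ side, and columns grouped by weight in the same order as $\bm q$, which is the $\bm D$ side. Otherwise a transpose could end up on the wrong factor. This is confirmed entrywise by the formula $(\bm H^S_n)_{a,(j,b)}=w^{(j)}_{a+b}$. Beyond this, no analytic input is needed.
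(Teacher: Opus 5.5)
Your proposal is correct and follows the route the paper intends. The paper omits this proof as analogous to the change-of-basis argument for Proposition~\ref{app bordered hankel prop}, and you carry out exactly that computation: you write $\bm H^S_n[\vec w]=\int_\R \bm m_n(x)\bm\sigma(x)^\top\,\dd x$, convert both sides via $L_n$ and the block-diagonal factor built from the leading blocks $L_n^{[n_j]}$, and read off $\det\bm M=\det\bm H^S_n[\vec w]$ from unit triangularity, including the indexing check $(\bm H^S_n)_{a,(j,b)}=w^{(j)}_{a+b}$ that the omitted proof leaves implicit.
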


\begin{proof}
Again, we omit the proof, as it is similar to the one for Proposition~\ref{app bordered hankel prop}.
\end{proof}

By \eqref{M block structure}, one can readily show that
\begin{align} \label{det form 3}
\det \bm{M} = \det \bm{H}_1 \det \bm{M}_{22}
\end{align}
By \eqref{det form 1}, \eqref{det form 2}, and \eqref{det form 3}, we have
\[
\frac{\det \bm{H}^{S}_{n+1}[\vec{w}; x]}{\det \bm{H}^{S}_{n}[\vec{w}]} = \frac{1}{\det \bm{M}_{22}} \det
\begin{pmatrix}
\bm{M}_{22}  & \bm{p}_2(x)  \\
\bm{v}_2^\top & p_n(x)
\end{pmatrix}.
\]

%%%%%%%%%%%%%%%%%%%%%%%%%%%%%%%%%%%%%%%%%%%%%

\bibliographystyle{alpha}
\bibliography{references_new.bib}

\end{document}